\newcommand{\lyxdot}{.}
\def\th@exercise{%
  \normalfont % body font
  \thm@headpunct{:}%
}
\title{Newtheorem and theoremstyle test}
\author{Michael Downes\\updated by Barbara Beeton}
\newtheorem{thm}{Theorem}[section]
\newtheorem{lem}[thm]{Lemma}
\theoremstyle{remark}
\newtheorem*{rmk}{Remark}
\theoremstyle{plain}
\newtheoremstyle{note}% name
  {3pt}%      Space above
  {3pt}%      Space below
  {}%         Body font
  {}%         Indent amount (empty = no indent, \parindent = para indent)
  {\itshape}% Thm head font
  {:}%        Punctuation after thm head
  {.5em}%     Space after thm head: " " = normal interword space;
\theoremstyle{note}
\newtheoremstyle{citing}% name
  {3pt}%      Space above, empty = `usual value'
  {3pt}%      Space below
  {\itshape}% Body font
  {}%         Indent amount (empty = no indent, \parindent = para indent)
  {\bfseries}% Thm head font
  {.}%        Punctuation after thm head
  {.5em}%     Space after thm head: " " = normal interword space;
\theoremstyle{citing}
\newtheoremstyle{break}% name
  {9pt}%      Space above, empty = `usual value'
  {9pt}%      Space below
  {\itshape}% Body font
  {}%         Indent amount (empty = no indent, \parindent = para indent)
  {\bfseries}% Thm head font
  {.}%        Punctuation after thm head
  {\newline}% Space after thm head: \newline = linebreak
  {}%         Thm head spec
\theoremstyle{break}
\theoremstyle{exercise}
\theoremstyle{plain}
\let\lvert=|\let\rvert=|
\begin{document}

\title{Unstable CMC spheres and outlying CMC spheres in AF 3-manifolds}

\author{Shiguang Ma\thanks{School of Mathematical Sciences and LPMC, Nankai University, Tianjin
300071, People\textquoteright s Republic of China, msgdyx8741@nankai.edu.cn.The
author was supported by NSFC grant No.11301284, NSFG grant No. 11571185
and ``Specialized Research Fund for the Doctoral Program of Higher
Education'', No.20120031120028.}}
\maketitle
\begin{abstract}
In this paper, we introduce a non-linear ODE method to construct CMC
surfaces in Riemannian manifolds with symmetry. As an application
we construct unstable CMC spheres and outlying CMC spheres in asymptotically
Schwarzschild manifolds with metrics like $g_{ij}=(1+\frac{1}{l})^{2}\delta_{ij}+O(l^{-2})$.
The existence of unstable CMC spheres tells us that the stability
condition in Qing-Tian's work \cite{Qing-Tian-CMC} can not be removed
generally.

\end{abstract}

\section{Introduction}

Constant mean curvature (CMC) surfaces are a kind of important submanifolds.
In this paper we mainly focus on the CMC spheres in asymptotically
flat (AF) 3-manifolds. First let's state some background works in
this area. 

In 1996, G. Huisken and S.T. Yau proved the existence of a foliation
of stable CMC spheres in asymptotically Schwarzschid manifolds in
\cite{Huisken-Yau}. They also proved, under certain radius condition,
the uniqueness of the stable CMC spheres. In \cite{Qing-Tian-CMC},
J. Qing and G. Tian removed this radius condition, i.e. they proved
that, outside certain compact subset, any stable CMC sphere which
separates the compact part from infinity belongs to those constructed
in \cite{Huisken-Yau}. There are many sequential works. Lan-hsuan
Huang did similar work as Huisken and Yau in general AF manifolds
with RT conditions (those are a series of asymptotically ``odd''
or ``even'' conditions). See \cite{Huang-CMC}. In \cite{NERZ-CMC},
Nerz considered the existence of CMC foliation in AF manifolds without
RT conditions. The uniqueness results of Huang and Nerz also need
radius conditions. In \cite{Shiguang-Ma-CMC}, I proved a uniqueness
result under mild radius condition, which improved Huang's uniqueness
result in \cite{Huang-CMC} in some special case. In \cite{Shiguang-Ma-CMC2},
I removed the radius condition in proving the uniqueness in AF manifolds
with decay rate of the metic to be $-1$. Gang Tian and Andre Neves
have also considered similar existence and uniqueness problems in
asymptotically hyperbolic manifolds in \cite{Neves-Tian1} and \cite{Neves-Tian2}.

However, in all the above works, one has to assume stability of the
CMC surfaces to prove the uniqueness ( In Nerz's work he used an integral
estimate on the mean curvature instead ). It was asked by Professor
Gang Tian that whether one can remove the stability condition in proving
the uniqueness and if the answer is no generally, when we can do this.
In \cite{CMC-Warped-Metric} Simon Brendle prove that in a class of
warped product metrics the only CMC spheres are umbilic. In particular,
in Schwarzschild manifolds with positive mass, the only embedded CMC
surfaces are spheres of symmetry. This theorem requires assumptions
on neither the topology nor the stability of the surfaces. Then it
is natural to consider asymptotically Schwarzschild manifolds with
positive mass. In this paper without loss of generality we consider
asymptotically Schwarzschild manifolds with only one asymptotically
flat end. Suppose $(M,ds^{2})$ is a complete Riemannian manifold.
For a compact subset $K\subset M$, we assume $M\backslash K$ is
diffeomorphic to $\mathbb{R}^{3}\backslash\bar{B}_{1}(0)$ and we
assume $\{x_{i}\}_{i=1}^{3}$ are the standard coordinates of $\mathbb{R}^{3}.$
Denote
\[
l=\sqrt{x_{1}^{2}+x_{2}^{2}+x_{3}^{3}}.
\]
 Note that the Schwarzschild metric with mass $1$ takes the form
\[
g_{ij}^{S}=(1+\frac{1}{2l})^{4}\delta_{ij}
\]
with $\delta_{ij}$ to be the standard Euclidean metric. 

We use the cylindrical coordinates $(x,r,\theta)$ to calculate, namely,
\[
\begin{cases}
x_{1}=x,\\
x_{2}=r\cos\theta,\\
x_{3}=r\sin\theta,
\end{cases}
\]
where $r=\sqrt{x_{2}^{2}+x_{3}^{2}}.$

For $\lambda>0$ and $p\in\mathbb{R}$, let $\phi(s)=\phi_{\lambda,p}(s),s\in[0,+\infty]$
be a smooth monotonic function such that 
\begin{align}
\phi(s) & =\begin{cases}
1, & s\in[0,\lambda],\\
p, & s\in[2\lambda,+\infty],
\end{cases}\label{phi definition}
\end{align}
and 
\begin{equation}
\lambda|\phi'|+\lambda^{2}|\phi''|\leq C|p-1|.\label{phi basic}
\end{equation}

We consider asymptotically Schwarzschild metrics which take the following
form on $\mathbb{R}^{3}\backslash\bar{B}_{1}(0)$
\[
ds^{2}(\partial_{i},\partial_{j})=\delta_{ij}+T_{ij}
\]
where
\begin{equation}
\begin{cases}
T_{rr} & =g_{1}(r,x)=\frac{2}{l}+\frac{1}{l^{2}},\\
T_{\theta\theta} & =g_{2}(r,x)=r^{2}(\frac{2}{l}+\frac{1}{l^{2}}\phi_{\lambda,p}(|\frac{r}{x}|)),\\
T_{xx} & =g_{3}(r,x)=\frac{2}{l}+\frac{1}{l^{2}},\\
T_{rx} & =T_{r\theta}=T_{x\theta}=0.
\end{cases}\label{g1,g2,g3}
\end{equation}
Note that when $|\frac{r}{x}|=+\infty$, it should be understood as
that $x=0.$

Let 
\[
\chi(r,x)=\frac{1+\frac{2}{l}+\frac{1}{l^{2}}\phi_{\lambda,p}(|\frac{r}{x}|)}{1+\frac{2}{l}+\frac{1}{l^{2}}}.
\]
 So we have $r^{2}+T_{\theta\theta}=r^{2}\chi(r,x)(1+g_{1}(r,x)).$

It is obvious that for fixed $\lambda,p$, $ds^{2}$ is a smooth metric
and 
\[
ds^{2}=g^{S}+O(l^{-2}).
\]

\begin{rmk}Note that 
\[
\begin{cases}
x_{2} & =r\cos\theta,\\
x_{3} & =r\sin\theta.
\end{cases}
\]
Assume $x=x_{1}>0.$ Let $ds^{2}-(1+\frac{1}{l})^{2}\delta_{ij}=S_{ij}.$
Then by direct calculation we have 
\[
S_{ij,ij}-S_{ii,jj}=-\frac{\frac{2x\phi'(\frac{r}{x})}{r}+\phi''(\frac{r}{x})}{x^{4}}.
\]
For any $0<\alpha<\beta.$ 
\begin{align*}
 & \int_{\alpha\leq x_{1}\leq\beta}(S_{ij,ij}-S_{ii,jj})dx_{1}dx_{2}dx_{3}\\
= & -2\pi\int_{\alpha}^{\beta}\frac{1}{x^{2}}dx\int_{\lambda}^{2\lambda}(2\phi'(s)+s\phi''(s))ds\\
= & 2\pi(\frac{1}{\beta}-\frac{1}{\alpha})(p-1).
\end{align*}
So when $p>1$, $S_{ij,ij}-S_{ii,jj}$ makes a negative contribution
to the scalar curvature. Whether the scalar curvature of an asymptotically
Schwarzschild manifold is positive is a very delicate thing. When
the scalar curvature is negative, we have more flexibility while when
the scalar curvature is positive, we have more rigidity. This can
also be seen in \cite{OUTLYING-CMC} and \cite{Effective-PMT}.

\end{rmk}

Now we can state the main theorems of this paper.

\begin{thm} \label{thm 1} Suppose $(M\backslash K,ds^{2})$ is an
asymptotically flat end with $ds^{2}(\partial_{i},\partial_{j})=\delta_{ij}+T_{ij}$
with $T_{ij}$ given by (\ref{g1,g2,g3}). Then we can fix $\lambda>0$
small and $p>0$ large such that there is $\delta(\lambda,p)>0$ such
that for $0<H<\delta(\lambda,p)$, there is an embedded CMC sphere
$\Sigma$ which is unstable, has mean curvature $H$ and separates
$K$ from infinity. All the unstable CMC spheres constructed can be
parameterized by its mean curvature $H.$ If we denote the area of
$\Sigma(H)$ by $|\Sigma(H)|$ and $l_{0}=\inf_{x\in\Sigma}|x|$ then
we have 
\begin{align*}
l_{0}=O(H^{-1}),\\
|H^{2}|\Sigma(H)|-48\pi| & \leq C(p)H.
\end{align*}

\end{thm}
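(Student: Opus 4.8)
The construction exploits the rotational symmetry of the metric. The plan is to search for $\Sigma$ among surfaces of revolution about the $x$-axis which are moreover invariant under the reflection $x\mapsto -x$, a symmetry of the metric (\ref{g1,g2,g3}) since both $l$ and $|r/x|$ are preserved by it. Such a $\Sigma$ is encoded by its generating curve $\gamma=(x(s),r(s))$ in the half-plane $\{r\ge 0\}$; writing $\gamma$ in arclength $s$ and letting $\psi$ be the angle of its tangent, the condition that the surface of revolution be smooth with constant mean curvature $H$ becomes a first order nonlinear ODE system for $(x,r,\psi)$ whose coefficients are built out of $g_{1}$, $\chi$ and their first derivatives. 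When the ambient metric is exactly Euclidean (or exactly Schwarzschild) this system carries the classical Delaunay first integral, whose level curves are the cylinders, spheres, unduloids and nodoids; for the metric (\ref{g1,g2,g3}) that quantity is only \emph{almost} conserved, its variation along $\gamma$ being an explicit weighted integral of the metric error, which lives in the conical shell $\{\lambda<|r/x|<2\lambda\}$ where $\phi_{\lambda,p}$ is non-constant.

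First I would rescale by $H$, setting $\tilde x=Hx$; the background then converges in $C^{2}_{\mathrm{loc}}(\mathbb R^{3}\setminus\{0\})$ to the flat metric, with error of size $O(H)$ on the region $\{\tilde l\ge c_{0}\}$ in which $\Sigma$ will lie. The surface being built is, to leading order, a reflection-symmetric chain of three mutually tangent round spheres of radius $2$ strung along the $\tilde x$-axis, the central one being the coordinate sphere $\{\tilde l=2\}$ (which contains the image of $K$). This already accounts for the asymptotics in the statement: the rescaled area is $3\cdot 16\pi+o(1)$, which after keeping track of the $O(H)$ correction to each bump and the negligible neck contributions gives $|H^{2}|\Sigma(H)|-48\pi|\le C(p)H$, while the innermost point of $\Sigma$ is at $\tilde l\approx 2$, i.e.\ $l_{0}=O(H^{-1})$. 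To produce an honest smooth solution for each small $H$ I would set up a shooting argument: start $\gamma$ at the top $(0,r_{c})$ of the central bump with horizontal tangent, integrate it outward in the $+\tilde x$ direction through one neck and on until it meets the axis, and adjust the single parameter $r_{c}$ so that $\gamma$ hits $\{r=0\}$ with vertical tangent after passing exactly one local minimum of $r$. By the reflection symmetry this one condition produces an embedded topological sphere with three bumps, depending smoothly on $H$ by the implicit function theorem. Rephrased through the almost conserved Delaunay quantity, the closing condition becomes a \emph{balancing equation} that forces the net change of that quantity from the central bump to the outer bump (where $\gamma$ must rejoin the spherical orbit) to be exactly cancelled by the shell integral; one solves it by the intermediate value theorem. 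This is precisely where the two parameters enter: $\lambda$ small keeps each bump $O(H)$-close to a round sphere, so that $\gamma$ stays embedded with $r>0$ off its two poles and remains inside the region where (\ref{g1,g2,g3}) is the metric, while $p$ large makes the shell integral large enough \emph{in absolute value} to realize the sign the balancing equation requires. This is exactly the ``flexibility coming from negative scalar curvature'' noted in the Remark, and is what lets a non-round closed CMC surface coexist with the otherwise Brendle-type rigidity of a Schwarzschild background.

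Granting $\Sigma(H)$, instability is comparatively soft. Write the volume-preserving second variation of area as $Q(\varphi)=\int_{\Sigma}\bigl(|\nabla\varphi|^{2}-(|A|^{2}+\Ric(\nu,\nu))\varphi^{2}\bigr)$ over $\varphi$ with $\int_{\Sigma}\varphi=0$. I would take $\varphi$ to depend on $x$ alone, to be constant near each of the three bumps — the constants (say $+1$ on the two outer bumps and $-2$ on the central one) and a smooth interpolation chosen so that $\int_{\Sigma}\varphi=0$ — and to carry out all of its variation gradually over the round bumps, not inside the necks. For a surface of revolution the energy-minimizing such $\varphi$ has Dirichlet energy of order $1/\log(\rho/w)$ across a near-round bump of radius $\rho$ whose poles have width $w$; since the necks must pinch in the rescaled picture (otherwise the rescaled limit would be a smooth non-round closed CMC surface in flat space, which is impossible), $\log(\rho/w)\to\infty$ and $\int_{\Sigma}|\nabla\varphi|^{2}=o(1)$. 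On the other hand $|A|^{2}$ integrates to $8\pi$ over each near-round bump, while $\Ric(\nu,\nu)$ is $O(l^{-3})$ off the thin shell — hence $o(1)$ after integration — and on the shell contributes a term that the choice of $\lambda$ makes small relative to $\int_{\Sigma}|A|^{2}\varphi^{2}$, which is bounded below by a positive constant independent of $H$. Therefore $Q(\varphi)<0$ for all sufficiently small $H$, so $\Sigma(H)$ is unstable.

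I expect the existence step — getting $\gamma$ to close up — to be the real obstacle. One is forced to work near the degenerate configuration of three tangent spheres, the necks pinch as $H\to 0$, and the balancing equation must be solved with estimates uniform down to $H=0$; controlling the perturbed Delaunay quantity through each neck while simultaneously keeping $\gamma$ embedded and making the necks land where the shell can act on them is the delicate part. Reducing the CMC equation to a genuine ODE, rather than carrying out a Lyapunov--Schmidt reduction of the full PDE, is precisely what makes this step tractable, since the balancing then becomes a one-dimensional root-finding problem.
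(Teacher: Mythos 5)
Your existence scheme (meridian ODE, the almost--conserved Delaunay quantity $\tau=-g^{2}+g/\sqrt{1+g'^{2}}$, a one--parameter shooting in the height of the central bump, closing when the curve returns to the axis after exactly one neck) is the same skeleton as the paper's, but the step you wave at -- ``$p$ large makes the shell integral large enough in absolute value to realize the sign the balancing equation requires'' -- is precisely where the argument can fail and where the paper's key idea lives. The drift of the Delaunay quantity is $\frac{d\tau}{dy}=g g'\rho$, and $\rho$ contains terms of size $O(H)$ coming from the $\tfrac{2}{l}$ (Schwarzschild) part of the metric, whereas the $\phi_{\lambda,p}$--dependent shell terms only enter at order $H^{2}$, since they sit on the $l^{-2}$ part of the metric. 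A priori the $O(H)$ drift swamps any fixed--$p$ shell contribution, so no choice of $p$ could steer where the curve closes. What saves the construction is the cancellation at the heart of Lemma \ref{key lemma}: over a full bump the $O(H)$ part of $\rho$ integrates against $gg'$ to only $O(H^{2})$ with constant independent of $p$, because for the exact sphere profile $h$ one has
\[
\int_{y_{4}-1}^{y_{4}+1}\Bigl(\frac{1}{\sqrt{y^{2}+h^{2}}}+\frac{h-yh'}{(y^{2}+h^{2})^{3/2}\sqrt{1+h'^{2}}}\Bigr)hh'\,dy=0,
\]
and the comparison of $g_{a}$ with $h$ through the maps $\Phi_{1},\Phi_{2}$ (which in turn needs the neck estimates) costs only $O(H^{2})$. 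Only then does the $(p-3)H^{2}$ term dominate and force $\tau(y_{5})<\tau(y_{1}),\tau(y_{3})$ by a definite gap, so that as $a$ increases to the critical value the blow--up occurs on the second descending branch; note the paper does not solve an ``exact cancellation'' balancing equation by the intermediate value theorem, it only needs this ordering plus the limit $a\rightarrow a'(H)^{-}$. Without identifying the cancellation your plan is unsupported at its central point. Two smaller omissions: smoothness of the surface at the poles where the curve meets the axis is not automatic and is handled by Lemma \ref{regularity at singular point}, and your implicit--function--theorem claim of smooth dependence on $H$ is neither justified (the closing condition is degenerate at the axis) nor needed for the theorem.

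Your instability argument is a genuinely different route from the paper's and, in outline, a workable one: the test function $(+1,-2,+1)$ on the three bumps with zero mean, Dirichlet energy $O(1/\log(1/H))$ across necks of radius comparable to $C(p)H$, $\int|A|^{2}\approx 8\pi$ per bump, and $o(1)$ Ricci terms would give a negative second variation, though it requires making the neck--pinching rate and the capacity estimate rigorous. The paper instead gets instability for free from the area asymptotics: since $H^{2}|\Sigma(H)|\approx 48\pi$ while Qing--Tian's analysis forces $H^{2}|\Sigma|\approx 16\pi$ for any stable CMC sphere separating $K$ from infinity with $l_{0}$ large, the constructed spheres cannot be stable. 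Your version is self--contained but longer; the paper's is immediate once the area computation (which you also sketch) is done.
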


This theorem answers Tian's question partially. In the case of \cite{Qing-Tian-CMC}
(hence in the case of \cite{Shiguang-Ma-CMC2}), the stability condition
can not be removed. 

In a more recent work \cite{OUTLYING-CMC} of S. Brendle and M. Eichmair,
they constructed outlying CMC spheres which are stable in asymptotically
Schwarzschild manifold with positive mass. By ``outlying'' they
meant that the compact region bounded by $\Sigma$ is disjoint from
$B_{l_{0}(\Sigma)}(0)$. Our approach also works in such a setting. 

\begin{thm}\label{thm 2} There is an asymptotically Schwarzschild
manifold $(M,g)$ with $m=1$ in which we can construct two sequences
of embedded outlying CMC spheres which are parameterized by $\Sigma_{n}^{1},\Sigma_{n}^{2}$
and a sequence of CMC spheres $\Sigma_{n}^{3}$ which separate the
compact part from infinity, such that 
\[
H(\Sigma_{n}^{1})=H(\Sigma_{n}^{2})=H(\Sigma_{n}^{3})
\]
and $\Sigma_{n}^{1},\Sigma_{n}^{3},\Sigma_{n}^{2}$ share a common
symmetry axis and $\Sigma_{n}^{1},\Sigma_{n}^{2}$ are tangent to
$\Sigma_{n}^{3}$ at the two poles of $\Sigma_{n}^{3}$ where $x_{2}=x_{3}=0.$

\end{thm}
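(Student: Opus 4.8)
The plan is to realise all three spheres as surfaces of revolution about the $x_1$-axis and to read the configuration off a single profile-curve ODE. First I would fix the manifold: choose $\lambda$ small and $p$ large so that Theorem~\ref{thm 1} applies, and take for $(M,g)$ a manifold whose asymptotically flat end carries the metric (\ref{g1,g2,g3}) for these $\lambda,p$ (with $K$ chosen so the end is as described); then $m=1$. For a sequence $H_n$ in the admissible interval $(0,\delta(\lambda,p))$, let $\Sigma_n^3:=\Sigma(H_n)$ be the CMC spheres furnished by Theorem~\ref{thm 1}. They are $SO(2)$-invariant about the $x_1$-axis, separate $K$ from infinity, and meet the axis in exactly two poles $(a_-(H_n),0,0)$ and $(a_+(H_n),0,0)$; from the estimates of Theorem~\ref{thm 1} (in particular $l_0=O(H^{-1})$ together with $|\Sigma(H)|=48\pi H^{-2}+O(H^{-1})\to\infty$) these poles lie far out in the end, in a region where $g$ is $C^2$-close to the Euclidean metric but the $\phi$-modification of the Schwarzschild background is still not negligible.

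Next I would build $\Sigma_n^1$ near the right pole. A surface of revolution about the $x_1$-axis is encoded by a profile curve in the closed half-plane $\{(x,r):r\ge 0\}$, smoothness forcing the curve to meet $\{r=0\}$ orthogonally; writing the CMC-$H$ equation for $ds^2$ in the coordinates $(x,r)$ gives the second-order ODE $(\star_H)$ already used for Theorem~\ref{thm 1}, which is singular exactly on $\{r=0\}$. Desingularising $(\star_H)$ at $(a_+(H_n),0)$ produces, besides the branch running to the left (the profile of $\Sigma(H_n)$), a unique branch entering the region $\{x>a_+(H_n)\}$, and I would run a shooting argument to show this right-opening branch must come back to $\{r=0\}$ at a point $(a_+^{(1)}(H_n),0)$ at distance $O(H_n^{-1})$ to the right, thereby closing up to an embedded CMC-$H_n$ sphere $\Sigma_n^1\subset\{x\ge a_+(H_n)\}$. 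The comparison objects are the Euclidean round CMC-$H_n$ sphere tangent to $\{x_1=a_+(H_n)\}$ at $(a_+(H_n),0,0)$ (an inner barrier) and exact Schwarzschild coordinate spheres (used to trap the branch); the crux is that in the transition cone $\lambda<|r/x|<2\lambda$ the $\phi'$-term bends the profile back toward the axis, and one must show this bending overcomes the outward drift of the Schwarzschild background, which by Brendle's theorem \cite{CMC-Warped-Metric} would by itself rule out any non-centred CMC sphere --- so the correction is essential, and this is precisely where $p$ large is used. Since $a_+(H_n)>1$ for $n$ large and $\Sigma_n^1\subset\{x\ge a_+(H_n)\}$, the sphere $\Sigma_n^1$ avoids $\bar B_1(0)$ and does not enclose $K$, hence is outlying; being $C^2$-close to a round Euclidean sphere it is embedded.

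The sphere $\Sigma_n^2\subset\{x\le a_-(H_n)\}$ is produced by the same argument at the left pole, taking the branch of $(\star_H)$ that enters $\{x<a_-(H_n)\}$. By construction $H(\Sigma_n^1)=H(\Sigma_n^2)=H(\Sigma_n^3)=H_n$, and all three are surfaces of revolution about the $x_1$-axis, so they share a symmetry axis. Each of $\Sigma_n^1,\Sigma_n^2$ has a pole that coincides with a pole of $\Sigma_n^3$, and two surfaces of revolution about a common axis sharing a pole automatically have the same tangent plane (orthogonal to the axis) there; thus $\Sigma_n^1$ is tangent to $\Sigma_n^3$ at $(a_+(H_n),0,0)$ and $\Sigma_n^2$ at $(a_-(H_n),0,0)$, the two poles $x_2=x_3=0$ of $\Sigma_n^3$. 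Since $\Sigma_n^1$ opens into $\{x\ge a_+(H_n)\}$ while $\Sigma_n^3$ lies in $\{x\le a_+(H_n)\}$ near its right pole, the tangency at $(a_+(H_n),0,0)$ is the only contact between them; likewise for $\Sigma_n^2$ and $\Sigma_n^3$ at the left pole, and $\Sigma_n^1\cap\Sigma_n^2=\emptyset$ since the two lie near the well-separated poles of $\Sigma_n^3$. This is the asserted configuration.

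The main obstacle is the shooting step of the second paragraph: one has to control the perturbed ODE $(\star_H)$ accurately enough to guarantee that the right- (resp. left-) opening branch genuinely returns to the axis --- rather than escaping to infinity or ceasing to be graphical --- and with estimates uniform as $n$ grows, so that embeddedness and the inclusions $\Sigma_n^1\subset\{x\ge a_+(H_n)\}$, $\Sigma_n^2\subset\{x\le a_-(H_n)\}$ persist along the whole sequence. This requires (i) an a priori confinement of the branch to the nearly-flat region, legitimising the Euclidean comparison, via a bootstrap on $a_+^{(1)}(H_n)-a_+(H_n)$; (ii) a quantitative estimate that the $\phi'$-deflection beats the Schwarzschild drift, which is the technical heart and forces $p$ to be large; and (iii) a matching of the singular behaviour of $(\star_H)$ at the departing and the returning axis points. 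Everything else --- equality of the mean curvatures, the two tangencies, and the outlying property --- is then formal.
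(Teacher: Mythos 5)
There is a genuine gap, and it sits exactly at the step you yourself flag as ``the main obstacle'': the shooting argument at the pole cannot work as set up, because you have no free parameter left to shoot with. Once you fix the metric to be the Theorem~\ref{thm 1} metric (with $\lambda$ small and $p$ large \emph{fixed}), take $\Sigma_n^3=\Sigma(H_n)$, and demand tangency at its pole, every datum of the right-opening branch is determined: the mean curvature is $H_n$ and the initial condition is the unique axis-touching solution at that pole. Whether such a branch closes up again on the axis is governed by the Delaunay parameter $\tau=-g^2+g/\sqrt{1+g'^2}$: closure requires $\tau\to 0$ exactly as the profile descends, while a residual $\tau>0$ produces a neck (the surface continues as an unduloid-type tube, not a sphere) and $\tau<0$ makes the profile nodoidal and non-embedded. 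Since $\frac{d}{dy}\tau=g_ag_a'\rho$, the net drift of $\tau$ over one bubble is of size $\sim(p-3)H^2$ and is \emph{not} zero for the Theorem~\ref{thm 1} metric --- indeed the whole point of Lemma~\ref{key lemma} and (\ref{tau(y3) and tau(y5)}) is that for $p$ large this drift is bounded away from zero by $H^2$, which is precisely why in Theorem~\ref{thm 1} the singularity occurs only at $y_5$ and the neck at $y_1$ stays open. So for that metric the branch you shoot from the pole generically does not return to the axis, and no amount of barrier or bootstrap control can fix an exact codimension-one condition without a tunable parameter.

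The paper's proof supplies exactly the missing parameter and consequently uses a different $\Sigma_n^3$ and a different metric. It reruns Lemma~\ref{key lemma} with $p<0$ to get the reversed inequality $|g_a'(y_5)|<|g_a'(y_1)|$, and then chooses $p=p(H)$ so that $|g_a'(y_5)|=|g_a'(y_1)|$; as $a\to a'(H)^-$ both singularities then develop simultaneously, and the \emph{single connected} profile degenerates (via the analysis of Lemmas~\ref{estimates at singular point} and~\ref{regularity at singular point}) into three smooth tangent CMC spheres: the central one is $\Sigma_n^3$ (a single bubble, not the three-bubble sphere of Theorem~\ref{thm 1}) and the outer two are the outlying $\Sigma_n^1,\Sigma_n^2$; tangency at the poles is automatic because the three spheres arise as one singular limit. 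Finally, because $p(H_n)$ varies with $n$, the manifold of Theorem~\ref{thm 2} is \emph{built} for the purpose: $p$ is taken to be a function of $l$ equal to $p(H_n)$ on widely separated annuli $D(H_n)$, keeping the metric smooth, asymptotically Schwarzschild and of mass $1$. Your proposal omits both of these ideas --- the $p$-tuning that makes the two degenerations simultaneous, and the scale-dependent choice of $p$ in the metric --- and without them the construction of $\Sigma_n^1,\Sigma_n^2$ does not go through.
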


This result is a little weaker than that of \cite{OUTLYING-CMC} in
that we do not discuss the stability of the surfaces. 

The main technique of this paper is direct analysis of a nonlinear
ODE which comes from the study of Delaunay type CMC surfaces. In a
recent work \cite{Delaunay-CMC-Geodesic}, Frank Pacard and I proved
that along any non-degenerate closed embedded geodesic in a 3-manifold,
one can construct constant mean curvature surfaces of Delaunay type
of arbitrarily small size. It was asked by Frank Pacard that if we
can deform the neck size of a Delaunay type CMC surface along a closed
geodesic to develop a singularity (which can be regarded as the boundary
of the moduli of CMC surfaces). If one can do this and prove that
the singularity is removable in certain sense, then one can construct
CMC clusters in Riemannian manifolds. In certain sense, this paper
is also related to this problem. The unstable CMC spheres constructed
are in fact CMC clusters of ``three bubbles'' in asymptotically
Schwarzschild manifold. The author believes that this ODE method can
be used to construct many examples. Also we can use it to treat the
question raised by Pacard in the case that the metric has rotational
symmetry along the geodesic.

The main contributions can be summarized as follows: The meridian
curve of the CMC surfaces of revolution is determined by an ODE of
the following type
\[
\begin{cases}
g_{a}''(y) & -\frac{1}{g_{a}(y)}(1+g'_{a}(y)^{2})+(2+\rho)(1+g_{a}'(y)^{2})^{\frac{3}{2}}=0,\\
g_{a}(0) & =a\in[0.95,1.05],\\
g_{a}'(0) & =0.
\end{cases}
\]
where the expression of $\rho=\rho(H,g_{a},g_{a}',y)$ is given by
(\ref{rho-original}) and $|\rho|\leq CH$ holds, where $H$ is the
mean curvature, which is assumed to be a small positive number. If
we neglect $\rho$, when $a<1$, the solution is the meridian curve
of Delaunay unduloid in $\mathbb{R}^{3}.$ As $a\rightarrow1^{-}$
the solution develops singularities at each integer at the same time.
And the Delaunay surface converges to infinitely many spheres each
one of which meets its two neighbors at its two poles. If we do not
neglect $\rho$, for a fixed $a<1$, when $H$ is very small, the
solution looks similar to $\rho=0$ case. Look at the graph below.
The difference is that the solution is no longer exactly periodic.
When $a$ goes up, the solution $g_{a}(y),y\in[0,5]$ will develop
a singularity. The singularity will develop at $z_{1}$ or $z_{2}$.
To study this, we use the Delaunay parameter function
\[
\tau(g_{a}(y),g_{a}'(y))=-g_{a}^{2}(y)+\frac{g_{a}(y)}{\sqrt{1+g_{a}'(y)^{2}}}.
\]

\includegraphics[scale=0.4]{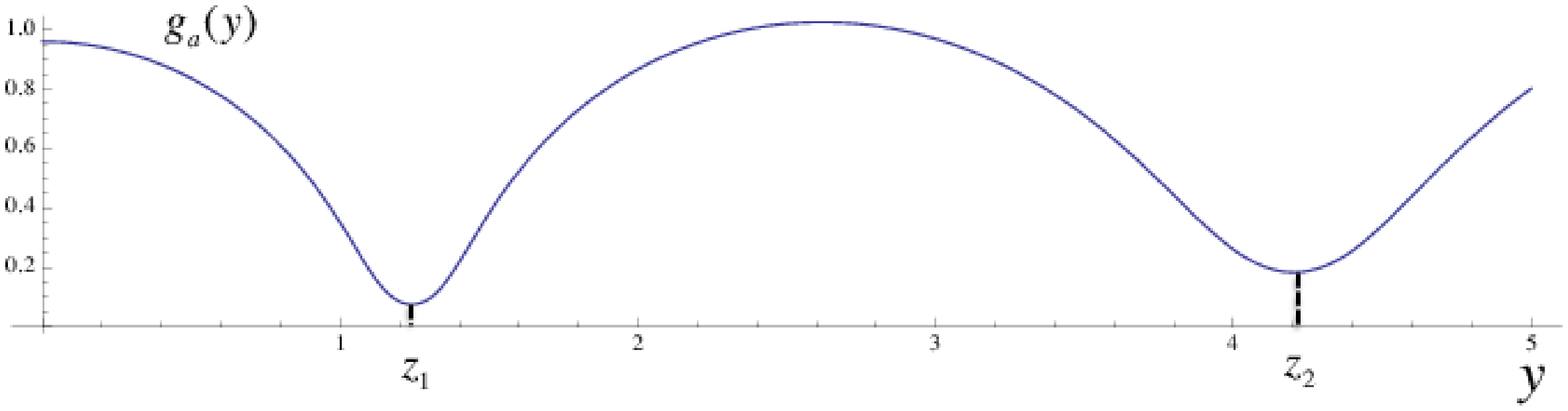}

Before a singularity develops, $\tau(g_{a}(z_{1}),g_{a}'(z_{1})),\tau(g_{a}(z_{2}),g_{a}'(z_{2}))>0.$
When a singularity develops, $g_{a}\rightarrow0,g_{a}'\rightarrow\infty$,
so $\tau\rightarrow0.$ So we will compare $\tau(g_{a}(z_{1}),g_{a}'(z_{1}))$
with $\tau(g_{a}(z_{2}),g_{a}'(z_{2})).$ Note that we have 
\[
\frac{d}{dy}\tau(g_{a}(y),g_{a}'(y))=g_{a}g_{a}'\rho.
\]
So
\[
\tau(g_{a}(z_{2}),g_{a}'(z_{2}))-\tau(g_{a}(z_{1}),g_{a}'(z_{1}))=\int_{z_{1}}^{z_{2}}g_{a}g_{a}'\rho dy.
\]
So we have to study the function $\rho.$ In our case, after some
more calculations and arguments, we know (\ref{rho-original}) is
reduced to
\[
\rho=\frac{H}{d}+\frac{H(g_{a}-yg_{a}')}{d^{3}\sqrt{1+g_{a}'^{2}}}+\frac{H^{2}(\phi-3)(g_{a}-yg_{a}')}{4d^{3}\sqrt{1+g_{a}'^{2}}}-\frac{H^{2}\phi'(y+g_{a}g'_{a})}{8d^{2}y^{2}\sqrt{1+g_{a}'^{2}}}+C(p,\lambda)O(H^{3})
\]
where $\phi$ is given by (\ref{phi definition}). Note that $\phi$
has two parameters $\lambda,p.$ At first glance, $\phi$ only appears
in $H^{2}$ terms, which is not dominant. Here our key observation
is that when $H$ is very small 
\[
\int_{z_{1}}^{z_{2}}g_{a}g_{a}'(\frac{H}{d}+\frac{H(g_{a}-yg_{a}')}{d^{3}\sqrt{1+g_{a}'^{2}}})dy=O(H^{2})
\]
 which is verified in Lemma \ref{key lemma}. So when $|p|$ is very
large, the third term becomes dominant. So we have freedom to choose
the place where the singularity develops, by choosing proper $p$
and $\lambda.$ Despite the singularity, the surface of revolution
is still smooth from Lemma \ref{regularity at singular point}. If
the singularity develops at $z_{2},$ we can get the ``three-bubble''
like CMC sphere by applying symmetric extension. Still, we can choose
$\lambda$ and $p$ properly such that the singularity develops simultaneously
at $z_{1}$ and $z_{2}$. Then we can construct outlying CMC spheres. 

The paper is organized as follows. In Section 2, we derive the ODE
which is satisfied by the meridian curve of a surface of revolution
with constant mean curvature. In Section 3, we analyze this ODE and
prove the existence of the solution. In Section 4, we deform the Delaunay
parameter until the singularity appears. And we can analyze the behavior
of the solution at the singular point. In Section 5, we prove the
two theorems.

\section{The mean curvature}

Suppose a surface of revolution $\Sigma\subset M$ is defined by 
\[
r=f(x)
\]
 where $f(x)$ is a smooth positive function. We are going to calculate
the mean curvature of such a surface.

We assume $(s,\theta)$ to be the coordinate on the surface which
satisfies 
\[
\frac{\partial}{\partial s}=\frac{\partial}{\partial x}+\frac{\partial f}{\partial x}\frac{\partial}{\partial r},\frac{\partial}{\partial\theta}=x_{2}\frac{\partial}{\partial x_{3}}-x_{3}\frac{\partial}{\partial x_{2}}.
\]
We have 
\[
\begin{cases}
g(\partial_{s},\partial_{s}) & =1+f'(x)^{2}+g_{3}+f'(x)^{2}g_{1},\\
g(\partial_{\theta},\partial_{\theta}) & =r^{2}+g_{2}.
\end{cases}
\]
The unit normal vector on $\Sigma$ is 
\[
V=V_{r}\partial_{r}+V_{x}\partial_{x}
\]
where
\[
\begin{cases}
V_{r} & =\frac{1}{\sqrt{1+g_{1}}\sqrt{1+\frac{1+g_{1}}{1+g_{3}}f'^{2}}},\\
V_{x} & =-f'\frac{\sqrt{1+g_{1}}}{(1+g_{3})\sqrt{1+\frac{1+g_{1}}{1+g_{3}}f'^{2}}}.
\end{cases}
\]
We use $V_{e}$ to represent the normal vector in Euclidean metric,
i.e.
\[
V_{e}=\frac{\partial_{r}-f'\partial_{x}}{\sqrt{1+f'^{2}}}.
\]

We use $<\cdot,\cdot>$ to represent the inner product in the metric
$g$ and ``$\cdot$'' to represent the inner product in the Euclidean
metric. The mean curvature 
\[
H(\Sigma)=g^{ss}<\nabla_{\partial_{s}}V,\partial_{s}>+g^{\theta\theta}<\nabla_{\partial_{\theta}}V,\partial_{\theta}>
\]
\begin{align*}
<\nabla_{\partial_{s}}V,\partial_{s}>= & <(\partial_{s}V_{r})\partial_{r},\partial_{s}>+<V_{r}\nabla_{\partial_{s}}\partial_{r},\partial_{s}>\\
 & +<(\partial_{s}V_{x})\partial_{x},\partial_{s}>+<V_{x}\nabla_{\partial_{s}}\partial_{x},\partial_{s}>.
\end{align*}
By direct calculations we have 
\begin{align*}
<(\partial_{s}V_{r})\partial_{r},\partial_{s}> & =(\partial_{s}V_{r})f'(1+g_{1}),\\
<(\partial_{s}V_{x})\partial_{x},\partial_{s}> & =(\partial_{s}V_{x})(1+g_{3}),\\
<V_{r}\nabla_{\partial_{s}}\partial_{r},\partial_{s}> & =\frac{V_{r}}{2}(\partial_{r}g_{xx}+f'^{2}\partial_{r}g_{rr}),\\
<V_{x}\nabla_{\partial_{s}}\partial_{x},\partial_{s}> & =\frac{V_{x}}{2}(\partial_{x}g_{xx}+f'^{2}\partial_{x}g_{rr}),
\end{align*}
 
\begin{align*}
<\nabla_{\partial_{\theta}}V,\partial_{\theta}> & =V_{r}<\nabla_{\partial_{\theta}}\partial_{r},\partial_{\theta}>+V_{x}<\nabla_{\partial_{\theta}}\partial_{x},\partial_{\theta}>\\
 & =\frac{1}{2}(V_{r}\partial_{r}g_{\theta\theta}+V_{x}\partial_{x}g_{\theta\theta}).
\end{align*}
So we get 
\begin{align*}
H(\Sigma) & =(1+f'(x)^{2}+g_{3}+f'(x)^{2}g_{1})^{-1}[(\partial_{s}V_{r})f'(1+g_{1})+(\partial_{s}V_{x})(1+g_{3})\\
 & +\frac{V_{r}}{2}(\partial_{r}g_{xx}+f'^{2}\partial_{r}g_{rr})+\frac{V_{x}}{2}(\partial_{x}g_{xx}+f'^{2}\partial_{x}g_{rr})]\\
 & +(r^{2}+g_{2})^{-1}\frac{1}{2}(V_{r}\partial_{r}g_{\theta\theta}+V_{x}\partial_{x}g_{\theta\theta}).
\end{align*}
From (\ref{g1,g2,g3}), the expression of $H(\Sigma)$ can be reduced
to 
\begin{align*}
H(\Sigma)= & \frac{1}{2}(1+g_{1})^{-\frac{3}{2}}(1+f'^{2})^{-\frac{3}{2}}(-2(1+g_{1})f''\\
 & +(1+f'^{2})(\partial_{r}g_{1}-f'\partial_{x}g_{1})\\
 & +(r^{2}+g_{2})^{-1}(1+g_{1})(1+f'^{2})(2r+\partial_{r}g_{2}-f'\partial_{x}g_{2}))\\
= & -(1+g_{1})^{-\frac{1}{2}}(1+f'^{2})^{-\frac{3}{2}}f''-2(V_{e}\cdot x')(l^{-3}+l^{-4})(1+g_{1})^{-\frac{3}{2}}\\
 & +(1+g_{1})^{-\frac{1}{2}}(1+f'^{2})^{-\frac{1}{2}}f^{-1}+\frac{1}{2}(1+g_{1})^{-\frac{1}{2}}\chi(r,x)^{-1}V_{e}(\chi).
\end{align*}
where $x'=(x,f(x)\cos\theta,f(x)\sin\theta)$ and $V_{e}(\chi)$ is
the derivative of $\chi(r,x)$ in the direction $V_{e}=\frac{\partial_{r}-f'\partial_{x}}{\sqrt{1+f'^{2}}}.$ 

Now we have 
\begin{equation}
\frac{d^{2}f}{dx^{2}}-\frac{1}{f}(1+(\frac{df}{dx})^{2})+(H+\tilde{\rho})(1+(\frac{df}{dx})^{2})^{\frac{3}{2}}=0,\label{Original ODE}
\end{equation}
where 
\begin{align*}
\tilde{\rho} & =\frac{H}{l}+2(V_{e}\cdot x')(l^{-3}+l^{-4})(1+g_{1})^{-1}-\frac{1}{2}\chi^{-1}V_{e}(\chi).
\end{align*}
 Since we want to get constant mean curvature surfaces, we assume
$H$ to be a small positive constant. Denote 
\[
g(y)=\frac{H}{2}f(\frac{2}{H}y)=\frac{H}{2}f(x).
\]
We have 
\[
g'(y)=f_{x}(\frac{2}{H}y),g''(y)=\frac{2}{H}f_{xx}(\frac{2}{H}y),
\]
So we have 
\begin{equation}
g''(y)-\frac{1}{g(y)}(1+g'(y)^{2})+(2+\rho)(1+g'(y)^{2})^{\frac{3}{2}}=0\label{ODE}
\end{equation}
where
\[
\rho=\frac{2}{H}\tilde{\rho}.
\]
We want to study ODE (\ref{ODE}) instead of (\ref{Original ODE}).

\section{The analysis of the ODE}

Consider
\begin{equation}
\begin{cases}
g''(y)-\frac{1}{g(y)}(1+g'(y)^{2})+(2+\rho)(1+g'(y)^{2})^{\frac{3}{2}}=0,\\
g(0)=a,\\
g'(0)=0.
\end{cases}\label{ODE with initial value}
\end{equation}
We assume $a\in[0.9,1.1]$ and denote the solution by $g_{a}(y).$
Let us assume that $H>0$ is very small and the real number $p$ is
fixed. Let $d=\frac{H}{2}l.$ Then we have
\begin{align}
\rho= & \frac{H}{d}+(\frac{H}{2}+\frac{H^{2}}{4d})\frac{g_{a}-yg_{a}'}{d^{3}(1+g_{1})\sqrt{1+g_{a}'^{2}}}\nonumber \\
 & +(\frac{H}{2}+\frac{H^{2}\phi}{4d})\frac{g_{a}-yg_{a}'}{d^{3}(1+\frac{H}{d}+\frac{\phi H^{2}}{4d^{2}})\sqrt{1+g_{a}'^{2}}}\label{rho-original}\\
 & -\frac{H^{2}}{8d^{2}}\frac{\phi'}{(1+\frac{H}{d}+\frac{\phi H^{2}}{4d^{2}})}\frac{y+g_{a}g'_{a}}{y^{2}\sqrt{1+g_{a}'^{2}}}.\nonumber 
\end{align}

In the following analysis, by a constant $C$ ( or $C_{i},i=1,2,\cdots$)
we mean a general (or a particular) uniform positive constant which
does not depend on $H$, $a$,$p,\lambda$ or $y$. We use symbols
$C(H),C(p)$ to denote constants which depend on $H,p,$ etc. 

First we have the local existence. Let $A_{1},A_{2},A_{3},A_{4}$
be four positive constant. Let's denote a domain in the phase space
\[
\{(g,g');A_{2}\leq g\leq A_{1},-A_{4}\leq g'\leq A_{3}\}
\]
 as $D(A_{1},A_{2},A_{3},A_{4}).$ 

\begin{lem}\label{local existence} Suppose $g_{a}(y),y\in[\alpha,\beta],0\leq\alpha\le\beta$
solves (\ref{ODE with initial value}). And for some $A_{i},i=1,\cdots,4$,
$(g_{a},g_{a}')\in D(A_{1},A_{2},A_{3},A_{4}),y\in[\alpha,\beta]$.
Then there is $\delta>0$ such that the solution $g_{a}$ can be extended
to $[\alpha,\beta+\delta).$ 

\end{lem}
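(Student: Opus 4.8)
The statement is a standard local-existence/continuation result for the second-order ODE (\ref{ODE with initial value}), so the plan is to recast it as a first-order system and invoke Picard--Lindel\"of together with the usual continuation principle. First I would set $u = (g, g')$ and write (\ref{ODE with initial value}) as $u' = F(y, u)$, where the second component of $F$ is
\[
F_2(y, g, g') = \frac{1}{g}(1 + g'^2) - (2 + \rho)(1 + g'^2)^{3/2}.
\]
The only subtle point is that $\rho = \rho(H, g, g', y)$ depends on $y$ through the expression (\ref{rho-original}); on the region $y \in [\alpha, \beta]$ with $\alpha \ge 0$ this is only a genuine issue near $y = 0$, because of the factors $1/y^2$ and $1/d^2 = 4/(H^2 l^2)$ with $l$ degenerating, but along an existing solution piece with $(g_a, g_a') \in D(A_1, A_2, A_3, A_4)$ on $[\alpha, \beta]$ one has $l = \sqrt{x^2 + f^2}$ bounded below (indeed $l \to \infty$ is what actually happens, but all we need is $l \ge$ const $> 0$), so $\rho$ is smooth and Lipschitz in $(g, g')$ uniformly for $y$ in a neighborhood of $\beta$.

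Second, I would localize at the endpoint: since $(g_a(\beta), g_a'(\beta)) = (g_a(\beta), g_a'(\beta))$ lies in the \emph{open} set $\{A_2 < g < A_1\} \times \{-A_4 < g' < A_3\}$ — or, if it lies on the boundary, one slightly enlarges the $A_i$ to an open box still avoiding $g = 0$ — the vector field $F$ is $C^1$, hence locally Lipschitz in $u$, on a neighborhood $[\beta - \eta, \beta + \eta] \times B$ of $(\beta, g_a(\beta), g_a'(\beta))$ that stays away from $\{g = 0\}$ and (if $\beta$ is close to $0$) from the bad locus of $\rho$. By the Picard--Lindel\"of theorem applied with initial data $u(\beta) = (g_a(\beta), g_a'(\beta))$, there is a unique solution on $[\beta, \beta + \delta)$ for some $\delta > 0$; by uniqueness it agrees with $g_a$ where both are defined, so it extends $g_a$ to $[\alpha, \beta + \delta)$.

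The main (very mild) obstacle is bookkeeping the regularity of $\rho$: I must check that on the relevant range $\rho$ and its $(g, g')$-derivatives are bounded, which reduces to checking that the denominators $d$, $y$, $1 + \frac{H}{d} + \frac{\phi H^2}{4d^2}$, and $\sqrt{1 + g_a'^2}$ are all bounded away from zero there. The last is automatic; $1 + g_1 > 0$ and $1 + \frac{H}{d} + \cdots$ are bounded below since $H$ is small and $d$ bounded below; $d = \frac{H}{2} l$ is bounded below because on $D(A_1, A_2, A_3, A_4)$ the radius $l \ge |f| = \frac{2}{H} g_a \ge \frac{2 A_2}{H}$, so in fact $d \ge A_2 > 0$ uniformly; and near $y = 0$ the factor $y + g_a g_a'$ in the last term of (\ref{rho-original}) vanishes to first order while $\phi' $ is supported away from its bad region for the relevant parameter choices, so no $1/y^2$ blow-up occurs — alternatively, if $\alpha > 0$ this is a non-issue. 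With these bounds in hand the Lipschitz estimate is routine and the continuation step goes through verbatim.
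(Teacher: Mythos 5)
Your proposal is correct and follows essentially the same route as the paper: rewrite (\ref{ODE with initial value}) as a first-order system, observe that on $D(A_{1},A_{2},A_{3},A_{4})$ one has $d=\sqrt{y^{2}+g_{a}^{2}}\geq A_{2}>0$ so the right-hand side (including $\rho$) is uniformly bounded and Lipschitz in $(g_{a},g_{a}')$, and then apply the standard local existence/continuation theorem at $y=\beta$. Your extra bookkeeping about the $\phi'/y^{2}$ term and the denominators is a finer elaboration of the same point the paper settles by the lower bound on $d$.
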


\begin{proof}Consider a system that is equivalent to (\ref{ODE with initial value})
\[
\begin{cases}
g_{a}' & =w_{a},\\
w_{a}' & =\frac{1}{g_{a}}(1+w_{a}^{2})-(2+\rho)(1+w_{a}^{2})^{\frac{3}{2}}.
\end{cases}
\]
When $(g_{a},w_{a})\in D(A_{1},A_{2},A_{3},A_{4})$ and $y\in[\alpha,\beta]$,
we know $d$ has lower bound $A_{2}$. The right hand side has uniform
bound and uniform Lipschitz constant with respect to $g_{a},w_{a}$.
From the local existence of ODE system, we can prove this lemma.

\end{proof}

Then we have a ``closed'' property for the solution.

\begin{lem}\label{closeness}Suppose $g_{a}(y),y\in[\alpha,\beta),0\leq\alpha<\beta<+\infty$
solves (\ref{ODE with initial value}). And for some $A_{i}>0,i=1,\cdots,4$,
$(g_{a},g_{a}')\in D(A_{1},A_{2},A_{3},A_{4}),y\in[\alpha,\beta)$.
If 
\[
\lim_{y\rightarrow\beta^{-}}g_{a}(y),\lim_{y\rightarrow\beta^{-}}g_{a}'(y)
\]
 exist and are finite, then $g_{a}(y)$ (as a solution to (\ref{ODE with initial value}))
can be extended to $y=\beta.$ In particular, if $g_{a}(y)$ and $g_{a}'(y)$
are both monotonic and bounded, $g_{a}$ can be extended to $y=\beta.$

\end{lem}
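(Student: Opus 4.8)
\textbf{Proof proposal for Lemma~\ref{closeness}.}
The plan is to reduce everything to the first-order system already written down in the proof of Lemma~\ref{local existence}, namely $g_a' = w_a$, $w_a' = \frac{1}{g_a}(1+w_a^2) - (2+\rho)(1+w_a^2)^{3/2}$, and then apply Lemma~\ref{local existence} at the right endpoint. First I would set $g_a(\beta) := \lim_{y\to\beta^-} g_a(y)$ and $w_a(\beta) := \lim_{y\to\beta^-} g_a'(y)$, which exist and are finite by hypothesis. The crucial point is that these limiting values still lie in a domain $D(A_1',A_2',A_3',A_4')$ with $A_2' > 0$: since $(g_a,g_a') \in D(A_1,A_2,A_3,A_4)$ on $[\alpha,\beta)$, the limits satisfy $A_2 \le g_a(\beta) \le A_1$ and $-A_4 \le w_a(\beta) \le A_3$, so in particular $g_a(\beta) \ge A_2 > 0$ and the phase point stays in the same closed box. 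Hence $\rho$ stays bounded up to and including $\beta$.

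Next I would verify that the extended pair $(g_a,w_a)$ on $[\alpha,\beta]$ is genuinely a $C^1$ solution of the system, not merely a continuous extension. Because the right-hand side of the system is continuous (indeed Lipschitz) on $D(A_1,A_2,A_3,A_4)$ and $(g_a,w_a)(y)$ converges to a point of that domain as $y\to\beta^-$, the derivatives $g_a'(y) = w_a(y)$ and $w_a'(y)$ both have finite limits as $y\to\beta^-$; by the mean value theorem this forces $g_a$ and $w_a$ to be differentiable at $\beta$ from the left with the expected one-sided derivatives, so the ODE holds on the closed interval $[\alpha,\beta]$. At this point $g_a$ has been extended to $y=\beta$ as a solution of (\ref{ODE with initial value}), which is the first assertion. (If one also wants the solution to continue past $\beta$, one invokes Lemma~\ref{local existence} with $[\alpha,\beta]$ in place of $[\alpha,\beta)$, but the statement only asks for extension up to $\beta$.)

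For the final sentence, suppose $g_a$ and $g_a'$ are both monotonic and bounded on $[\alpha,\beta)$. A bounded monotone function on a half-open interval has a finite limit at the open endpoint, so $\lim_{y\to\beta^-} g_a(y)$ and $\lim_{y\to\beta^-} g_a'(y)$ exist and are finite; moreover boundedness of $g_a$ away from $0$ is automatic here because $g_a$ solving (\ref{ODE with initial value}) with the structure of the equation keeps $g_a$ positive, and monotonicity plus a positive lower bound on the relevant subinterval supplies the constant $A_2 > 0$. Then the hypotheses of the first part are met and the conclusion follows.

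The main obstacle I anticipate is purely a matter of care rather than depth: one must make sure that the limiting phase point does not escape the domain on which $\rho$ is controlled — in particular that $g_a(\beta)$ does not degenerate to $0$ — since the equation is singular at $g_a = 0$. This is exactly why the hypothesis $(g_a,g_a') \in D(A_1,A_2,A_3,A_4)$ with $A_2>0$ on the whole interval $[\alpha,\beta)$ is needed, and it passes to the closed interval by continuity of the limit. Everything else is a standard ODE continuation argument.
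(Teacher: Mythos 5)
Your argument is correct and is essentially the paper's own proof in expanded form: the paper likewise takes the limiting values of $g_{a}$ and $g_{a}'$ at $\beta$, observes from (\ref{ODE with initial value}) and (\ref{rho-original}) (with $g_{a}\geq A_{2}>0$ keeping $\rho$ under control) that $g_{a}''$ also has a finite limit, and defines the values at $\beta$ accordingly; your mean value theorem step just makes the "defining the values in the obvious way" explicit. The only cosmetic remark is that in the monotone case the positive lower bound on $g_{a}$ is already part of the standing hypothesis $(g_{a},g_{a}')\in D(A_{1},A_{2},A_{3},A_{4})$, so no structural argument about the ODE keeping $g_{a}$ positive is needed there.
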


\begin{proof}If for some $A_{i},i=1,\cdots,4$, $(g_{a},g_{a}')\in D(A_{1},A_{2},A_{3},A_{4}),y\in[\alpha,\beta)$
and 
\[
\lim_{y\rightarrow\beta^{-}}g_{a}(y),\lim_{y\rightarrow\beta^{-}}g_{a}'(y)
\]
 exist, then from (\ref{ODE with initial value}) and (\ref{rho-original})
we have 
\[
\lim_{y\rightarrow\beta^{-}}g_{a}''(y)
\]
 exists. By defining the values of $g_{a},g_{a}',g_{a}''$ on $\beta$
in the obvious way, we can prove this lemma.

\end{proof}

\begin{lem}\label{beta-alpha estimate}Suppose $g_{a}(y)>0,y\in I$
solves (\ref{ODE with initial value}) where $I$ is an interval $[\alpha,\beta]$
or $[\alpha,\beta)$, $0\leq\alpha<\beta$. In the case of $I=[\alpha,\beta)$,
$g_{a}(\beta)$ and $g_{a}'(\beta)$ should be understood as a limit
which is assumed to exist.

Then 
\begin{enumerate}
\item If $g_{a}(y)$ is monotonic in $y$ and $0<C^{-1}<|g_{a}'|<C$ when
$y\in I$, then
\[
C^{-1}|g_{a}(\beta)-g_{a}(\alpha)|\leq|\beta-\alpha|\le C|g_{a}(\beta)-g_{a}(\alpha)|.
\]

\item If $g_{a}'(y)$ is monotonic in $y$ and $k_{a}=g_{a}\sqrt{1+g_{a}'(y)^{2}}$
satisfies that $k_{a}-\frac{1}{2}$ does not change sign and
\[
C^{-1}\leq|k_{a}-\frac{1}{2}|\leq C,y\in I.
\]
And we assume that $|\rho|\leq CH.$ Then when $H$ is small we have
\[
C^{-1}\inf_{y\in I}g_{a}\leq\frac{|\beta-\alpha|}{|\arctan g_{a}'(\beta)-\arctan g_{a}'(\alpha)|}\leq C\sup_{y\in I}g_{a}.
\]

\end{enumerate}
\end{lem}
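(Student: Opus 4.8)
The plan is to handle the two parts separately; in each case the statement reduces to writing a suitable first-order quantity as an integral over $I$ and using the quantitative hypotheses to pinch the integrand between uniform constants.

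For part (1) I would simply use the fundamental theorem of calculus: $g_a(\beta)-g_a(\alpha)=\int_\alpha^\beta g_a'(y)\,dy$. Since $g_a$ is monotonic on $I$, $g_a'$ does not change sign, so $|g_a(\beta)-g_a(\alpha)|=\int_\alpha^\beta|g_a'(y)|\,dy$, and the bound $C^{-1}<|g_a'|<C$ at once gives $C^{-1}|\beta-\alpha|\le|g_a(\beta)-g_a(\alpha)|\le C|\beta-\alpha|$, which rearranges to the desired double inequality. (In the half-open case the endpoint values are read off the assumed limits.)

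For part (2) the key step is to rewrite the ODE (\ref{ODE}) as an evolution equation for $\arctan g_a'$. Since $\frac{d}{dy}\arctan g_a'(y)=\frac{g_a''(y)}{1+g_a'(y)^2}$, dividing (\ref{ODE}) by $1+g_a'^2$ and using $k_a=g_a\sqrt{1+g_a'^2}$ gives
\[
\frac{d}{dy}\arctan g_a'(y)=\frac{1}{g_a}\big(1-(2+\rho)k_a\big)=\frac{1}{g_a}\Big(-2\big(k_a-\tfrac12\big)-\rho\,k_a\Big).
\]
Now the hypotheses enter. Because $C^{-1}\le|k_a-\tfrac12|\le C$ and $k_a>0$, the quantity $k_a$ is itself bounded, so $|\rho\,k_a|\le CH$ by the assumption $|\rho|\le CH$; hence for $H$ small the bracket $-2(k_a-\tfrac12)-\rho k_a$ retains the constant sign of $-(k_a-\tfrac12)$ and is bounded above and below in absolute value by uniform positive constants. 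In particular $\arctan g_a'$ is strictly monotone on $I$ (consistent with the assumed monotonicity of $g_a'$), so
\[
\big|\arctan g_a'(\beta)-\arctan g_a'(\alpha)\big|=\int_\alpha^\beta\frac{\big|2(k_a-\tfrac12)+\rho k_a\big|}{g_a(y)}\,dy ,
\]
and the integrand is comparable to $1/g_a(y)$. Thus $|\arctan g_a'(\beta)-\arctan g_a'(\alpha)|$ is pinched between $C^{-1}\int_\alpha^\beta g_a^{-1}\,dy$ and $C\int_\alpha^\beta g_a^{-1}\,dy$, while $\int_\alpha^\beta g_a^{-1}\,dy$ lies between $|\beta-\alpha|/\sup_{y\in I}g_a$ and $|\beta-\alpha|/\inf_{y\in I}g_a$. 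Chaining these inequalities and rearranging yields precisely $C^{-1}\inf_{y\in I}g_a\le|\beta-\alpha|/|\arctan g_a'(\beta)-\arctan g_a'(\alpha)|\le C\sup_{y\in I}g_a$.

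The step I expect to require the most care is this sign-and-size control of $1-(2+\rho)k_a$: everything rests on the perturbation $\rho k_a$ being genuinely negligible next to the gap $k_a-\tfrac12$, which is exactly where the smallness of $H$ is combined with the two-sided bound on $|k_a-\tfrac12|$ (a one-sided bound would not suffice). One also has to observe that $g_a>0$ on $I$ keeps $1/g_a$ integrable on compact subintervals, so all the integrals above are finite; no further estimates are needed.
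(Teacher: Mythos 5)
Your proposal is correct and follows essentially the same route as the paper: part (1) is the same comparison of $|\beta-\alpha|$ with $|g_a(\beta)-g_a(\alpha)|$ via the two-sided bound on $|g_a'|$, and part (2) uses the ODE to write the derivative of $\arctan g_a'$ as $\frac{1}{g_a}\bigl(1-(2+\rho)k_a\bigr)$ and pinches $|1-(2+\rho)k_a|$ between uniform constants using $C^{-1}\le|k_a-\tfrac12|\le C$ and $|\rho|\le CH$ for small $H$, exactly as in the paper's change-of-variables formula $|\beta-\alpha|=|\int g_a\,dg_a'/((1+g_a'^2)(1-(2+\rho)k_a))|$. The only cosmetic difference is that you integrate in $y$ while the paper integrates in $g_a$ and $g_a'$, which amounts to the same estimate.
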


\begin{proof}We need only prove the case $I=[\alpha,\beta]$. The
case $I=[\alpha,\beta)$ follows by considering $[\alpha,\beta_{i}]$
with $\beta_{i}\rightarrow\beta.$

The first one follows form 
\[
|\beta-\alpha|=|\int_{g_{a}(\alpha)}^{g_{a}(\beta)}\frac{dg_{a}}{g_{a}'}|
\]
and the second one follows from the fact that $\rho$ is small and
\[
|\beta-\alpha|=|\int_{g_{a}'(\alpha)}^{g_{a}'(\beta)}\frac{dg_{a}'}{g_{a}''}|=|\int_{g_{a}'(\alpha)}^{g_{a}'(\beta)}\frac{g_{a}dg_{a}'}{(1+g_{a}'^{2})(1-(2+\rho)k_{a})}|.
\]

\end{proof}

Now we will prove that for the solution of (\ref{ODE with initial value}),
$d=\sqrt{y^{2}+g_{a}^{2}}$ has uniform positive lower bound independent
of $H$ and $a\in[0,9,1,1]$ . This is an important estimate. Only
with this estimate can we know that $|\rho|\leq CH.$

\begin{lem}\label{d lower bound}We can choose $\lambda>0$ small
and $\delta(p,\lambda)>0$ such that when $H<\delta(p,\lambda)$,
for all $a\in[0.9,1.1]$, as long as the solution of (\ref{ODE with initial value})
can be extended when $y>0,$

\[
d=\sqrt{g_{a}^{2}+y^{2}}\geq C>0,
\]
\begin{equation}
|\frac{1}{8d^{2}}\frac{\phi'}{(1+\frac{H}{d}+\frac{\phi H^{2}}{4d^{2}})}\frac{y+gg'}{y^{2}\sqrt{1+g_{a}'^{2}}}|\leq\frac{C|p-1|}{\lambda},\label{phi' estimate}
\end{equation}
\[
|\rho|\leq\hat{C}H.
\]

\end{lem}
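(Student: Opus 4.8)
\noindent\emph{Approach.} The argument is a continuity/bootstrap argument, and its one key point is the trivial inequality $d=\sqrt{y^{2}+g_{a}^{2}}\geq y$: this forces $d$ to be small only where $y$ is small, hence only near the starting point $y=0$, and there $g_{a}$ is close to $a\geq0.9$. Once the lower bound for $d$ is available, (\ref{phi' estimate}) and $|\rho|\le\hat CH$ are obtained by substituting it into (\ref{rho-original}), with all the $p,\lambda$-dependence absorbed into the threshold $\delta(p,\lambda)$.

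\noindent\emph{Step 1: the bound $d\ge C$.} Fix a universal small $\epsilon_{0}>0$ and take $\lambda$ small enough that $2\lambda<1/(2\epsilon_{0})$. On $[0,\epsilon_{0}]$ I run a bootstrap with running hypotheses $g_{a}\ge\tfrac12$ and $|g_{a}'|\le1$. Under these, $d\ge g_{a}\ge\tfrac12$ locally; moreover $g_{a}/y\ge1/(2\epsilon_{0})>2\lambda$, so $\phi'(g_{a}/y)=0$ on $[0,\epsilon_{0}]$, and then every term of (\ref{rho-original}) is $O(H/d)=O(H)$ (the only $p$-dependence being in factors like $\tfrac{\phi H^{2}}{4d}$, which are $\le H$ for $H<\delta(p,\lambda)$), so $|\rho|\le CH$ there. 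Feeding this into (\ref{ODE with initial value}) bounds $|g_{a}''|$ by a universal $M$; since $g_{a}'(0)=0$, $g_{a}(0)=a$, integration gives $|g_{a}'(y)|\le My$ and $|g_{a}(y)-a|\le\tfrac12My^{2}$ on $[0,\epsilon_{0}]$. Shrinking $\epsilon_{0}$ so that $M\epsilon_{0}<1$ and $\tfrac12M\epsilon_{0}^{2}<a-\tfrac12$ (uniformly possible for $a\in[0.9,1.1]$) makes the good set $\{y:g_{a}\ge\tfrac12,|g_{a}'|\le1\text{ on }[0,y]\}$ open as well as nonempty and closed in $[0,\epsilon_{0}]$ (openness via Lemma~\ref{local existence}), so $g_{a}\ge\tfrac12$, hence $d\ge\tfrac12$, on all of $[0,\epsilon_{0}]$. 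For $y\ge\epsilon_{0}$, wherever the solution is defined, $d\ge y\ge\epsilon_{0}$. Therefore $d\ge C:=\min(\tfrac12,\epsilon_{0})>0$ throughout the interval of existence, uniformly in $a\in[0.9,1.1]$ and in $H<\delta(p,\lambda)$.

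\noindent\emph{Step 2: the two remaining estimates.} For (\ref{phi' estimate}): $\phi'(g_{a}/y)\ne0$ forces $g_{a}/y\in[\lambda,2\lambda]$, hence $g_{a}\le2\lambda y$ and $d^{2}\le(1+4\lambda^{2})y^{2}$, so on the support of $\phi'$ one has $y\ge C/\sqrt{1+4\lambda^{2}}$. Combining this with $\lambda|\phi'|\le C|p-1|$ from (\ref{phi basic}), with $|y+g_{a}g_{a}'|/\sqrt{1+g_{a}'^{2}}\le y+g_{a}\le(1+2\lambda)y$, and with $d\ge C$, the factor $1/y^{2}$ is absorbed and the coefficient is bounded by $C|p-1|/\lambda$; it is exactly here that $\lambda$ small matters, as it keeps the support of $\phi'$ — and with it the dangerous $1/y^{2}$ — away from $y=0$. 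For $|\rho|\le\hat CH$: plug $d\ge C$ into (\ref{rho-original}); the first term is $\le H/C$; in the two middle terms $|g_{a}-yg_{a}'|/(d^{3}\sqrt{1+g_{a}'^{2}})\le(|g_{a}|+|y|)/d^{3}\le2/d^{2}\le2/C^{2}$ (using $d\ge g_{a}$ and $d\ge y$ separately), so they are $O(H)$, the denominators $1+\tfrac Hd+\tfrac{\phi H^{2}}{4d^{2}}$ staying $\ge\tfrac12$ for $H$ below a $p,\lambda$-dependent threshold; and the last term is $H^{2}$ times the coefficient just bounded, hence $\le C|p-1|\lambda^{-1}H^{2}$. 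Taking $\delta(p,\lambda)$ small enough that these $p,\lambda$-dependent pieces are all $\le H$ for $H<\delta(p,\lambda)$ gives $|\rho|\le\hat CH$ with $\hat C$ universal.

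\noindent\emph{Main obstacle.} The delicate point is the circularity: $|\rho|\le CH$ is what keeps the solution in a controlled region, while a bound on the solution ($d\ge C$) is what makes $\rho$ small — so Step 1's bootstrap must be arranged so that its own running hypotheses supply the local lower bound on $d$ needed to estimate $\rho$. The secondary subtlety is the $1/y^{2}$ factor multiplying $\phi'$ in (\ref{rho-original}), which looks singular at $y=0$ but is harmless because $\phi'$ is supported where $g_{a}/y\sim\lambda$, so with $\lambda$ small and $d$ bounded below that region lies in $\{y\ge\text{const}>0\}$.
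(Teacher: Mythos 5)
Your proof is correct, and it takes a genuinely different (and more elementary) route than the paper. The paper proves the lower bound on $d$ by a phase--plane comparison with the Delaunay family $D_a$: it builds a trapping region $R_1$ bounded by $\Gamma_1,\dots,\Gamma_4$, uses the near--conservation of the Delaunay parameter ($\frac{d\tau}{dy}=g_ag_a'\rho$) to rule out exit through $\Gamma_1,\Gamma_3$, invokes monotonicity together with Lemma \ref{beta-alpha estimate} to show the first exit time $y'$ satisfies $C^{-1}\le y'\le C$, and only then gets $d\ge\min\{C^{-1},C_1,\delta_0\}$ and places the support of $\phi'$ in $\{y\ge C^{-1}\}$ by choosing $2\lambda\le C^{-1}C_1$. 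You bypass all of this with the observation $d\ge y$, which makes the bound trivial for $y\ge\epsilon_0$, and handle $[0,\epsilon_0]$ by a short-time bootstrap ($g_a\ge\tfrac12$, $|g_a'|\le1$, hence $|g_a''|\le M$ universal, hence the bounds persist), with the $\phi'$ term vanishing there automatically because $g_a/y\ge 1/(2\epsilon_0)>2\lambda$; you then locate the support of $\phi'$ directly from $g_a\le 2\lambda y$ and $d\ge C$, rather than from the exit-time analysis. Your quantifier order ($\epsilon_0$ universal, then $\lambda$ small, then $\delta(p,\lambda)$), the uniformity of the constants in $a,H,p,\lambda$, and the resolution of the $\rho$--versus--$d$ circularity are all handled correctly, so the three conclusions of the lemma follow with the same strength. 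What the paper's heavier argument buys, and yours does not, is the auxiliary phase--plane information recorded inside that proof (the orbit staying near the Delaunay orbit, the monotonicity of $g_a,g_a'$ up to $y'$, the bound $k_a\ge\tfrac23$ there, and displayed identities such as (\ref{g'' and k})) which the paper reuses in later lemmas; as a proof of the lemma as stated, however, your shortcut is complete.
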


\begin{proof}

First let us pretend that $\phi=p$ holds everywhere. So $\phi'=0$
and 
\begin{align*}
\rho= & \frac{H}{d}+\frac{H}{2}\frac{g_{a}-yg_{a}'}{d^{3}(1+g_{1})\sqrt{1+g_{a}'^{2}}}+\frac{H^{2}}{4}\frac{g_{a}-yg_{a}'}{d^{4}(1+g_{1})\sqrt{1+g_{a}'^{2}}}\\
 & +\frac{H}{2}\frac{g_{a}-yg_{a}'}{d^{3}(1+\frac{H}{d}+\frac{\phi H^{2}}{4d^{2}})\sqrt{1+g_{a}'^{2}}}+\frac{H^{2}}{4}\frac{p(g_{a}-yg_{a}')}{d^{4}(1+\frac{H}{d}+\frac{\phi H^{2}}{4d^{2}})\sqrt{1+g_{a}'^{2}}}.
\end{align*}

Let $D_{a}(y)$ solves
\begin{equation}
\begin{cases}
D''(y)-\frac{1}{D(y)}(1+D'(y)^{2})+2(1+D'(y)^{2})^{\frac{3}{2}}=0,\\
D(0)=a,\\
D'(0)=0,
\end{cases}\label{Delaunay ODE}
\end{equation}
where $a\in[0.8,1.2]$. 

We state some facts of $D_{a}(y)$ without proof. There are $\delta_{0},C_{1},C_{2},C_{3}>0$
such that, 
\begin{enumerate}
\item For each $a\in[0.8,1.2],$ the solution of (\ref{Delaunay ODE}) exists
on $[0,\delta_{0}]$.
\item $R_{1}=\{(D_{a}(y),\frac{d}{dy}D_{a}(y));a\in[0.8,1.2],y\in[0,\delta_{0}]\}$
is trapped in the region 
\[
R_{2}=\{(\xi,\eta);C_{1}\leq\xi\le1.2,-C_{2}\leq\eta\leq0\}.
\]

\item When $a\in[0.8,1.2]$, the solution $D(y),y\in[0,\delta_{0}]$ satisfies
$D\sqrt{1+D'^{2}}\geq\frac{2}{3}.$
\item For each $a\in[0.8,1.2]$, $D_{a}(0)-D_{a}(\delta_{0})>C_{3}$ and
$D_{a}'(\delta_{0})<-C_{3}$.
\end{enumerate}
We may draw a graph in the phase space to illustrate this. In the
graph below, $\delta_{0}=0.75.$ We can choose $C_{1}=0.5,C_{2}=2.4.$
$D_{a}(0)-D_{a}(\delta_{0})$ and $-D'_{a}(\delta_{0})$ are positive
continuous functions of $a\in[0,8,1.2]$. So $C_{3}$ can be chosen
properly small and positive. 

\includegraphics[scale=0.4]{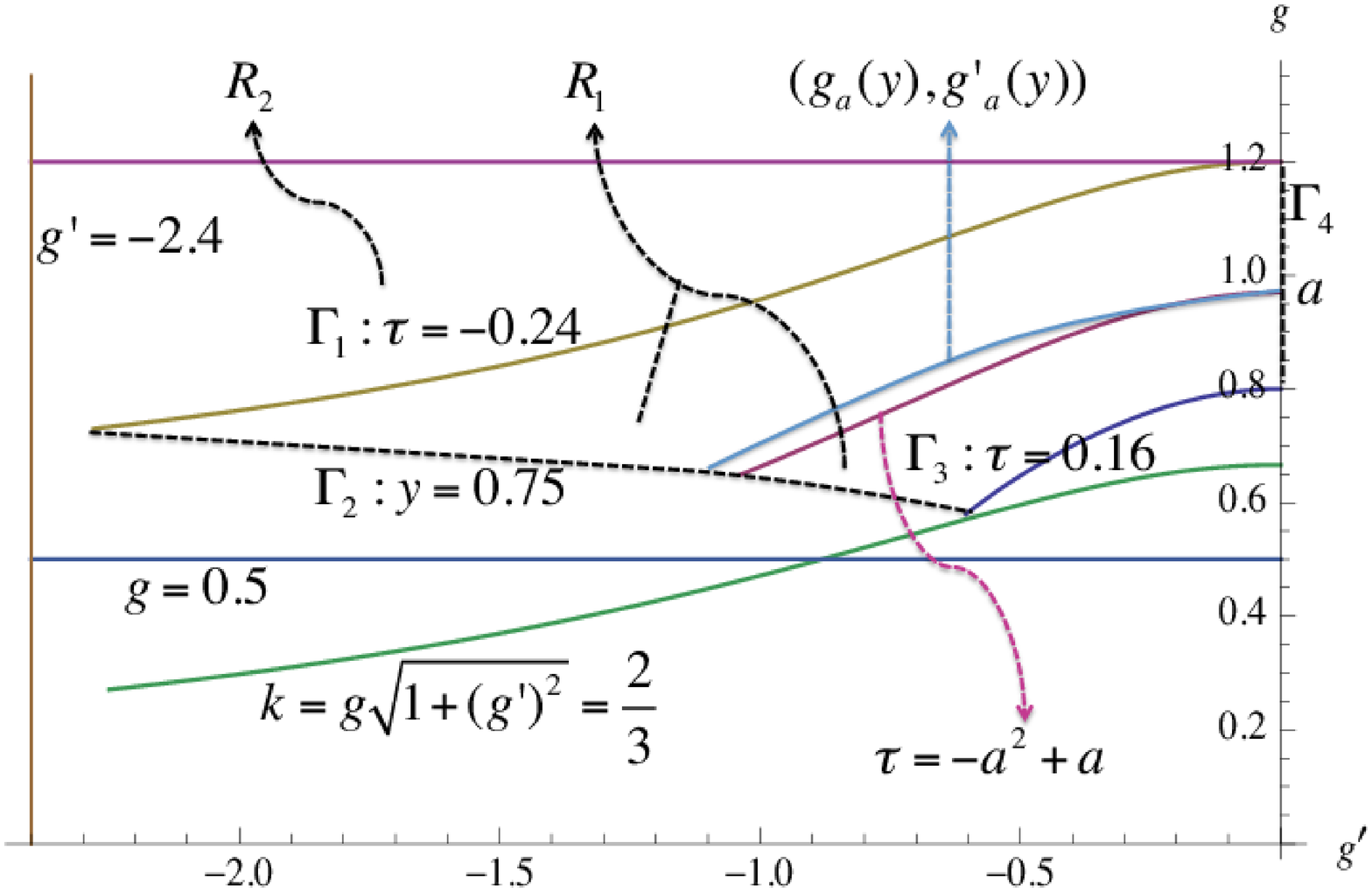}

Let $R_{1}$ be the closure of a bounded domain whose boundary contains
4 components, $\Gamma_{1},\Gamma_{2},\Gamma_{3},\Gamma_{4}.$
\begin{align*}
\Gamma_{1} & =\{(D_{a}(y),\frac{d}{dy}D_{a}(y));a=1.2,y\in[0,\delta_{0}]\},\\
\Gamma_{2} & =\{(D_{a}(y),\frac{d}{dy}D_{a}(y));a\in[0.8,1.2],y=\delta_{0}\},\\
\Gamma_{3} & =\{(D_{a}(y),\frac{d}{dy}D_{a}(y));a=0.8,y\in[0,\delta_{0}]\},\\
\Gamma_{4} & =\{(D_{a}(y),\frac{d}{dy}D_{a}(y));a\in[0.8,1.2],y=0\}.
\end{align*}
Now we consider the orbit $(g_{a},g_{a}')$ of (\ref{ODE with initial value}).
For each $a\in[0,9,1.1]$, $(g_{a}(0),g_{a}'(0))\in\Gamma_{4}.$ From
Lemma \ref{local existence}, the solution of (\ref{ODE with initial value})
can be extended a little to $y>0.$ Then the orbit will be extended
into the interior of $R_{1}.$ As long as the orbit keeps in $R_{1}$,
we know $d\geq C_{1}$. So there are $\delta(p)>0,C>0$ such that
when $0<H<\delta(p)$, 
\begin{equation}
|\rho|\leq CH.\label{rho estimate}
\end{equation}

If $\{(g_{a}(y),g_{a}'(y));y\in[0,\delta_{0}]\}\subset R_{1}$, we
have $d=\sqrt{y^{2}+d^{2}}\geq\min\{\delta_{0},C_{1}\}$ as long as
the solution exists. If it is not the case then we assume $y'\in(0,\delta_{0})$
is the first time $(g_{a},g_{a}')$ touches $\partial R_{1}$. We
claim that for $H$ sufficiently small, $(g_{a}(y'),g_{a}'(y'))\in\Gamma_{2}.$
For this we consider the Delaunay parameter function
\[
\tau(g_{a},g_{a}')=-g_{a}^{2}+\frac{g_{a}}{\sqrt{1+g_{a}'^{2}}}.
\]
 One can regard it as the first integral of (\ref{Delaunay ODE}).
When $y\in[0,y']$ 
\begin{equation}
\frac{d}{dy}\tau=g_{a}g'_{a}\rho.\label{tau-ODE}
\end{equation}
Note that $|g_{a}g_{a}'|\leq1.2C_{2}$. From (\ref{rho estimate}),
by choosing $H$ even smaller, we have for each $a\in[0,9,1.1]$,
as long as $(g_{a},g_{a}')\in R_{1},$ $(g_{a},g_{a}')$ keeps in
a small neighborhood of $(D_{a},D_{a}').$ So $(g_{a},g'_{a})$ has
no chance to touch $\Gamma_{1}$ or $\Gamma_{3}$. From
\begin{equation}
g_{a}\sqrt{1+g_{a}'^{2}}\geq\frac{2}{3}\label{k>2/3}
\end{equation}
and the fact that $\rho$ is small, we can deduce
\begin{equation}
g_{a}''(y)=\frac{1}{g_{a}(y)}(1+g_{a}'(y)^{2})(1-(2+\rho)g_{a}(y)(1+g_{a}'(y)^{2})^{\frac{1}{2}})\label{g'' and k}
\end{equation}
 is always negative, which implies both $g_{a}$ and $g_{a}'$ are
monotonically decreasing when $y\in[0,y']$ . From Lemma \ref{beta-alpha estimate},
we have 
\[
C(C_{1},C_{2},C_{3})^{-1}\leq y'\leq C(C_{1},C_{2},C_{3}).
\]
So if we choose $2\lambda\leq C(C_{1},C_{2},C_{3})^{-1}C_{1}$ and
define $\phi$ as (\ref{phi definition}), it will have no influence
on the analysis above. And we know 
\begin{align*}
d & \geq\min\{C(C_{1},C_{2},C_{3})^{-1},C_{1},\delta_{0}\}.
\end{align*}
Note that when $\phi'\neq0,$ there hold $y\geq C(C_{1},C_{2},C_{3})^{-1}$
and $|\frac{g}{y}|\leq2\lambda$. So from (\ref{phi basic}) 
\begin{align*}
 & |\frac{1}{8d^{2}}\frac{\phi'}{(1+\frac{H}{d}+\frac{\phi H^{2}}{4d^{2}})}(1+g'^{2})^{-\frac{1}{2}}\frac{y+gg'}{y^{2}}|\\
\leq & \frac{C|p-1|}{\lambda}.
\end{align*}
So from 
\[
\frac{|g_{a}-yg_{a}'|}{\sqrt{1+g_{a}'^{2}}\sqrt{y^{2}+g_{a}^{2}}}\leq1
\]
we can choose $\delta(p,\lambda)$ small such that $|\rho|\leq\hat{C}H.$

\end{proof}

Now we can expand $\rho$ in terms of $H.$ For $0<H<\delta(p,\lambda)$,
\[
\rho=\frac{H}{d}+\frac{H(g_{a}-yg_{a}')}{d^{3}\sqrt{1+g_{a}'^{2}}}+\frac{H^{2}(\phi-3)(g_{a}-yg_{a}')}{4d^{3}\sqrt{1+g_{a}'^{2}}}-\frac{H^{2}\phi'(y+g_{a}g'_{a})}{8d^{2}y^{2}\sqrt{1+g_{a}'^{2}}}+C(p,\lambda)O(H^{3}).
\]
From now on we will always assume that $\delta(p,\lambda)$ is so
small that $|\rho|\leq\hat{C}H.$

It is obviously that exactly one of the following three will happen
to $g_{a}(y)$,

(H1): The solution $g_{a}(y)$ can be extended at least two times
such that $g_{a}(y)\in(0,\frac{1}{2}),g_{a}'(y)=0$;

(H2): The solution $g_{a}(y)$ can be extended only one time such
that $g_{a}(y)\in(0,\frac{1}{2}),g_{a}'(y)=0$;

(H3): The solution $g_{a}(y)$ cannot be extended to a point $y$
such that $g_{a}(y)\in(0,\frac{1}{2}),g_{a}'(y)=0.$

Let's draw some pictures of the phase space to illustrate the three
cases. 

\includegraphics[scale=0.4]{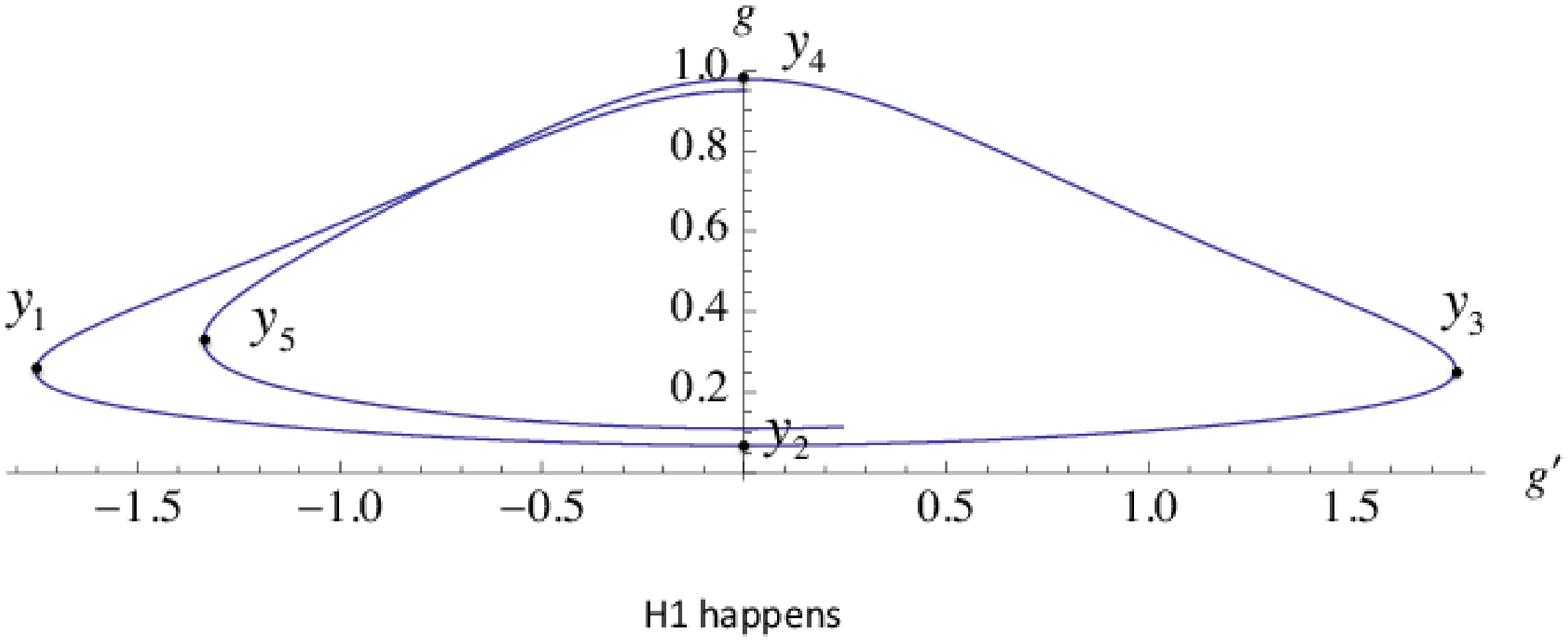}

\includegraphics[scale=0.4]{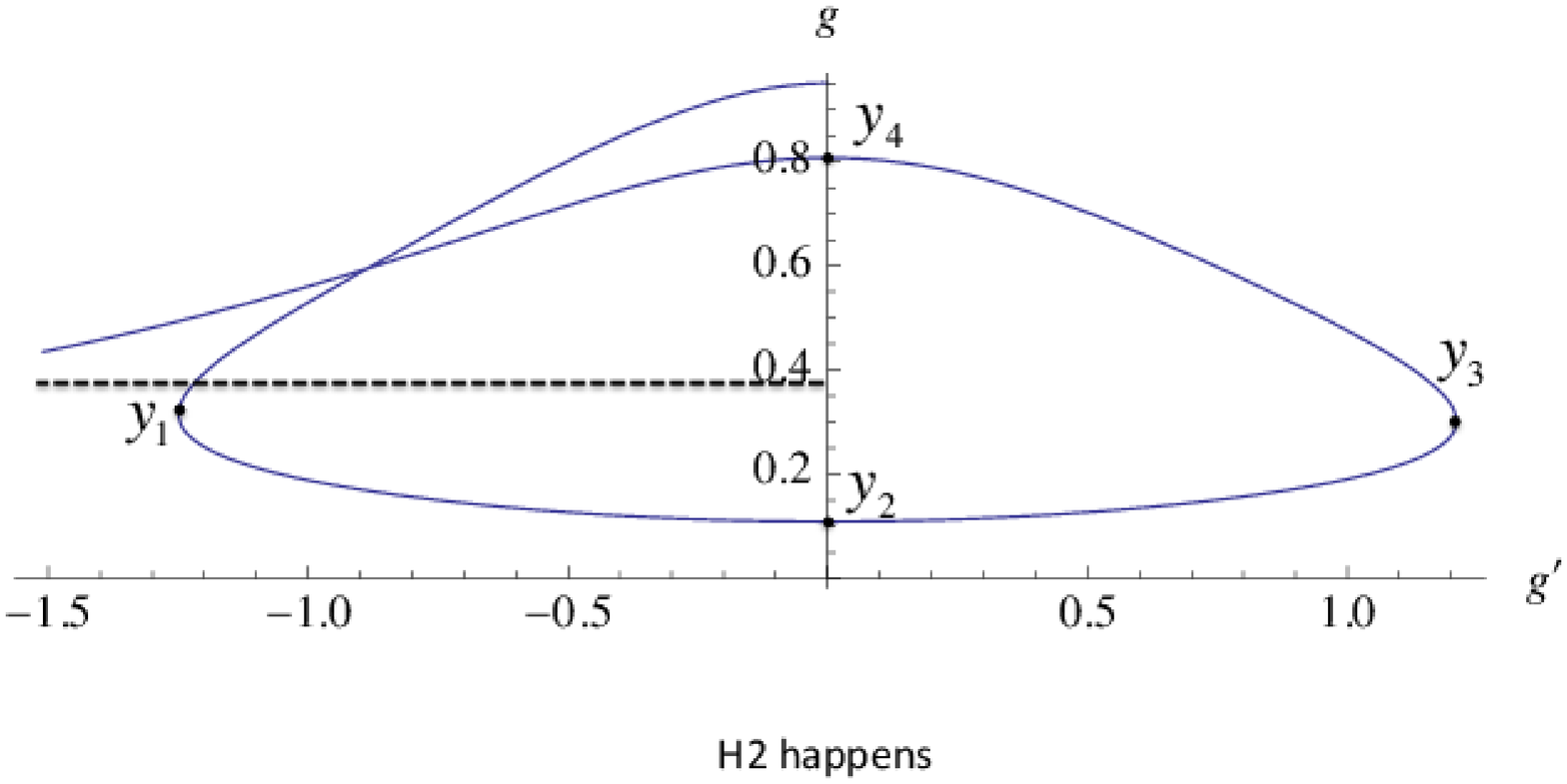}

\includegraphics[scale=0.4]{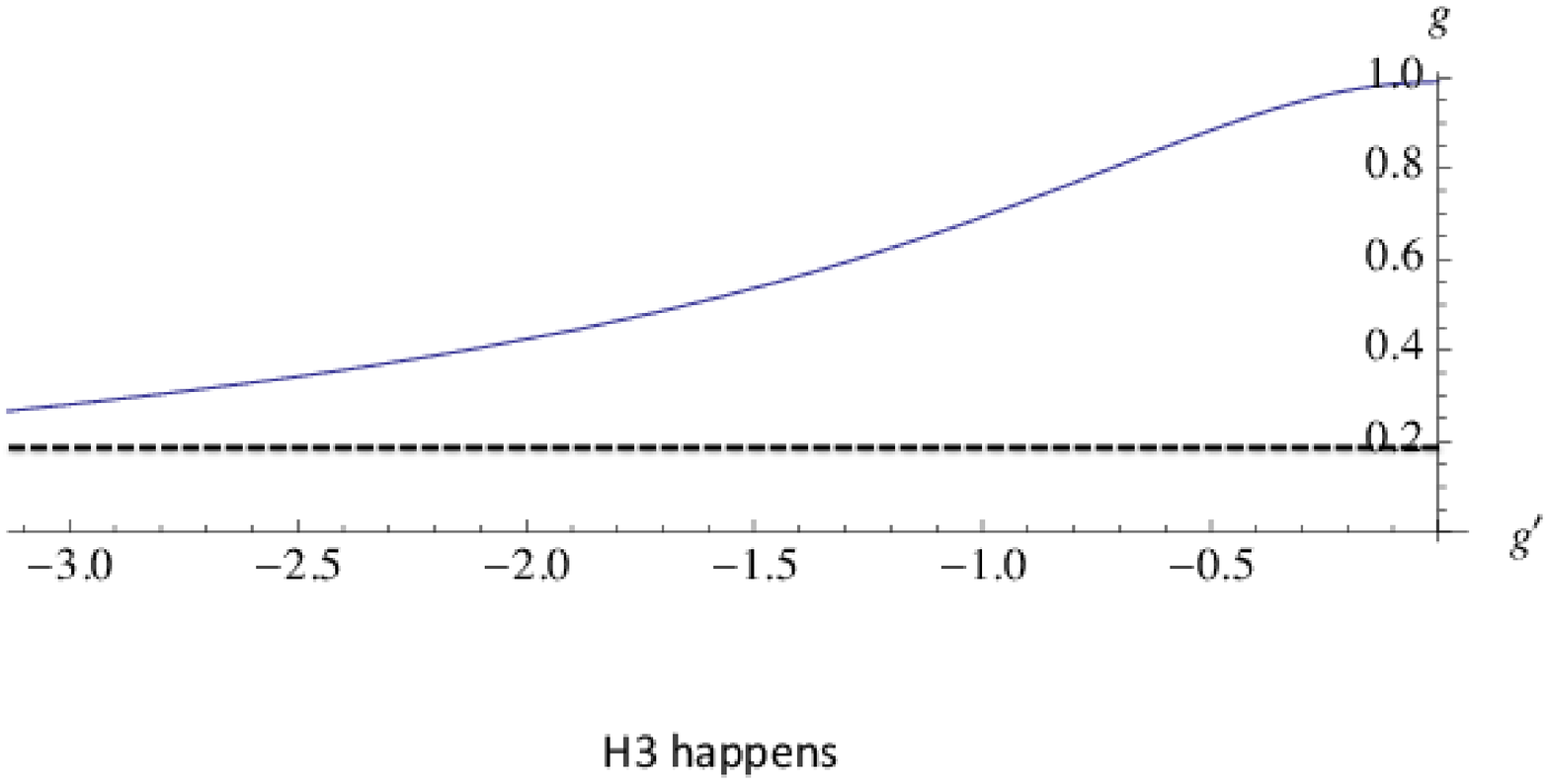}

\begin{lem}\label{y1 implies y2} There is a uniform $C_{4}>0$ such
that if $g_{a}(y)$ can be extended to some $y_{1}>0$ such that $0<g_{a}(y_{1})<C_{4},g'_{a}(y_{1})<0$
and $g''_{a}(y_{1})=0,$ then there is $y_{2}>y_{1}$ such that $g_{a}(y_{2})\in(0,\frac{1}{2})$
and $g'_{a}(y_{2})=0.$ Moreover, in $[y_{1},y_{2}]$, $g_{a}(y)$
is monotonically decreasing and $g'_{a}(y)$ is monotonically increasing.
(This can be seen in the case ``H1'' and ``H2''.)

\end{lem}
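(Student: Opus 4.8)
The plan is to pass to the variable $k_{a}(y)=g_{a}(y)\sqrt{1+g_{a}'(y)^{2}}$, in terms of which (\ref{g'' and k}) says that $g_{a}''$ has the same sign as $1-(2+\rho)k_{a}$, and to exploit that the hypotheses force $|g_{a}'(y_{1})|$ to be large. Differentiating $k_{a}$ and substituting (\ref{g'' and k}) gives the identity
\[
k_{a}'=g_{a}'\sqrt{1+g_{a}'^{2}}\,(2-(2+\rho)k_{a}).
\]
Since $\rho$ in (\ref{rho-original}) does not involve $g_{a}''$, the hypothesis $g_{a}''(y_{1})=0$ is the genuine constraint $(2+\rho(y_{1}))k_{a}(y_{1})=1$; with $|\rho|\le\hat{C}H$ (Lemma \ref{d lower bound}) this gives $k_{a}(y_{1})=1/(2+\rho(y_{1}))\in(\tfrac12-\hat{C}H,\tfrac12+\hat{C}H)$, so, choosing $C_{4}\le\tfrac14$ and $H$ small, $\sqrt{1+g_{a}'(y_{1})^{2}}=k_{a}(y_{1})/g_{1}\ge 1/(4C_{4})$ and $|g_{a}'(y_{1})|\ge c_{1}/g_{1}$ for a fixed $c_{1}>0$, where $g_{1}:=g_{a}(y_{1})<C_{4}$.

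Next I would show $g_{a}''>0$ just past $y_{1}$. Differentiating (\ref{rho-original}) one sees that every term of $\rho'$ that is not already $O(H)$ carries a factor $g_{a}''$; evaluating at $y_{1}$ (where $g_{a}''=0$) and using the lower bound on $d$ from Lemma \ref{d lower bound} gives $|\rho'(y_{1})|\le C(p,\lambda)H$. From the identity above, $\frac{d}{dy}[(2+\rho)k_{a}]=\rho'k_{a}+(2+\rho)k_{a}'$, which at $y_{1}$ equals $\rho'(y_{1})k_{a}(y_{1})+(2+\rho(y_{1}))\,g_{a}'(y_{1})\sqrt{1+g_{a}'(y_{1})^{2}}$; the second term is negative of size $\gtrsim g_{1}^{-2}$ and, once $H$ is small relative to $C_{4}$ (with $p,\lambda$ fixed), dominates the first. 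Hence $(2+\rho)k_{a}$ is strictly decreasing at $y_{1}$, drops below $1$, and $g_{a}''>0$ immediately past $y_{1}$.

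The bootstrap. Let $y_{2}:=\sup\{y>y_{1}:g_{a}'<0\text{ on }(y_{1},y]\}$; on $(y_{1},y_{2})$, $g_{a}$ is strictly decreasing, so $0<g_{a}<g_{1}<C_{4}$. Let $T\in(y_{1},y_{2}]$ be the first zero of $g_{a}''$ after $y_{1}$ (take $T=y_{2}$ if there is none); on $(y_{1},T)$, $g_{a}'$ is strictly increasing, so $|g_{a}'|<|g_{a}'(y_{1})|$ and therefore $k_{a}(y)\le\frac{g_{a}(y)}{g_{1}}k_{a}(y_{1})$. If $T<y_{2}$, consider two cases. If $g_{1}-g_{a}(T)\ge g_{1}\hat{C}H$, then $g_{a}(T)/g_{1}\le 1-\hat{C}H$ beats the at-most-$O(H)$ relative growth of $2+\rho$ and forces $(2+\rho(T))k_{a}(T)<1$, contradicting $g_{a}''(T)=0$. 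If $g_{1}-g_{a}(T)<g_{1}\hat{C}H$, then since $|g_{a}'|$ stays comparable to $|g_{a}'(y_{1})|\ge c_{1}/g_{1}$ near $y_{1}$ one gets $T-y_{1}=O(g_{1}^{2}H)$, and over such a tiny interval the negative derivative of $(2+\rho)k_{a}$ at $y_{1}$ (of size $\gtrsim g_{1}^{-2}$) persists — a crude bound $O(g_{1}^{-4})$ on its second derivative times $O(g_{1}^{2}H)$ is $o(g_{1}^{-2})$ — so again $(2+\rho(T))k_{a}(T)<1$, a contradiction. Hence $T=y_{2}$, i.e. $g_{a}''>0$, so $g_{a}'$ is strictly increasing, on all of $(y_{1},y_{2})$.

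Finally, on $(y_{1},y_{2})$ both $g_{a}\in(0,g_{1})$ and $g_{a}'\in(g_{a}'(y_{1}),0)$ are monotone, hence converge as $y\to y_{2}^{-}$. If $y_{2}=\infty$, then $g_{a}\to\ell$ forces $g_{a}'\to0$; since $d\to\infty$ makes $\rho\to0$ (every term of (\ref{rho-original}) has $d$ or $y$ in its denominator), the limit equation would require $2\ell=1$, impossible because $\ell\le g_{1}<C_{4}\le\tfrac12$; and $\ell=0$ is excluded because near $g_{a}=0$ equation (\ref{g'' and k}) gives $g_{a}g_{a}''\to1$, incompatible with a positive convex $g_{a}$ having bounded $g_{a}'$ flatten to a horizontal tangent (e.g. $g_{a}g_{a}''\to1$ with $g_{a}'$ bounded makes $(g_{a}^{2})''$ bounded below, forcing $g_{a}$ to vanish linearly and hence $g_{a}''\to0$). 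The same reasoning gives $g_{a}(y_{2})>0$, so by Lemma \ref{closeness} the solution extends to $y_{2}$ with $g_{a}'(y_{2})=0$; since $g_{a}(y_{2})<g_{1}<C_{4}\le\tfrac12$ we get $g_{a}(y_{2})\in(0,\tfrac12)$, and the monotonicity of $g_{a}$ and $g_{a}'$ on $[y_{1},y_{2}]$ is exactly what the bootstrap produced. The main obstacle is the bootstrap: away from $y_{1}$, $\rho'$ is no longer manifestly small (its $g_{a}''$ terms can be large), so one cannot control $(2+\rho)k_{a}$ by controlling $\rho$ directly; the resolution is the competition between the order-one relative decay of $g_{a}$ over a $y$-interval of length $O(g_{1}^{2})$ and the merely $O(H)$ variation of $2+\rho$, with the delicate bookkeeping confined to the initial sliver of length $O(g_{1}^{2}H)$.
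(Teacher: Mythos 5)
Your skeleton is the paper's: you work with $k_{a}=g_{a}\sqrt{1+g_{a}'^{2}}$, read $g_{a}''(y_{1})=0$ as $(2+\rho(y_{1}))k_{a}(y_{1})=1$, and try to keep $(2+\rho)k_{a}$ below $1$ past $y_{1}$ so that $g_{a}''>0$, concluding by monotone limits and Lemma \ref{closeness}. Where you differ is the mechanism for the key step: the paper avoids your two-case analysis at the first zero $T$ of $g_{a}''$ entirely, because it proves the pointwise inequality $\frac{d}{dg_{a}}((2+\rho)k_{a})>0$ throughout the dangerous band $\frac{1}{2+\hat{C}H}\le k_{a}\le\frac{1}{2+\rho(y_{1})}$, using $|\rho'|\le CH(1+|g_{a}'|)$ together with $g_{a}'\le-1$ there (this is exactly what the choice of $C_{4}$ buys); no second derivatives and no borderline case are needed. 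Your case where $g_{1}-g_{a}(T)\ge g_{1}\hat{C}H$ does close (the product $\frac{2+\hat{C}H}{2-\hat{C}H}(1-\hat{C}H)$ is strictly below $1$), but the complementary case rests on an asserted $O(g_{1}^{-4})$ bound for $\frac{d^{2}}{dy^{2}}((2+\rho)k_{a})$ that you never verify: it requires $|g_{a}''|\lesssim g_{1}^{-3}$, control of $\rho''$ (hence constants depending on $p,\lambda$ through higher derivatives of $\phi$), and the lower bound $|g_{a}'|\gtrsim g_{1}^{-1}$ on $(y_{1},T)$, which you assert (``stays comparable'') rather than derive from $k_{a}(T)\approx\frac{1}{2}$, $g_{a}(T)\ge(1-\hat{C}H)g_{1}$ and the monotonicity of $g_{a}'$. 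Also ``every term of $\rho'$ not already $O(H)$ carries a factor $g_{a}''$'' is false as written: the $\phi$-terms contribute $O(H^{2}|g_{a}'|)$ pieces with no $g_{a}''$; this is harmless for your comparison with $g_{1}^{-2}$ but the stated bound $|\rho'(y_{1})|\le C(p,\lambda)H$ is not quite right.

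The genuine gap is at the end. ``The same reasoning gives $g_{a}(y_{2})>0$'' is not covered by what you wrote: your $(g_{a}^{2})''$/limit-equation discussion treats $y_{2}=\infty$, where $g_{a}'$ is forced to tend to $0$; at a finite $y_{2}$ you must exclude $g_{a}\to0$ with $g_{a}'\to m\in[g_{a}'(y_{1}),0]$, and neither subcase follows from your argument ($m<0$ needs a non-integrability argument for $g_{a}''\sim(1+m^{2})/g_{a}$ against the convergence of $g_{a}'$; $m=0$ needs something like integrating $g_{a}'g_{a}''\le\frac{g_{a}'}{2g_{a}}$, i.e. $\frac{d}{dy}\frac{g_{a}'^{2}}{2}\le\frac{1}{2}\frac{d}{dy}\ln g_{a}$, to reach a contradiction). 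Without a positive lower bound for $g_{a}$ near $y_{2}$ you cannot place $(g_{a},g_{a}')$ in a region $D(A_{1},A_{2},A_{3},A_{4})$, so Lemma \ref{closeness} does not apply and the extension to $y_{2}$ with $g_{a}'(y_{2})=0$ is unjustified. This is precisely where the paper does its real work: integrating (\ref{Ka and ga}) gives $k_{a}(y)\le C(g_{a}(y)/g_{a}(y_{1}))^{2}$, then integrating $dg_{a}'/dg_{a}$ yields the explicit bound $g_{a}(\tilde{y}_{2})\ge{\rm e}^{-C}(1+g_{a}'(y_{1})^{2})^{-\frac{1}{2}}g_{a}(y_{1})$, and finiteness of $y_{2}$ then comes from Lemma \ref{beta-alpha estimate}. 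You need to supply an argument of this kind before your last step stands.
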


\begin{proof} From Lemma \ref{local existence}, we know that the
solution can be extended beyond $y_{1}$ for a small interval. $ $We
want to show that, when $y>y_{1}$, as long as the solution exists
and $g_{a}>0$ and $g_{a}'<0$, we have $g''_{a}>0.$ As before we
denote 
\[
k_{a}(y)=g_{a}\sqrt{1+g_{a}'^{2}}.
\]
Note that $|\rho|\leq\hat{C}H.$ If $k_{a}(y)<\frac{1}{2+\hat{C}H},$
we have
\[
g_{a}''(y)=\frac{1}{g_{a}}(1+g_{a}'^{2})(1-(2+\rho)k_{a})>0.
\]
From easy calculation 
\begin{equation}
\frac{dk_{a}(y)}{dy}=g_{a}'(1+g_{a}'^{2})^{\frac{1}{2}}+g_{a}(1+g_{a}'^{2})^{-\frac{1}{2}}g_{a}'g_{a}''.\label{ka-derivative}
\end{equation}
When $g_{a}>0,g_{a}'<0,$ we have
\begin{align*}
\frac{dk_{a}}{dg_{a}} & =\frac{dk_{a}(y)}{dy}\frac{1}{g_{a}'}=(1+g_{a}'^{2})^{\frac{1}{2}}+g_{a}(1+g_{a}'^{2})^{-\frac{1}{2}}g_{a}''\\
 & =\frac{k_{a}(2-(2+\rho)k_{a})}{g_{a}}.
\end{align*}
So we have 
\begin{equation}
\frac{dk_{a}}{k_{a}(2-(2+\rho)k_{a})}=\frac{dg_{a}}{g_{a}}.\label{Ka and ga}
\end{equation}
When $y=y_{1},$ from $g_{a}''(y_{1})=0,$ we have $2-(2+\rho(y_{1}))k_{a}(y_{1})=1$.
So as long as the solution can be extended when $y>y_{1}$ and $g_{a}>0,g_{a}'<0$
we have $k_{a}$ is monotonically decreasing. So once $k_{a}(y)<\frac{1}{2+\hat{C}H},$
$g_{a}''(y)>0$ holds as long as $y>y_{1},g_{a}>0,g_{a}'<0$.

The problem is whether we still have $g_{a}''(y)>0,$ when $y>y_{1}$
and $\frac{1}{2+\rho(y_{1})}\geq k_{a}\geq\frac{1}{2+\hat{C}H}$.
We choose $0<C_{4}<\frac{1}{\sqrt{2}(2+\hat{C}H)}$. When $0<g_{a}(y)\leq g_{a}(y_{1})\leq C_{4}$,
$k_{a}=g_{a}(y)\sqrt{1+g_{a}'^{2}(y)}\geq\frac{1}{2+\hat{C}H}$ and
$g_{a}'(y)<0$ we have 
\[
\sqrt{1+g_{a}'^{2}(y)}\geq\sqrt{2}
\]
which implies $g_{a}'(y)\leq-1.$ 

From direct calculations, $\exists$ a uniform constant $C>0$, such
that when $H$ is sufficiently small
\begin{align}
|\rho'| & \leq(\frac{H}{d^{3}}y+\frac{H}{d^{3}}g_{a}|g_{a}'|)(1+\frac{|g''_{a}|}{(1+(g'_{a})^{2})^{\frac{3}{2}}})+C(p,\lambda)H^{2}(1+|g_{a}'|)\nonumber \\
 & \leq CH(1+|g_{a}'|).\label{rho'}
\end{align}
From 
\begin{align*}
\frac{d}{dy}((2+\rho)k_{a}) & =\rho'k_{a}+(2+\rho)k_{a}'
\end{align*}
and (\ref{ka-derivative})(\ref{rho'}) we have 
\begin{equation}
\frac{d}{dg_{a}}((2+\rho)k_{a})\geq\frac{1}{2}\frac{k_{a}}{g_{a}}(2+\rho-|\frac{2CH(1+g_{a}')g_{a}}{g_{a}'}|).\label{(2+rho)ka}
\end{equation}
When $y>y_{1},k_{a}\geq\frac{1}{2+\hat{C}H}$, from 
\[
g_{a}'\leq-1
\]
 we have, for $H$ sufficiently small, $\frac{d}{dg_{a}}((2+\rho)k_{a})$
is positive. (Here we need the estimate $g_{a}'\leq-1$. We use $C_{4}$
for this technical reason.) So when $y>y_{1}$ and $\frac{1}{2+\rho(y_{1})}\geq k_{a}\geq\frac{1}{2+\hat{C}H},$
we have $(2+\rho)k_{a}$ monotonically decreasing as $y$ increases.
So when $y>y_{1},$
\[
g_{a}''(y)=\frac{1}{g_{a}}(1+g_{a}'^{2})(1-(2+\rho)k_{a})>0.
\]

By now we have known, when $y>y_{1}$, as long as the solution exists
and $0<g_{a}\leq C_{4}$ and $g_{a}'<0$, $g_{a}$ is monotonically
decreasing and $g_{a}'$ is monotonically increasing. Now we prove
that there is a positive lower bound for $g_{a}.$ We assume the solution
can be extended from $y_{1}$ to $\tilde{y}_{2}$ with $g_{a}(\tilde{y}_{2})>0$
and $g_{a}'(\tilde{y}_{2})<0.$ From (\ref{Ka and ga}) we know when
$y\in[y_{1},\tilde{y}_{2}]$ 
\[
\int_{g_{a}(y)}^{g_{a}(y_{1})}\frac{dg_{a}}{g_{a}}=\int_{k_{a}(y)}^{k_{a}(y_{1})}\frac{dk_{a}}{k_{a}(2-(2+\rho)k_{a})}\leq\int_{k_{a}(y)}^{k_{a}(y_{1})}\frac{dk_{a}}{k_{a}(2-(2+\hat{C}H)k_{a})}.
\]
So we have 
\[
\ln\frac{g_{a}(y_{1})}{g_{a}(y)}\leq(\frac{1}{2}\ln\frac{(2+\hat{C}H)k_{a}}{2-(2+\hat{C}H)k_{a}})|_{k_{a}(y)}^{k_{a}(y_{1})},
\]
which implies
\[
k_{a}(y)\leq C(\frac{g_{a}(y)}{g_{a}(y_{1})})^{2}.
\]
From 
\[
\frac{dg_{a}'}{dg_{a}}=\frac{g_{a}''}{g_{a}'}=\frac{(1+(g_{a}')^{2})[1-(2+\rho)k_{a}]}{g_{a}g_{a}'}
\]
we have
\begin{align*}
\int_{g_{a}'(\tilde{y}_{2})}^{g_{a}'(y_{1})}\frac{g_{a}'dg_{a}'}{1+(g_{a}')^{2}} & =\int_{g_{a}(\tilde{y}_{2})}^{g_{a}(y_{1})}\frac{1-(2+\rho)k_{a}}{g_{a}}dg_{a}\\
 & \geq\int_{g_{a}(\tilde{y}_{2})}^{g_{a}(y_{1})}\frac{1-C(\frac{g_{a}(y)}{g_{a}(y_{1})})^{2}}{g_{a}(y)}dg_{a}(y)
\end{align*}
which implies
\[
\frac{1}{2}\ln(1+g_{a}'^{2})|_{g_{a}'(\tilde{y}_{2})}^{g_{a}'(y_{1})}\geq\ln\frac{g_{a}(y_{1})}{g_{a}(\tilde{y}_{2})}-\frac{C}{2}g_{a}(y_{1})^{-2}(g_{a}(y_{1})^{2}-g_{a}(\tilde{y}_{2})^{2}).
\]
So we have 
\[
g_{a}(\tilde{y}_{2})\geq{\rm e}^{-C}(1+g_{a}'(y_{1})^{2})^{-\frac{1}{2}}g_{a}(y_{1}).
\]
Suppose $y_{2}$ is the supremum of the values $\tilde{y}_{2}$ until
which the solution can be extended and $g_{a}(\tilde{y}_{2})>0,g'_{a}(\tilde{y}_{2})<0.$
From Lemma \ref{beta-alpha estimate}, we can prove that $\tilde{y}_{2}$
is bounded (We can divide $g_{a},y\in[y_{1},\tilde{y}_{2}]$ into
two parts. On the first part $g_{a}$ is bounded, $g_{a}'<-\frac{1}{2}$
and on the second part $-\frac{1}{2}<g_{a}'<0$ and $k_{a}$ is away
from $\frac{1}{2}$ ), hence $y_{2}$ is finite. 

From Lemma \ref{closeness} and the monotonicity of $g_{a}$ and $g_{a}'$
we know $g_{a}$ can be extended to $y_{2}$ and 
\begin{align*}
g_{a}(y_{2}) & \geq{\rm e}^{-C}(1+g_{a}'(y_{1})^{2})^{-\frac{1}{2}}g_{a}(y_{1}),\\
g_{a}'(y_{2}) & \leq0.
\end{align*}
So we must have $g_{a}'(y_{2})=0,$ or there would be a contradiction
with the fact that $y_{2}$ is the supremum of $\tilde{y}_{2}$. Because
$C_{4}<\frac{1}{2}$, we have $0<g_{a}(y_{2})<\frac{1}{2}.$ 

\end{proof}

\iffalse

\begin{lem}\label{y2-y1 estimate} Suppose $g_{a}(y_{1})>0,g_{a}'(y_{1})\leq-1,g_{a}''(y_{1})=0$
and in $(y_{1},\tilde{y}_{2}]$ $g_{a}(y)>0,g_{a}'(y)<0,g_{a}''(y)>0.$
Then 
\[
\tilde{y}_{2}-y_{1}\leq Cg_{a}(y_{1}).
\]

\end{lem}

\begin{proof}It is easy to see that $g_{a}(y)$ is monotonically
decreasing and $g_{a}'(y)$ in monotonically increasing in $(y_{1},\tilde{y}_{2}].$
If $g_{a}'(\tilde{y}_{2})\leq-\frac{1}{2}$
\[
\tilde{y}_{2}-y_{1}=\int_{g_{a}(y_{1})}^{g_{a}(\tilde{y}_{2})}\frac{dg_{a}}{g_{a}'}\leq2g_{a}(y_{1}).
\]
If $-\frac{1}{2}<\tilde{y}_{2}<0$, then when there is exactly one
$y_{2}'\in(y_{1},y_{2})$ such that $g_{a}(y_{2}')=-\frac{1}{2}.$
We know $y_{2}'-y_{1}\leq2g_{a}(y_{1}).$ When $y\in[y_{2}',y_{2}]$,
we have 
\[
g_{a}(y)\sqrt{1+g_{a}'(y)^{2}}\leq\frac{\sqrt{5}}{4}g_{a}(y_{1})\sqrt{1+g_{a}'(y_{1})^{2}}.
\]
 So we know that when $y\in[y_{2}',y_{2}]$, $k_{a}$ is away from
$\frac{1}{2}$. 
\[
\tilde{y}_{2}-y_{2}'=\int_{-\frac{1}{2}}^{g_{a}'(\tilde{y}_{2})}\frac{g_{a}dg_{a}'}{(1+g_{a}'^{2})(1-(2+\rho)k_{a})}\leq Cg_{a}(y_{1}).
\]

\end{proof}

\fi

\begin{lem}\label{H3 holds} When $a\in[0.9,1.1]$, if (H3) happens,
then $\exists y_{1}'\in(0,+\infty)$ such that $g_{a}(y)$ can be
extended until $(0,y_{1}')$ but not until $(0,y_{1}']$. Moreover,
in $(0,y_{1}')$, $g_{a}(y)$ and $g_{a}'(y)$ are monotonically decreasing
and 
\[
\lim_{y\rightarrow y_{1}'^{-}}g_{a}(y)\geq0,\lim_{y\rightarrow y_{1}'^{-}}g_{a}'(y)=-\infty.
\]

\end{lem}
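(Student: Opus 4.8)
The plan is to prove that, under (H3), the orbit $(g_{a},g_{a}')$ never turns around --- that is, $g_{a}''<0$ on the whole maximal interval of existence --- after which the monotonicity and blow-up in the statement follow with little extra work. By Lemma \ref{local existence} the solution exists as a positive solution on a maximal interval $[0,y_{1}')$ with $y_{1}'\in(0,+\infty]$; on it $g_{a}>0$, and at $y_{1}'$ either $g_{a}\to0$ or $(g_{a},g_{a}')$ leaves every bounded set. As in the proof of Lemma \ref{d lower bound}, since $a\in[0.9,1.1]$ and $|\rho|\le\hat{C}H$ for $H$ small, formula (\ref{g'' and k}) gives $g_{a}''(0)=\frac{1}{a}(1-(2+\rho(0))a)<0$, so $g_{a}$ and $g_{a}'$ are strictly decreasing just after $0$.

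The key step is: $g_{a}''<0$ on all of $(0,y_{1}')$. Suppose not and let $y_{1}\in(0,y_{1}')$ be the first zero of $g_{a}''$; then $g_{a}''<0$ on $[0,y_{1})$, so $g_{a}$ and $g_{a}'$ are strictly decreasing there and $g_{a}'(y_{1})<g_{a}'(0)=0$. From $g_{a}''(y_{1})=0$ and (\ref{g'' and k}), $(2+\rho(y_{1}))k_{a}(y_{1})=1$; writing the Delaunay parameter as $\tau=g_{a}^{2}\frac{1-k_{a}}{k_{a}}$, equivalently $k_{a}=\frac{g_{a}^{2}}{g_{a}^{2}+\tau}$, this becomes
\[
g_{a}(y_{1})^{2}=\frac{\tau(y_{1})}{1+\rho(y_{1})}.
\]
By (\ref{tau-ODE}) and $g_{a}'\le0$ on $[0,y_{1}]$, $|\tau(y_{1})-\tau(0)|\le\hat{C}H\int_{0}^{y_{1}}|g_{a}g_{a}'|\,dy=\frac{\hat{C}H}{2}(a^{2}-g_{a}(y_{1})^{2})\le\hat{C}H$, so $\tau(y_{1})=a(1-a)+O(H)$. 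If $a$ is large enough that $a(1-a)<-\hat{C}H$, then $\tau(y_{1})<0$, which is impossible since $g_{a}(y_{1})^{2}>0$; so in that range $g_{a}''$ has no zero and we are done. Otherwise $\tau(y_{1})\le a(1-a)+O(H)\le0.09+O(H)$ (the maximum of $a(1-a)$ over $[0.9,1.1]$ is $0.09$), hence $g_{a}(y_{1})\le\sqrt{0.09+O(H)}<C_{4}$ for $H$ small, where we take the constant $C_{4}$ of Lemma \ref{y1 implies y2} to satisfy $C_{4}>0.3$ (which is compatible with $C_{4}<\frac{1}{\sqrt{2}(2+\hat{C}H)}$). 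Now $0<g_{a}(y_{1})<C_{4}$, $g_{a}'(y_{1})<0$ and $g_{a}''(y_{1})=0$, so Lemma \ref{y1 implies y2} produces $y_{2}>y_{1}$ to which the solution extends with $g_{a}(y_{2})\in(0,\frac{1}{2})$ and $g_{a}'(y_{2})=0$, contradicting (H3). Hence $g_{a}''<0$ on $(0,y_{1}')$.

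The remainder is routine. By the above, $g_{a}$ and $g_{a}'$ are strictly decreasing on $(0,y_{1}')$, and $g_{a}''<0$ also forces $k_{a}=g_{a}\sqrt{1+g_{a}'^{2}}>\frac{1}{2+\rho}>\frac{1}{3}$ there. If $y_{1}'=+\infty$, then for a fixed $y_{0}\in(0,\infty)$, $g_{a}(y)\le g_{a}(y_{0})+g_{a}'(y_{0})(y-y_{0})\to-\infty$, contradicting $g_{a}>0$; hence $y_{1}'<+\infty$, and the solution cannot be extended to $y_{1}'$. Since $g_{a}$ is positive and decreasing, $g_{\infty}:=\lim_{y\to y_{1}'^{-}}g_{a}(y)\ge0$ exists; since $g_{a}'$ is negative and decreasing, $\ell:=\lim_{y\to y_{1}'^{-}}g_{a}'(y)\in[-\infty,0)$ exists. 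If $\ell$ were finite: when $g_{\infty}>0$, Lemma \ref{closeness} would extend the solution past $y_{1}'$, a contradiction; when $g_{\infty}=0$, $k_{a}\to g_{\infty}\sqrt{1+\ell^{2}}=0$, contradicting $k_{a}>\frac{1}{3}$. Therefore $\ell=-\infty$, which is the asserted blow-up, and all parts of the statement follow.

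The one genuinely delicate point is the key step in the regime $a\approx1$, where $\tau(0)=a(1-a)$ is of the same order as (or smaller than) the $O(H)$ drift of $\tau$: one cannot control the sign of $\tau$, and must instead use $g_{a}(y_{1})^{2}=\tau(y_{1})/(1+\rho(y_{1}))$ to place any putative turning point at $g_{a}(y_{1})=O(\sqrt{H})<C_{4}$, so that Lemma \ref{y1 implies y2} still applies. Once the monotonicity of $g_{a}'$ is established, nothing else presents any difficulty.
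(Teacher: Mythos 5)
Your proof is correct and follows essentially the same route as the paper: the heart of both arguments is the near-conservation of the Delaunay parameter, $|\tau(y_{1})-a(1-a)|\leq CH$, combined with $(2+\rho(y_{1}))k_{a}(y_{1})=1$ to force any first zero of $g_{a}''$ to occur at $g_{a}(y_{1})\leq\sqrt{0.09+O(H)}<C_{4}$, so that Lemma \ref{y1 implies y2} contradicts (H3). Your shortcuts in the routine tail (the concavity/tangent-line argument for $y_{1}'<+\infty$ in place of Lemma \ref{beta-alpha estimate}, and using $k_{a}>\tfrac{1}{3}$ rather than locating an intermediate inflection point to rule out a finite limit of $g_{a}'$ when $g_{a}\rightarrow0$) are valid minor simplifications.
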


\begin{proof} When $a\in[0.9,1.1],$ we claim, if (H3) happens, when
$y>0$, as long as the solution can be extended, $g_{a}(y)>0,g_{a}'(y)<0,g_{a}''(y)<0$. 

First we prove this claim. When $y\in(0,\delta)$ for a small $\delta$,
$g_{a}(y)>0,g_{a}'(y)<0,g_{a}''(y)<0$ hold. If the claim were false
then there would be some $y_{1}>0$ such that 
\[
g_{a}(y_{1})>0,g_{a}'(y_{1})<0,g_{a}''(y_{1})=0
\]
and in $(0,y_{1})$, $g_{a}(y)>0,g_{a}'(y)<0,g_{a}''(y)<0$ hold.
From the monotonicity of $g_{a}$ we have 
\begin{equation}
\frac{d\tau}{dg_{a}}=\frac{d\tau}{dy}\frac{1}{g_{a}'}=g_{a}\rho.\label{tau w.r.t. g}
\end{equation}
Since $g_{a}$ is bounded, there is $C>0$ such that 
\begin{equation}
|\tau(g_{a}(y_{1}),g_{a}'(y_{1}))-(a-a^{2})|=|\tau(g_{a}(y_{1}),g_{a}'(y_{1}))-\tau(a,0)|\leq CH\label{tau(y1)-tau(0)}
\end{equation}

\[
g_{a}(y_{1})=\sqrt{\frac{\tau(g_{a}(y_{1}),g_{a}'(y_{1}))}{\frac{1}{k_{a}(y_{1})}-1}}=\sqrt{\frac{a-a^{2}\pm CH}{\frac{1}{k_{a}(y_{1})}-1}}.
\]
Note that $ $$k_{a}(y_{1})\in[\frac{1}{2+\hat{C}H},\frac{1}{2-\hat{C}H}]$.
So we can choose $H$ sufficiently small such that $g_{a}(y_{1})\leq\sqrt{\frac{0.9-0.9^{2}+CH}{1-CH}}\approx0.3.$
One can choose $C_{4}$ in Lemma \ref{y1 implies y2} such that $\sqrt{\frac{0.9-0.9^{2}+CH}{1-CH}}\leq C_{4}\leq\frac{1}{2(\sqrt{2}+CH)}$.
So $g_{a}(y_{1})\leq C_{4}.$ From Lemma \ref{y1 implies y2} one
can find a contradiction with (H3). So the claim is true.

Now we prove the lemma. Suppose the solution can be extended to $[0,y^{*}]$.
From $g_{a}(y^{*})>0,g_{a}'(y^{*})<0,g''_{a}(y^{*})<0$ and Lemma
\ref{local existence}, the solution can be extended beyond $y^{*}.$
So we know the maximal interval in which the solution exists should
look like $[0,y_{1}')$ with $y_{1}'$ possibly be $+\infty.$ To
see that $y_{1}'$ is finite, we just prove that when $-\frac{1}{2}<g_{a}'<0$,
$k_{a}$ is away from $\frac{1}{2}$. Then we can apply Lemma \ref{beta-alpha estimate},
by dividing the orbit into two parts. Note that 
\[
k_{a}=g\sqrt{1+g'^{2}}=\frac{1\pm\sqrt{1-4\tau(1+g'^{2})}}{2}.
\]
So we have 
\[
|k_{a}-\frac{1}{2}|=\frac{\sqrt{1-4\tau(1+g'^{2})}}{2}.
\]
From (\ref{tau(y1)-tau(0)}) and $g'^{2}\leq\frac{1}{4}$ we have
\[
|k_{a}-\frac{1}{2}|\geq\frac{\sqrt{0.55\pm CH}}{2}.
\]

And from the monotonicity we know $\lim_{y\rightarrow y_{1}'^{-}}g_{a}(y),\lim_{y\rightarrow y_{1}'^{-}}g'_{a}(y)$
exist ( $\lim_{y\rightarrow y_{1}'^{-}}g'_{a}(y)$ is possibly $-\infty$)
and 
\[
\lim_{y\rightarrow y_{1}'^{-}}g_{a}(y)\geq0.
\]
We will see $\lim_{y\rightarrow y_{1}'^{-}}g_{a}'(y)$ can not be
finite. Should it be finite, if $\lim_{y\rightarrow y_{1}'^{-}}g_{a}(y)=0$,
from (\ref{g'' and k}) one could find $y_{1}<y_{1}'$ such that $g''_{a}(y_{1})=0$
(where $k_{a}\approx\frac{1}{2}$) which is a contradiction. If $\lim_{y\rightarrow y_{1}'^{-}}g_{a}'(y)$
should be finite and $\lim_{y\rightarrow y_{1}'^{-}}g_{a}(y)>0$,
as $y_{1}'$ is finite, from Lemma \ref{closeness} and Lemma \ref{local existence},
the solution can be extended beyond $y_{1}$ which is also a contradiction.
So we have 
\[
\lim_{y\rightarrow y_{1}^{-}}g'_{a}(y)=-\infty.
\]

\end{proof} 

\begin{lem}\label{y2 to y3} If (H1) or (H2) happens, there exists
$y_{3}<+\infty$ such that $g_{a}$ can be extended to $y_{3}$ and
$g_{a}(y_{3})>g_{a}(y_{2}),g'_{a}(y_{3})>0,g''_{a}(y_{3})=0.$ In
$[y_{2},y_{3}]$ both $g_{a}$ and $g'_{a}$ are monotonically increasing.
Moreover, there is $C>0$ which does not depend on $a$ such that
\[
\tau(g_{a}(y_{3}),g_{a}'(y_{3}))\in[(1-CH)\tau(g_{a}(y_{1}),g_{a}'(y_{1})),(1+CH)\tau(g_{a}(y_{1}),g_{a}'(y_{1}))].
\]

\end{lem}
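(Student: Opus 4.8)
The plan is to follow the orbit forward past $y_{2}$ and show it climbs back up in a way that mirrors its descent on $[y_{1},y_{2}]$. Recall that in case (H1) or (H2) the points $y_{1}<y_{2}$ are those produced by Lemma~\ref{y1 implies y2}: $g_{a}''(y_{1})=0$, $g_{a}'(y_{1})<0$, $0<g_{a}(y_{2})<g_{a}(y_{1})<C_{4}$, $g_{a}'(y_{2})=0$, with $g_{a}$ decreasing and $g_{a}'$ increasing on $[y_{1},y_{2}]$. First I would note that $k_{a}(y_{2})=g_{a}(y_{2})<C_{4}<\frac{1}{\sqrt{2}(2+\hat{C}H)}<\frac{1}{2+\hat{C}H}\le\frac{1}{2+\rho}$, so by (\ref{g'' and k}) we get $g_{a}''(y_{2})>0$; hence by Lemma~\ref{local existence} the solution extends past $y_{2}$, where $g_{a}'>0$ and $g_{a}$ is increasing.

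Next I would \emph{define} $y_{3}$ to be the first zero of $g_{a}''$ after $y_{2}$. Since $g_{a}''$ is continuous and positive just to the right of $y_{2}$, on $(y_{2},y_{3})$ one has $g_{a}''>0$, hence $g_{a}'$ strictly increasing (so $g_{a}'>0$) and $g_{a}$ increasing; this is the claimed monotonicity. To see that $y_{3}<+\infty$ and that the solution actually reaches $y_{3}$, I would use the identity $\frac{dk_{a}}{dg_{a}}=\sqrt{1+g_{a}'^{2}}\,(2-(2+\rho)k_{a})$, obtained exactly as in the proof of Lemma~\ref{y1 implies y2} and valid regardless of the sign of $g_{a}'$. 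On $(y_{2},y_{3})$, $g_{a}''>0$ is equivalent to $k_{a}<\frac{1}{2+\rho}$, so $2-(2+\rho)k_{a}>1$, whence $k_{a}$ is increasing and $g_{a}=k_{a}/\sqrt{1+g_{a}'^{2}}\le k_{a}<1$ stays bounded, while $g_{a}\ge g_{a}(y_{2})>0$ keeps $g_{a}'$ bounded. Fixing $\varepsilon\in(0,y_{3}-y_{2})$ and setting $c_{0}=g_{a}'(y_{2}+\varepsilon)>0$, the estimate $\frac{dk_{a}}{dy}=g_{a}'\sqrt{1+g_{a}'^{2}}(2-(2+\rho)k_{a})\ge c_{0}$ on $[y_{2}+\varepsilon,y_{3})$ forces $k_{a}$ to reach the moving ceiling $\frac{1}{2+\rho}<1$ in finite $y$, so $y_{3}<+\infty$; since $g_{a}$ and $g_{a}'$ are monotone and bounded on $[y_{2},y_{3})$, Lemma~\ref{closeness} extends the solution to $y_{3}$, and by continuity $g_{a}''(y_{3})=0$, $g_{a}'(y_{3})>0$, $g_{a}(y_{3})>g_{a}(y_{2})$.

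For the $\tau$-estimate I would integrate $\frac{d}{dy}\tau=g_{a}g_{a}'\rho$ from $y_{1}$ to $y_{3}$, split at $y_{2}$, and change variables to $g_{a}$ (monotone on each of $[y_{1},y_{2}]$ and $[y_{2},y_{3}]$); using $|\rho|\le\hat{C}H$ this yields $|\tau(y_{3})-\tau(y_{1})|\le\frac{\hat{C}H}{2}\bigl(g_{a}(y_{1})^{2}+g_{a}(y_{3})^{2}\bigr)$. Since $g_{a}''=0$ at both $y_{1}$ and $y_{3}$, there $k_{a}=\frac{1}{2+\rho}$ and hence $\tau=g_{a}^{2}(1+\rho)$, so $g_{a}(y_{1})^{2}\le2\tau(y_{1})$ and $g_{a}(y_{3})^{2}\le2\tau(y_{3})$ for $H$ small; substituting gives $|\tau(y_{3})-\tau(y_{1})|\le\hat{C}H\bigl(\tau(y_{1})+\tau(y_{3})\bigr)$, which rearranges to $\tau(y_{3})\in[(1-CH)\tau(y_{1}),(1+CH)\tau(y_{1})]$ with $C$ depending only on $\hat{C}$, hence not on $a$.

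The main obstacle is precisely the finiteness of $y_{3}$ together with non-blow-up: ruling out that $g_{a}$ and $g_{a}'$ run off to infinity (while $g_{a}''>0$ persists) before $k_{a}$ can climb up to the critical value $\frac{1}{2+\rho}$. This is resolved by the observation that $g_{a}''>0$ is \emph{itself} the inequality $k_{a}<\frac{1}{2+\rho}<1$, which a posteriori bounds $g_{a}$, and then $g_{a}'$ since $g_{a}$ is bounded below, so the uniform lower bound on $\frac{dk_{a}}{dy}$ can close the loop.
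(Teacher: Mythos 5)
Your proof is correct and follows essentially the same route as the paper: the equivalence $g_{a}''>0\Leftrightarrow k_{a}<\frac{1}{2+\rho}$, the monotonicity of $k_{a}$ from the $\frac{dk_{a}}{dg_{a}}$ identity, the closeness/local-existence lemmas to reach the first inflection point $y_{3}$, and integration of $\frac{d\tau}{dy}=g_{a}g_{a}'\rho$ in the variable $g_{a}$ combined with $k_{a}=\frac{1}{2+\rho}$ at $y_{1},y_{3}$. The only minor variations are that you force finiteness of $y_{3}$ through the quantitative bound $\frac{dk_{a}}{dy}\geq c_{0}$ instead of the paper's barrier at $k_{a}=\frac{2}{3}$ together with Lemma \ref{beta-alpha estimate}, and you assemble the final two-sided estimate by absorbing $\hat{C}H(\tau(y_{1})+\tau(y_{3}))$ rather than chaining through $\tau(y_{2})$ and $g_{a}(y_{2})$; both steps are equally valid.
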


\begin{proof}So from (\ref{tau w.r.t. g}) we know
\begin{equation}
\frac{d\tau}{dg_{a}^{2}}=\frac{1}{2}\rho.\label{tau w.r.t. g^2}
\end{equation}
From $|\rho|\leq\hat{C}H$, we have 
\begin{equation}
|\tau(g_{a}(y_{2}),g_{a}'(y_{2}))-\tau(g_{a}(y_{1}),g_{a}'(y_{1}))|\leq CHg_{a}(y_{1})^{2}.\label{tau(y2)-tau(y1)}
\end{equation}
We know 
\[
\tau(g_{a}(y_{1}),g_{a}'(y_{1}))=(\frac{1}{k_{a}(y_{1})}-1)g_{a}(y_{1})^{2}\in[(1-CH)g_{a}(y_{1})^{2},(1+CH)g_{a}(y_{1})^{2}].
\]
 So 
\begin{equation}
\tau(g_{a}(y_{2}),g_{a}'(y_{2})))\in[(1-CH)g_{a}(y_{1})^{2},(1+CH)g_{a}(y_{1})^{2}].\label{tau(y2)}
\end{equation}
From $g_{a}(y_{2})\in(0,\frac{1}{2})$ and $g_{a}'(y_{2})=0$ we can
get finer estimate for $g_{a}(y_{2})$, i.e. 
\[
g_{a}(y_{2})\in((1-CH)g_{a}(y_{1})^{2},(1+CH)g_{a}(y_{1})^{2}).
\]

From Lemma \ref{local existence}, $g_{a}$ can be extended beyond
$y_{2}.$ And in a small interval $(y_{2},y_{2}+\delta)$, $g_{a}(y)>0,g_{a}'(y)>0,g_{a}''(y)>0.$
We want to show there is $y_{3}>y_{2}$ such that $g_{a}''(y_{3})=0$
and in $(y_{2},y_{3})$, $g_{a}''(y)>0.$ If it were false, then as
long as $g_{a}$ can be extended and $y>y_{2}$ we would have $g_{a}(y)>0,g_{a}'(y)>0,g_{a}''(y)>0.$
So when $y>y_{2}$, both $g_{a}$ and $g_{a}'$ would be monotonically
increasing. Suppose $g_{a}$ can be extended to $(y_{2},\tilde{y}_{3})$
but not until $\tilde{y}_{3}$, where $\tilde{y}_{3}$ is possibly
$+\infty$. However, as $k_{a}(y_{2})=g_{a}(y_{2})\leq(1+CH)g_{a}(y_{1})^{2}\leq(1+CH)C_{4}^{2}\leq\frac{1}{4}$,
$\{(g_{a},g_{a}');y>y_{2}\}$ can not intersect with $k_{a}=\frac{2}{3}$
or one would find a point $y_{3}$ such that $g_{a}''(y_{3})=0$ (where
$k_{a}\approx\frac{1}{2}$). When $y\rightarrow\tilde{y}_{3}^{-}$,
there is a limit for $(g_{a},g_{a}')$ which lies in the region bounded
by $g_{a}=g_{a}(y_{2}),g_{a}'=0$ and $k_{a}=\frac{2}{3}$. As in
the last lemma, it is easy to prove that $\tilde{y}_{3}$ has upper
bound and from Lemma \ref{closeness} and Lemma \ref{local existence},
we can extend the solution beyond $\tilde{y}_{3}$ which is a contradiction.
So there must be a $y_{3}>y_{2},y_{3}<+\infty$ such that $g_{a}''(y_{3})=0$.
We can choose $y_{3}$ as the minimum of such values, so in $(y_{2},y_{3})$,
$g_{a}(y)>0,g_{a}'(y)>0,g_{a}''(y)>0.$ 

From (\ref{tau w.r.t. g^2}) we know 
\[
|\tau(g_{a}(y_{2}),g_{a}'(y_{2}))-\tau(g_{a}(y_{3}),g_{a}'(y_{3}))|\leq CHg_{a}^{2}(y_{3}).
\]
So we have 
\[
g_{a}(y_{2})\in((1-CH)g_{a}(y_{3})^{2},(1+CH)g_{a}(y_{3})^{2}).
\]
Together with (\ref{tau(y2)}) we can deduce 
\[
g_{a}(y_{3})\in[(1-CH)g_{a}(y_{1}),(1+CH)g_{a}(y_{1})]
\]
 and 
\begin{equation}
\tau(g_{a}(y_{3}),g_{a}'(y_{3}))\in[(1-CH)\tau(g_{a}(y_{1}),g_{a}'(y_{1})),(1+CH)\tau(g_{a}(y_{1}),g_{a}'(y_{1}))].\label{tau(y3)}
\end{equation}

\end{proof}

\begin{lem} \label{y3 to y4} If (H1) or (H2) happens, $g_{a}$ can
be extended to $y_{4}>y_{3}$ and $g_{a}(y_{4})\in(\frac{1}{2},1.2),g_{a}'(y_{4})=0.$
In $[y_{3},y_{4}]$, $g_{a}$ is monotonically increasing and $g_{a}'$
is monotonically decreasing. There is $C>0$ such that 
\[
|\tau(g_{a}(y_{4}),g_{a}'(y_{4}))-\tau(g_{a}(y_{3}),g_{a}'(y_{3}))|\leq CH.
\]

\end{lem}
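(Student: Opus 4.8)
The plan is to run the argument of Lemma~\ref{y1 implies y2} in mirror image: here $g_a$ will be increasing and $g_a'$ decreasing toward $0$, where in Lemma~\ref{y1 implies y2} $g_a$ was decreasing and $g_a'$ increasing toward $0$. At $y_3$, Lemma~\ref{y2 to y3} gives $g_a'(y_3)>0$ and $g_a''(y_3)=0$, so (\ref{g'' and k}) forces $(2+\rho(y_3))k_a(y_3)=1$ with $k_a=g_a\sqrt{1+g_a'^2}$; and since $g_a(y_3)\le(1+CH)g_a(y_1)\le(1+CH)C_4$ while $k_a(y_3)=\tfrac1{2+\rho(y_3)}\ge\tfrac1{2+\hat CH}$ and $C_4<\tfrac1{\sqrt2(2+\hat CH)}$, one gets $k_a(y_3)/g_a(y_3)=\sqrt{1+g_a'(y_3)^2}>\sqrt2$, hence $g_a'(y_3)>1$ (with a fixed gap, for $H$ small). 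As in (\ref{tau w.r.t. g})--(\ref{tau w.r.t. g^2}), $\tfrac{d\tau}{dg_a^2}=\tfrac12\rho$ along this branch, so $\tau$ stays within $O(H)$ of $\tau(y_3)\ge(1-CH)g_a(y_1)^2>0$ (the lower bound from Lemma~\ref{y2 to y3}); thus $\tau>0$ up to $O(H)$, which gives $k_a<1+CH$, hence $g_a=k_a/\sqrt{1+g_a'^2}<1.2$, and $k_a\approx g_a^2/(g_a^2+\tau(y_3))$ is increasing in the increasing variable $g_a$, so $k_a\ge k_a(y_3)$ stays on the branch.

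The central step is the claim: \emph{for $y>y_3$, as long as the solution exists with $g_a>0$ and $g_a'>0$, one has $g_a''(y)<0$}, equivalently $(2+\rho)k_a>1$. By $|\rho|\le\hat CH$ this is automatic once $k_a\ge\tfrac1{2-2\hat CH}$, and by the monotonicity of $k_a$ just noted it then persists; so the only delicate range is the transition range $k_a(y_3)\le k_a<\tfrac1{2-2\hat CH}$, exactly as in Lemma~\ref{y1 implies y2}. There I would reproduce the monotonicity estimate for $(2+\rho)k_a$: as in (\ref{ka-derivative})--(\ref{(2+rho)ka}), using $g_a'>0$,
\[
\frac{d}{dg_a}\big((2+\rho)k_a\big)=\frac{\rho'k_a}{g_a'}+(2+\rho)\frac{k_a\big(2-(2+\rho)k_a\big)}{g_a},
\]
where at $y_3$ the second term equals $(2+\rho(y_3))\sqrt{1+g_a'(y_3)^2}>2\sqrt2$ while (\ref{rho'}) and $g_a'>1$ give $|\rho'k_a/g_a'|\le CH$. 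So $(2+\rho)k_a$ is strictly increasing at $y_3$ and crosses above $1$; throughout the transition range $g_a$ moves only by $O(H)$ (since $k_a$ does), so $g_a'$ stays $>1$, $k_a/g_a=\sqrt{1+g_a'^2}>\sqrt2$, and the displayed derivative remains $\ge2\sqrt2-CH>0$, so $(2+\rho)k_a$ keeps increasing; beyond the transition range $g_a''<0$ is automatic. Hence $g_a''<0$ on all of $(y_3,y_4)$, and monotonicity of $g_a$ (increasing) and $g_a'$ (decreasing) on $[y_3,y_4]$ follows at once.

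Granting this, $g_a'$ decreases to some limit $\ge0$ while $g_a$ increases and stays $<1.2$. Split at the unique point where $g_a'=\tfrac12$: on the piece with $g_a'\ge\tfrac12$, $|\Delta y|=\int dg_a/g_a'\le2\cdot1.2$; on the piece with $0\le g_a'\le\tfrac12$, $|k_a-\tfrac12|=\tfrac12\sqrt{1-4\tau(1+g_a'^2)}$ with $\tau(1+g_a'^2)\le(g_a(y_1)^2+CH)\tfrac54<\tfrac14$ (as $g_a(y_1)\le C_4$), so $k_a$ is bounded away from $\tfrac12$ and Lemma~\ref{beta-alpha estimate}(2) bounds this piece too; hence $y_4:=\sup\{y:g_a>0,\ g_a'>0\text{ on }(y_3,y)\}<\infty$. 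If $\lim_{y\to y_4^-}g_a'>0$, Lemmas~\ref{closeness} and~\ref{local existence} would extend the orbit past $y_4$ with $g_a'$ still positive, contradicting the definition; so $g_a$ extends to $y_4$ with $g_a'(y_4)=0$. Finally $\tau(y_4)=g_a(y_4)(1-g_a(y_4))$ and $|\tau(y_4)-\tau(y_3)|\le\tfrac12\hat CH|g_a(y_4)^2-g_a(y_3)^2|\le CH$ by (\ref{tau w.r.t. g^2}), which is the last assertion; and tracking $\tau$ shows $g_a'>0$ still holds when $g_a=\tfrac12$ (this is where $g_a(y_1)<\tfrac12$, i.e.\ $\tau(y_3)\approx g_a(y_1)^2<\tfrac14$, enters), so $g_a(y_4)=\tfrac{1+\sqrt{1-4\tau(y_4)}}2\in(\tfrac12,1.2)$. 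I expect the transition-range estimate in the second paragraph to be the only real work; everything else repeats the bookkeeping of Lemmas~\ref{y1 implies y2}--\ref{y2 to y3}.
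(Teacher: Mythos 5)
Your proof follows the paper's argument for this lemma essentially step for step: the heart in both is the mirror of Lemma \ref{y1 implies y2} --- at $y_{3}$ one has $g_{a}<C_{4}$ and $g_{a}'\gtrsim1$ from (\ref{tau(y3)}), and the monotonicity of $(2+\rho)k_{a}$ obtained by rederiving (\ref{rho'}) and (\ref{(2+rho)ka}) carries $g_{a}''<0$ through the transition range $\frac{1}{2+\rho(y_{3})}\leq k_{a}\leq\frac{1}{2-\hat{C}H}$ --- followed by a bound on $k_{a}$, Lemma \ref{beta-alpha estimate} for finiteness, Lemmas \ref{closeness} and \ref{local existence} to reach $y_{4}$ with $g_{a}'(y_{4})=0$, and (\ref{tau w.r.t. g^2}) for the final $\tau$ estimate. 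The only divergence is in the bookkeeping of the upper bound: the paper proves $k_{a}\leq3$ by a contradiction argument (so its written proof literally concludes only $g_{a}(y_{4})\leq3$), whereas you use the near-conservation of $\tau>0$ to get $k_{a}\leq1+CH$ and then identify $g_{a}(y_{4})$ as the larger root of $x-x^{2}=\tau(y_{4})$, which has the small advantage of actually delivering the stated range $(\frac{1}{2},1.2)$ (your root selection is best justified by noting $g_{a}(y_{4})=k_{a}(y_{4})\geq\frac{1}{2+\hat{C}H}$, which exceeds the smaller root since $\tau(y_{4})\leq C_{4}^{2}+CH<\frac{1}{4}$).
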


\begin{proof} 

From Lemma \ref{local existence}, $g_{a}$ can be extended beyond
$y_{3}$. As long as $g_{a}$ can be extended and $y>y_{3},g_{a}>0,g'_{a}>0$,
we are going to prove that $g''_{a}<0.$ We know in a small interval
$(y_{3},y_{3}+\delta)$ we have $g_{a}>0,g'_{a}>0$. From (\ref{Ka and ga})
we know, when $y>y_{3}$, as long as the solution could be extended
and $g_{a}>0,g_{a}'>0,k_{a}<\frac{2}{2+\hat{C}H}$, $k_{a}$ will
monotonically increase. If $k_{a}>\frac{1}{2-\hat{C}H}$, we have
\[
g''_{a}=\frac{1}{g}(1+g_{a}'^{2})(1-(2+\rho)k_{a})<0.
\]
We are going to prove that when $y>y_{3},g_{a}>0,g_{a}'>0,\frac{1}{2+\rho(y_{3})}\leq k_{a}\leq\frac{1}{2-\hat{C}H}$
we still have $g''_{a}<0.$ From (\ref{tau(y3)}) and $\frac{1}{2+\rho(y_{3})}\leq k_{a}\leq\frac{1}{2-\hat{C}H}$
we know $g_{a}<C_{4}$ and $g_{a}'\geq1$. Similarly, we can derive
(\ref{rho'}) and (\ref{(2+rho)ka}) again. So we have $g_{a}''(y)<0.$
So when $y>y_{3},g_{a}>0,g_{a}'>0$, we have $g_{a}$ is monotonically
increasing and $g_{a}'$ is monotonically decreasing. So in this interval,
$g_{a}$ can be regarded as a decreasing function of $g_{a}'$. We
claim that, when $y>y_{3}$ and $g_{a}>0,g_{a}'>0$, as long as the
solution could be extended, 
\begin{equation}
k_{a}\leq3.\label{ka upperbound}
\end{equation}
 To see this, by contradiction, we assume $g_{a}$ can be extended
to some $\tilde{y}_{4}>y_{3}$ such that when $y\in(y_{3},\tilde{y}_{4})$
we have $g_{a}(y)>0,g_{a}'(y)>0,$ and $k_{a}(\tilde{y}_{4})>3.$
Then there must be $y_{4}'\in(y_{3},\tilde{y}_{4})$ such that $k_{a}(y_{4}')=3.$
However if we integral (\ref{Ka and ga}) from $y_{4}'$ to $\tilde{y}_{4}$
we can get a contradiction. So we proved that $k_{a}\leq3.$ So we
have $g_{a}\leq3.$ From the monotonicity of $g_{a}$ and $g_{a}'$
and Lemma \ref{closeness}, obviously the solution can be extended
to some $y_{4}>y_{3}$ such that $g_{a}(y_{4})\leq3,g_{a}'(y_{4})=0.$ 

From (\ref{tau w.r.t. g^2}) we know 
\begin{equation}
|\tau(g_{a}(y_{4}),g_{a}'(y_{4}))-\tau(g_{a}(y_{3}),g_{a}'(y_{3}))|\leq CH.\label{tau(y4)-tau(y3)}
\end{equation}

\end{proof}

\begin{lem} \label{tau(y4) estimate}

If (H1) or (H2) happens, there holds 
\[
|\tau(g_{a}(y_{4}),g_{a}(y_{4}))-(a-a^{2})|+|g_{a}(y_{4})-a|\leq CH.
\]

Moreover, if we choose $a\in[0.95,1.05]$ and $H$ very small we have
$g_{a}(y_{4})\in[0.9,1.1]$.

\end{lem}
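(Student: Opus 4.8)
The plan is to obtain the bound on $\tau$ at $y_4$ by adding up the $\tau$-estimates that are already proved along the four monotone arcs $[0,y_1]$, $[y_1,y_2]$, $[y_2,y_3]$, $[y_3,y_4]$, and then to pass from that to the bound on $g_a(y_4)$ by exploiting $g_a'(y_4)=0$.

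First I would assemble the chain. On $(0,y_1)$, i.e.\ before the first inflection (which exists, with $g_a(y_1)<C_4$, by the argument in the proof of Lemma \ref{H3 holds}, using $g_a''(0)<0$ when $a\ge 0.9$ and the near-conservation of $\tau$), we have $g_a'<0$ and $g_a''<0$, so $g_a$ is strictly monotone and (\ref{tau w.r.t. g}) may be integrated; with $|\rho|\le\hat{C}H$ and $g_a$ uniformly bounded this gives $|\tau(g_a(y_1),g_a'(y_1))-(a-a^2)|\le CH$, just as in (\ref{tau(y1)-tau(0)}). On $[y_1,y_2]$, estimate (\ref{tau(y2)-tau(y1)}) together with $g_a(y_1)<C_4<1$ gives $|\tau(g_a(y_2),g_a'(y_2))-\tau(g_a(y_1),g_a'(y_1))|\le CH$. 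On $[y_2,y_3]$, estimate (\ref{tau(y3)}) says $\tau(g_a(y_3),g_a'(y_3))=(1\pm CH)\,\tau(g_a(y_1),g_a'(y_1))$, and since $\tau(g_a(y_1),g_a'(y_1))$ lies in a fixed bounded interval this again yields $|\tau(g_a(y_3),g_a'(y_3))-\tau(g_a(y_1),g_a'(y_1))|\le CH$. On $[y_3,y_4]$, estimate (\ref{tau(y4)-tau(y3)}) gives $|\tau(g_a(y_4),g_a'(y_4))-\tau(g_a(y_3),g_a'(y_3))|\le CH$. Adding the four inequalities, $|\tau(g_a(y_4),g_a'(y_4))-(a-a^2)|\le CH$.

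Next I would convert this into the estimate on $g_a(y_4)$. Since $g_a'(y_4)=0$ we have $\tau(g_a(y_4),g_a'(y_4))=g_a(y_4)-g_a(y_4)^2$, and with $a-a^2=\tau(a,0)$ the algebraic identity $t-t^2-(a-a^2)=-(t-a)\bigl(t-(1-a)\bigr)$ turns the previous bound into $|g_a(y_4)-a|\cdot\bigl|g_a(y_4)-(1-a)\bigr|\le CH$. By Lemma \ref{y3 to y4}, $g_a(y_4)>\frac{1}{2}$, while $1-a\le 0.1$ for $a\ge 0.9$, so $g_a(y_4)-(1-a)>0.4$; dividing gives $|g_a(y_4)-a|\le CH$, which together with the $\tau$-bound is the first assertion. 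For the ``moreover'' claim, if $a\in[0.95,1.05]$ then $g_a(y_4)\in[a-CH,a+CH]\subset[0.9,1.1]$ once $H$ is small enough that $CH<0.05$.

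This lemma is essentially bookkeeping on top of Lemmas \ref{y1 implies y2}--\ref{y3 to y4}, so I do not expect a genuine obstacle; the one point to watch is the initial arc $(0,y_1)$, where one has to be sure that in case (H1)/(H2) the first inflection really occurs with $g_a(y_1)<C_4$ and that $g_a$ is strictly monotone up to it — both of which follow from $g_a''(0)=\frac{1}{a}\bigl(1-(2+\rho(0))a\bigr)<0$ (valid since $a\ge 0.9$) and the near-conservation of $\tau$ used in Lemma \ref{H3 holds}. One should also check that the constants $C$ stay uniform in $a,p,\lambda,H$ along the chain, which holds because every input bound is uniform and $\tau(g_a(y_1),g_a'(y_1))$ and the $g_a(y_i)$ all remain in fixed compact sets.
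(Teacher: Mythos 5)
Your proposal is correct and follows the paper's own route: the paper's proof is the one-line observation that (\ref{tau(y1)-tau(0)}), (\ref{tau(y3)}) and (\ref{tau(y4)-tau(y3)}) combine to give the conclusion, which is exactly your chain of $\tau$-estimates. Your extra details (the factorization $t-t^{2}-(a-a^{2})=-(t-a)(t-(1-a))$ together with $g_{a}(y_{4})>\frac{1}{2}$ from Lemma \ref{y3 to y4}, and the check on the initial arc up to $y_{1}$) simply spell out what the paper leaves as ``easily.''
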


\begin{proof}From (\ref{tau(y1)-tau(0)})(\ref{tau(y3)})(\ref{tau(y4)-tau(y3)})
we get the conclusion easily.

\end{proof}

\begin{lem}\label{y4 to y5} If (H1) happens, $g_{a}$ can be extended
to $y_{5}>y_{4}$ such that $g_{a}(y_{5})>0,g_{a}'(y_{5})<0,g_{a}''(y_{5})=0$
and 
\[
|\tau(g_{a}(y_{5}),g_{a}'(y_{5}))-\tau(g_{a}(y_{4}),g_{a}'(y_{4}))|\leq CH.
\]
 If (H2) happens, $g_{a}$ can be extended to $y_{5}>y_{4}$ such
that 
\begin{align*}
\lim_{y\rightarrow y_{5}^{-}}g_{a} & \geq0,\\
\lim_{y\rightarrow y_{5}^{-}}g_{a}' & =-\infty.
\end{align*}
In either case, $g_{a}$ and $g_{a}'$ are monotonically decreasing
in $(y_{4},y_{5}).$

\end{lem}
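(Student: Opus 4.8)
The plan is to recognize that at $y=y_4$ the orbit sits in the same type of configuration it was in at $y=0$, and then repeat the first-cycle analysis. By Lemma~\ref{tau(y4) estimate} we have $g_a'(y_4)=0$ and $a_4:=g_a(y_4)\in[0.9,1.1]$, with $\tau(g_a(y_4),g_a'(y_4))=a_4-a_4^2$ lying within $CH$ of $a-a^2$; note in particular $a_4-a_4^2\le 0.09$ since $a_4\ge 0.9$. The whole forward orbit from $y_4$ lives at $y\ge y_4>0$, so $d=\sqrt{y^2+g_a^2}\ge y_4$ is bounded below and hence (\ref{phi' estimate}), $|\rho|\le\hat{C}H$ and $|\rho'|\le CH(1+|g_a'|)$ (cf.\ (\ref{rho'})) persist with no new input; moreover, by (\ref{tau w.r.t. g^2}) with $|\rho|\le\hat{C}H$ and $g_a$ bounded, $\tau$ drifts only by $O(H)$ along the orbit. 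Thus $y_4$ plays exactly the role the origin played in Lemmas~\ref{y1 implies y2} and~\ref{H3 holds}.

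Since $a_4\ge 0.9>\tfrac{1}{2-\hat{C}H}$ for $H$ small, (\ref{g'' and k}) gives $g_a''(y_4)<0$, so by Lemma~\ref{local existence} the solution extends past $y_4$ with $g_a>0$, $g_a'<0$ and $g_a,g_a'$ both decreasing. By the monotonicity argument of Lemma~\ref{y1 implies y2} — which uses (\ref{Ka and ga}) and (\ref{rho'})--(\ref{(2+rho)ka}) and needs only that $g_a\le C_4$ once $k_a$ enters the band $[\tfrac{1}{2+\hat{C}H},\tfrac{1}{2-\hat{C}H}]$, verified below — once $g_a''$ vanishes it stays positive as long as $g_a>0,g_a'<0$. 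So exactly one of the following occurs: \emph{(a)} $g_a''$ vanishes at a first point $y_5>y_4$, with $g_a>0$, $g_a'<0$, $g_a''<0$ on $(y_4,y_5)$; or \emph{(b)} $g_a''<0$ throughout the maximal forward interval $(y_4,y_5)$ past $y_4$, with $g_a>0,g_a'<0$ there. In both cases $g_a$ and $g_a'$ are monotonically decreasing on $(y_4,y_5)$, which is the final assertion of the lemma.

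Next I would match the cases with (H1),(H2); since we are past $y_4$, case (H3) is excluded. If (H1) holds the orbit reaches a second point with $g_a\in(0,\tfrac12)$, $g_a'=0$ beyond $y_4$, and since $g_a'(y_4)=0$ with $g_a'$ forced to return to $0$, $g_a''$ vanishes in between, so case (a) holds. Conversely, in case (a): $g_a''(y_5)=0$ gives $k_a(y_5)=\tfrac{1}{2+\rho(y_5)}$, whence $\tau(g_a(y_5),g_a'(y_5))=g_a(y_5)^2(1+\rho(y_5))$, which is within $CH$ of $a_4-a_4^2\le 0.09$; hence $g_a(y_5)^2\le 0.09+CH$, so $g_a(y_5)<C_4$ for $H$ small ($C_4$ from Lemma~\ref{y1 implies y2}), $g_a'(y_5)<0$ by monotonicity, and $g_a(y_5)>0$ since, exactly as in the proof of Lemma~\ref{H3 holds}, $\tau$ staying close to $a_4-a_4^2$ forces a positive lower bound on $g_a$, so Lemma~\ref{closeness} lets the solution genuinely reach $y_5$. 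Then Lemma~\ref{y1 implies y2} at $y_5$ produces a second point with $g_a\in(0,\tfrac12)$, $g_a'=0$, so (H1) holds; and $|\tau(g_a(y_5),g_a'(y_5))-\tau(g_a(y_4),g_a'(y_4))|\le CH$ by integrating (\ref{tau w.r.t. g^2}) over $g_a^2\in[g_a(y_5)^2,g_a(y_4)^2]\subset[0,1.21]$ with $|\rho|\le\hat{C}H$.

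Consequently, if (H1) fails then (H2) holds and case (a) is impossible (it would give a second point with $g_a\in(0,\tfrac12)$, $g_a'=0$, contradicting (H2)), so case (b) holds; and in case (b) the orbit never returns to $g_a'=0$ after $y_4$, confirming (H2). To see $y_5<+\infty$ in case (b): when $-\tfrac12<g_a'<0$ we have $|k_a-\tfrac12|=\tfrac12\sqrt{1-4\tau(1+g_a'^2)}\ge\tfrac12\sqrt{0.55-CH}$ (from $\tau\le 0.09+CH$ and $g_a'^2\le\tfrac14$), so $k_a$ is away from $\tfrac12$ there, and splitting $(y_4,y_5)$ at $g_a'=-\tfrac12$ and applying Lemma~\ref{beta-alpha estimate} on the two pieces (using $|g_a'|>\tfrac12$ on one and $k_a$ away from $\tfrac12$ on the other) gives finiteness. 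By monotonicity $\lim_{y\rightarrow y_5^-}g_a\ge0$ and $\lim_{y\rightarrow y_5^-}g_a'$ exist; the latter must be $-\infty$, for if it were finite with $\lim g_a=0$ then $k_a\rightarrow 0$ would make $g_a''>0$ near $y_5$, contradicting (b), while if it were finite with $\lim g_a>0$ then Lemmas~\ref{closeness} and~\ref{local existence} would extend the solution past $y_5$, contradicting maximality. I expect the only real difficulty to be bookkeeping: checking that every quantitative input of the first cycle — the $O(H)$ control of $\rho,\rho'$, the $C_4$-smallness of $g_a$ in the critical band, and $k_a$ staying away from $\tfrac12$ near $g_a'=0$ — still applies; all of these reduce to the $O(H)$ drift of $\tau$ (Lemma~\ref{tau(y4) estimate}) together with the trivial lower bound $d\ge y_4$, so no genuinely new estimate is needed.
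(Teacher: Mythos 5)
Your argument is correct and is essentially the paper's own approach: the paper disposes of this lemma with the single remark that it ``can be proved in the same way as Lemma \ref{H3 holds}'', and your proposal is exactly that repetition of the Lemma \ref{y1 implies y2}/Lemma \ref{H3 holds} analysis restarted at $y_4$ (using $d\ge y_4$, the $O(H)$ drift of $\tau$ via (\ref{tau w.r.t. g^2}) as in Lemma \ref{y3 to y4}, and the same splitting of the orbit at $g_a'=-\tfrac12$ for finiteness of $y_5$). The extra bookkeeping you supply (matching cases (a)/(b) with (H1)/(H2), the bound $g_a(y_5)<C_4$, the blow-up alternative at $y_5$) is exactly what the paper leaves implicit, and it checks out.
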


\begin{proof} This lemma can be proved in the same way as Lemma \ref{H3 holds}.

\end{proof}

\section{Analysis of the singularities}

Define a subset $E(H)$ of $[0,95,1.05]$ which includes all the $a$
values such that (H1) happens to $g_{a}$. And denote $\bar{E}(H)=\{a\in[0.95,1.05]:a\notin E(H)\}.$
Obviously, we can choose $H$ sufficiently small such that $0.95\in E(H),1.05\in\bar{E}(H).$
And from the continuous dependence of the solution of ODE on its initial
values, $E(H)$ is an open subset of $[0,95,1.05].$ Let
\[
a'(H)=\inf\bar{E}(H).
\]
We have $a'(H)\in\bar{E}(H)$ and $a'(H)>0.95$ for small $H$. 

\begin{lem}\label{non compact}
\[
\lim_{a\rightarrow a'(H)^{-}}\max\{|g_{a}'(y_{1})|,|g_{a}'(y_{5})|\}=+\infty.
\]

\end{lem}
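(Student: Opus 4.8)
The plan is a proof by contradiction: I would show that a uniform bound on $|g_a'(y_1)|$ and $|g_a'(y_5)|$ along a sequence $a_n\uparrow a'(H)$ forces the orbits $(g_{a_n},g_{a_n}')$ to stay in a fixed compact subset of $\{g>0\}$ up to time $y_5$, so that the limiting solution at $a=a'(H)$ is again of type (H1), contradicting $a'(H)\in\bar E(H)$.

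So suppose the conclusion fails: there are $M<\infty$ and $a_n\uparrow a'(H)$ with $\max\{|g_{a_n}'(y_1)|,|g_{a_n}'(y_5)|\}\le M$; since $a_n<a'(H)=\inf\bar E(H)$, each $g_{a_n}$ is of type (H1), so the points $y_1(a_n)<\dots<y_5(a_n)$ are defined with finite derivatives. At the inflection points $y_1,y_5$ one has $g''=0$, hence $k_{a_n}=g_{a_n}\sqrt{1+(g_{a_n}')^2}=\tfrac1{2+\rho}=\tfrac12+O(H)$ there, which together with $|g_{a_n}'|\le M$ gives $g_{a_n}(y_1),g_{a_n}(y_5)\ge c_0>0$ for a uniform $c_0$. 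I would then propagate these lower bounds through the $\tau$-estimates of Lemmas \ref{H3 holds}, \ref{y2 to y3}, \ref{y3 to y4}, \ref{tau(y4) estimate} and \ref{y4 to y5} (which control $g_{a_n}(y_2),g_{a_n}(y_3),g_{a_n}(y_4)$ in terms of $g_{a_n}(y_1)$) together with the monotonicity of $g_{a_n},g_{a_n}'$ on each of $[0,y_1],[y_1,y_2],\dots,[y_4,y_5]$, to conclude that on all of $[0,y_5(a_n)]$ the orbit lies in a fixed box $[c_1,1.2]\times[-M',M']$ (with $c_1>0$, $M'$ uniform) and $|g_{a_n}''|$ is uniformly bounded. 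Using Lemma \ref{beta-alpha estimate} piecewise exactly as in the proofs of Lemmas \ref{y1 implies y2}--\ref{y4 to y5} (splitting each subinterval into a part where $|g'|\ge\tfrac12$ and a part where $k_a-\tfrac12$ is bounded away from $0$), each interval length $y_{i+1}(a_n)-y_i(a_n)$, and $y_1(a_n)$ itself, is bounded above and below by uniform positive constants; hence $0<y_1(a_n)<\dots<y_5(a_n)\le Y$ with $Y$ uniform.

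Passing to a subsequence I may assume $y_i(a_n)\to y_i^*$ with $0<y_1^*<\dots<y_5^*<\infty$. By the uniform $C^2$ bounds and continuous dependence on the initial datum $a$ (for the fixed $H$, in the region $g\ge c_1>0$, $y\ge y_1^*/2>0$, where $\rho$ depends smoothly on $(g,g',y)$), $g_{a_n}\to g_{a'(H)}$ in $C^1_{\mathrm{loc}}$ on $[0,y_5^*)$, and by Lemma \ref{closeness} the solution $g_{a'(H)}$ extends to $[0,y_5^*]$ with $g_{a'(H)}(y_i^*)=\lim_n g_{a_n}(y_i(a_n))$ and similarly for the first two derivatives; in particular $g_{a'(H)}''(y_5^*)=0$ and $g_{a'(H)}(y_5^*)\ge c_0>0$. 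Moreover $g_{a'(H)}'(y_5^*)\neq0$: otherwise $g_{a'(H)}(y_5^*)=\tfrac1{2+\rho(y_5^*)}\approx\tfrac12>C_4$, while the chain of $\tau$-estimates gives $g_{a'(H)}(y_5^*)^2=\tau(y_5^*)+O(H)=\bigl(a'(H)-a'(H)^2\bigr)+O(H)<C_4^2$ for $H$ small (recall $a'(H)\in[0.95,1.05]$). Hence $0<g_{a'(H)}(y_5^*)<C_4$ and $g_{a'(H)}'(y_5^*)<0$, while also $g_{a'(H)}(y_2^*)\in(0,\tfrac12)$ with $g_{a'(H)}'(y_2^*)=0$.

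Finally, by Lemma \ref{local existence} the solution $g_{a'(H)}$ extends beyond $y_5^*$, and applying Lemma \ref{y1 implies y2} with $y_5^*$ in the role of ``$y_1$'' produces $y_6^*>y_5^*$ with $g_{a'(H)}(y_6^*)\in(0,\tfrac12)$ and $g_{a'(H)}'(y_6^*)=0$. Thus $g_{a'(H)}$ can be extended through the two points $y_2^*$ and $y_6^*$ at which $g\in(0,\tfrac12)$ and $g'=0$, i.e.\ (H1) holds for $a=a'(H)$, so $a'(H)\in E(H)$ --- contradicting $a'(H)\in\bar E(H)$. This contradiction proves the lemma. The main obstacle is the compactness step in the second paragraph: converting the bound on $|g_a'|$ at the necks into the non-pinching estimate $g_{a_n}\ge c_1>0$ on $[0,y_5]$ (which rests on $k_a$ being pinned near $\tfrac12$ at the inflections) and on the uniform two-sided bounds on the traversal times.
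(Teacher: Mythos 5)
Your proposal is correct and follows essentially the same route as the paper's proof: assume a uniform bound along $a_n\uparrow a'(H)$, use the $\tau$-estimates, monotonicity and Lemma \ref{beta-alpha estimate} to trap the orbits in a fixed region $D(A_1,\dots,A_4)$ with uniformly bounded traversal times, pass to a limit identified with $g_{a'(H)}$, and then invoke Lemma \ref{y1 implies y2} at the limiting inflection point to conclude (H1) holds for $a'(H)$, contradicting $a'(H)\in\bar E(H)$. The only differences are bookkeeping (the paper secures $C^{2}$ convergence up to $y_5^*$ via uniform $C^{3}$ bounds obtained by differentiating the ODE, while you use $C^{2}$ bounds plus Lemmas \ref{closeness} and \ref{local existence}, and you spell out the verification that $0<g_{a'(H)}(y_5^*)<C_4$ and $g_{a'(H)}'(y_5^*)<0$, which the paper leaves implicit).
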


\begin{proof} If it were not true, we could find a sequence $a_{n}$
which tends to $a'(H)^{-}$ such that for any $n$ 
\[
\max\{|g_{a_{n}}'(y_{1})|,|g_{a_{n}}'(y_{5})|\}\leq C(H).
\]
 So $(g_{a_{n}},g_{a_{n}}')\in D(A_{1}(H),A_{2}(H),A_{3}(H),A_{4}(H))$
from Lemma \ref{y2 to y3}, \ref{y3 to y4}, \ref{tau(y4) estimate},
\ref{y4 to y5}. So 
\begin{equation}
\|g_{a_{n}}(y)\|_{C_{y}^{2}([0,y_{5}])}\leq C(H).\label{C2 estimate for g}
\end{equation}
Note that $y_{i},i=1,\cdots,5$ depend on $a$ and $H.$ Set $y_{0}=0$.
We are going to prove that there is $C(H)>0$ such that 
\begin{equation}
C(H)^{-1}\leq y_{i}-y_{i-1}\leq C(H).\label{yi-y(i-1) estimates}
\end{equation}
We only prove this for $y_{1}-y_{0}$ and $y_{2}-y_{1}$. The rest
inequalities follow similarly. It is easy to see that $k_{a}<3$ and
from the same argument as in the proof of Lemma \ref{H3 holds} we
see when $-\frac{1}{2}<g_{a}'\leq0$ on both sides of $y_{1}$, $k_{a}-\frac{1}{2}$
does not change sign and is bounded away from $0.$ By dividing the
orbit into $-\frac{1}{2}<g'_{a}\leq0$ and $g'_{a}\leq-\frac{1}{2}$,
we can apply Lemma \ref{beta-alpha estimate} to get (\ref{yi-y(i-1) estimates}).

By passing to a subsequence, we may assume that for fixed $H$, when
$a_{n}\rightarrow a'(H)^{-}$ we have $y_{i}\rightarrow y_{i}^{*}$
for $i=1,\cdots,5$ and 
\[
C(H)^{-1}\leq y_{i}^{*}-y_{i-1}^{*}\leq C(H).
\]
Differentiate the ODE (\ref{ODE with initial value}) with respect
to $y$ once. From (\ref{C2 estimate for g}) and (\ref{rho'}) we
know
\[
\|g_{a_{n}}\|_{C_{y}^{3}([0,y_{5}])}\leq C(H).
\]
So $g_{a_{n}}$ converges to some $\tilde{g}(y),y\in[0,y_{5}^{*}-\delta]$
in $C^{2}$ for any small $\delta>0.$ We know $\tilde{g}(y)$ solves
ODE (\ref{ODE with initial value}) with $a=a'(H).$ From the uniqueness
of the ODE, we know $g_{a'(H)}(y)=\tilde{g}(y),y\in[0,y_{5}^{*}).$
By uniform $C^{3}$ estimate of $g_{a_{n}}$, we know $g_{a'(H)}$
can be extended to $y=y_{5}^{*}$ and $g''_{a'(H)}(y_{5}^{*})=0$
and in each interval $[y_{i-1}^{*},y_{i}^{*}]$ it has the same monotonicity
as $g_{a}$ in $[y_{i-1},y_{i}].$ Then from the proof of Lemma \ref{y1 implies y2},
we know (H1) happens to $g_{a'(H)}$ which is a contradiction. So
we proved this lemma. 

\end{proof}

So there is $\delta_{1}(H)>0$ which depends on $H$ such that when
$-\delta_{1}(H)<a-a'(H)<0$, 
\[
\max\{|g_{a}'(y_{1})|,|g_{a}'(y_{5})|\}\geq\frac{1}{\sqrt{H}}.
\]

\begin{lem} \label{y5 estimate} There is $C>0$ which does not depend
on $a,H$, such that when $-\delta_{1}(H)<a-a'(H)<0$,
\[
\min\{|g_{a}'(y_{1})|,|g_{a}'(y_{5})|\}\geq\frac{C}{\sqrt{H}}.
\]

\end{lem}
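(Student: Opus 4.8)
The plan is to use the Delaunay parameter $\tau$ as a bridge: the displayed estimate preceding the lemma (which is the content of Lemma \ref{non compact}) gives a lower bound for the \emph{larger} of $|g_a'(y_1)|,|g_a'(y_5)|$, and I want to transfer it to the smaller one. Note first that for every $a$ with $-\delta_1(H)<a-a'(H)<0$ case (H1) occurs, since such $a$ lie in $E(H)$ (they are $<a'(H)=\inf\bar E(H)$); hence $y_1$ and $y_5$ are genuine inflection points, $g_a''(y_1)=g_a''(y_5)=0$, with $g_a'(y_1),g_a'(y_5)$ finite.

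The key is an exact identity at an inflection point. If $g_a''(y)=0$, then (\ref{g'' and k}) forces $(2+\rho(y))k_a(y)=1$, so $k_a(y)=\frac{1}{2+\rho(y)}$; combining this with $g_a(y)=k_a(y)/\sqrt{1+g_a'(y)^2}$ and the definition of $\tau$ gives
\[
\tau(g_a(y),g_a'(y))=\frac{k_a(y)\bigl(1-k_a(y)\bigr)}{1+g_a'(y)^2}=\frac{1+\rho(y)}{\bigl(2+\rho(y)\bigr)^2\bigl(1+g_a'(y)^2\bigr)}.
\]
Since $|\rho|\le\hat CH$, for $H$ small the factor $\frac{1+\rho}{(2+\rho)^2}$ stays in a fixed interval $[c_1,c_2]$ with $0<c_1\le c_2$ (it is close to $\frac14$). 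Thus at $y=y_1$ and $y=y_5$ one has $c_1\le(1+g_a'(y)^2)\,\tau(g_a(y),g_a'(y))\le c_2$; in particular $\tau$ is strictly positive at both points.

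Next I would chain the increments of $\tau$ already established: $|\tau(y_3)-\tau(y_1)|\le CH\,\tau(y_1)\le CH$ by (\ref{tau(y3)}), $|\tau(y_4)-\tau(y_3)|\le CH$ by (\ref{tau(y4)-tau(y3)}), and $|\tau(y_5)-\tau(y_4)|\le CH$ by Lemma \ref{y4 to y5}; adding these gives $|\tau(g_a(y_5),g_a'(y_5))-\tau(g_a(y_1),g_a'(y_1))|\le C_5H$ with $C_5$ independent of $a$ and $H$ (equivalently, Lemma \ref{tau(y4) estimate} already puts each of $\tau(y_1),\tau(y_5)$ within $CH$ of $a-a^2$). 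Now apply the property of $\delta_1(H)$: $\max\{|g_a'(y_1)|,|g_a'(y_5)|\}\ge\frac{1}{\sqrt H}$, so for the index $j$ realizing the maximum the identity gives $\tau(g_a(y_j),g_a'(y_j))\le c_2H$, and the chained bound then forces $\tau\le(c_2+C_5)H=:C_6H$ at \emph{both} $y_1$ and $y_5$. Feeding this back into the lower estimate in the identity, $1+g_a'(y_i)^2\ge c_1/(C_6H)$ for $i=1,5$, so once $H<c_1/(2C_6)$ we obtain $\min\{|g_a'(y_1)|,|g_a'(y_5)|\}\ge\sqrt{c_1/(2C_6)}\cdot\frac{1}{\sqrt H}$, i.e. the claim with $C=\sqrt{c_1/(2C_6)}$.

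The computation is short once the inflection identity is available, so I do not expect a serious obstacle. The one point requiring care is uniformity: every constant used — those in the $\tau$-increment bounds of Lemmas \ref{y2 to y3}, \ref{y3 to y4}, \ref{y4 to y5} and the two-sided bound on $\frac{1+\rho}{(2+\rho)^2}$ — must be independent of $a$ and $H$, so that the final $C$ is too; and the positivity together with the uniform upper bound of $\tau$ at the inflection points (which is precisely what the identity delivers) must hold uniformly for all $a$ in the window $-\delta_1(H)<a-a'(H)<0$.
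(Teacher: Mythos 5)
Your argument is correct and is essentially the paper's own proof: the inflection-point identity $\tau=\frac{1+\rho}{(2+\rho)^{2}(1+g_a'^{2})}$ is exactly the paper's use of $k_a(y_i)\in[\frac{1}{2+\hat CH},\frac{1}{2-\hat CH}]$ at $y_1,y_5$, and the transfer of the $O(H)$ bound on $\tau$ from the endpoint realizing the maximum to the other endpoint via Lemmas \ref{y2 to y3}, \ref{y3 to y4}, \ref{y4 to y5} is the same chain the paper runs. Your explicit check that (H1) holds in the window (so $y_5$ is a genuine inflection point) is a small completeness point the paper leaves implicit.
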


\begin{proof}Let's first assume 
\[
\frac{1}{\sqrt{H}}\leq\max\{|g_{a}'(y_{1})|,|g_{a}'(y_{5})|\}=|g_{a}'(y_{1})|.
\]
From Lemma \ref{y2 to y3} and $g_{a}(y_{1})\sqrt{1+g_{a}'(y_{1})^{2}}\in[\frac{1}{2+\hat{C}H},\frac{1}{2-\hat{C}H}]$
we know there is a uniform $C>0$ such that 
\begin{equation}
\tau(g_{a}(y_{1}),g_{a}'(y_{1})),\tau(g_{a}(y_{3}),g_{a}'(y_{3}))\in[C^{-1}H,CH].\label{tau(y3) estimate}
\end{equation}
From Lemma \ref{y3 to y4} and Lemma \ref{y4 to y5}, we know 
\[
0<\tau(g_{a}(y_{5}),g_{a}'(y_{5}))\leq CH.
\]
Combining this with $g_{a}(y_{5})\sqrt{1+g_{a}'(y_{5})^{2}}\in[\frac{1}{2+\hat{C}H},\frac{1}{2-\hat{C}H}]$,
we know there is $C>0$ such that 
\[
|g_{a}'(y_{5})|\geq\frac{C}{\sqrt{H}}.
\]
If 
\[
\max\{|g_{a}'(y_{1})|,|g_{a}'(y_{5})|\}=|g_{a}'(y_{5})|
\]
we can prove the lemma similarly.

\end{proof}

Let $h(y)=\sqrt{1-(y-y_{4})^{2}},y\in[y_{4}-1,y_{4}+1].$ $h(y)$
solves the following ODE
\[
\begin{cases}
h''(y)-\frac{1}{h(y)}(1+h'(y)^{2})+2(1+h'(y)^{2})^{\frac{3}{2}}=0,\\
h(y_{4})=1,\\
h'(y_{4})=0.
\end{cases}
\]
There is exactly one $y_{6}\in(y_{3},y_{4})$ such that $g_{a}'(y_{6})=1.$
Define two continuous maps 
\begin{align*}
\Phi_{1}:[y_{6},y_{4}] & \rightarrow\mathbb{R},\\
\Phi_{2}:[y_{3},y_{6}] & \rightarrow\mathbb{R}
\end{align*}
 such that $g'_{a}(y)=h'(\Phi_{1}(y))$, $\Phi_{1}(y_{4})=y_{4}$
and $g_{a}(y)=h(\Phi_{2}(y))$, $\Phi_{2}(y_{6})\in(y_{4}-1,y_{4}).$
We use a graph below to illustrate the definition of $\Phi_{1},\Phi_{2}.$
In this graph, the horizontal direction represents $g_{a}'(y)$ or
$h'(y)$ and the vertical direction represents $g_{a}(y)$ or $h(y).$

\includegraphics[scale=0.48]{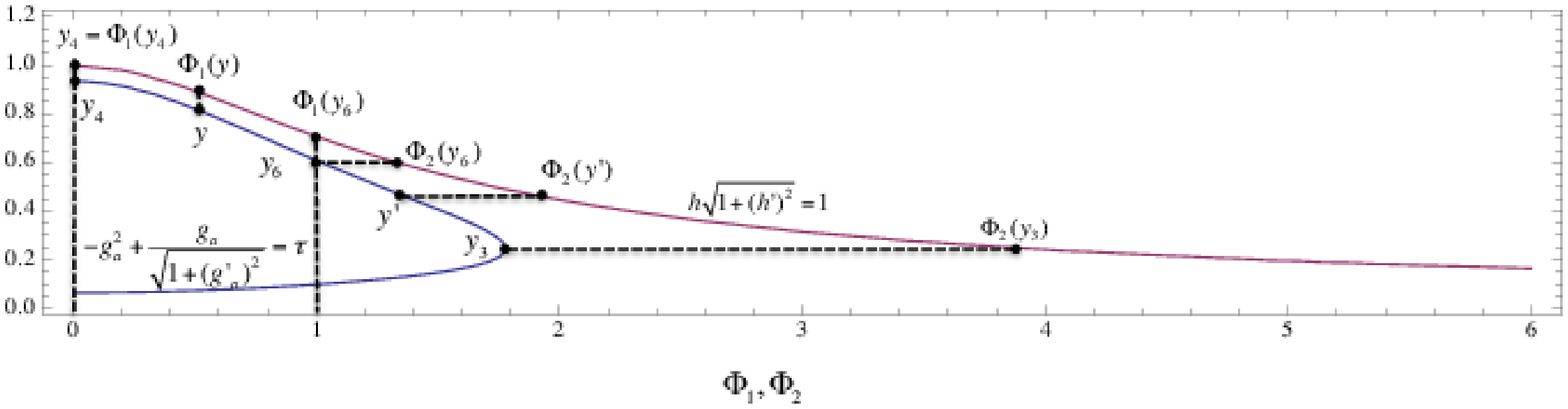}

We have the following estimates,

\begin{lem}\label{Phi1,Phi2} There is $C>0$ such that when $H$
is very small and $-\delta_{1}(H)<a-a'(H)<0$, 
\begin{align*}
|\Phi_{1}(y)-y| & \leq CH,\\
|\Phi_{2}(y)-y| & \leq CH(\frac{H}{g_{a}^{2}(y)}+|\log g_{a}(y)|).
\end{align*}

\end{lem}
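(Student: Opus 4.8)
The plan is to compare the arc of $g_a$ over $[y_3,y_4]$ with the unit spherical cap $h$, exploiting that $h$ solves the unperturbed equation with Delaunay parameter $\tau\equiv0$ and $k_h:=h\sqrt{1+h'^2}\equiv1$, so that $h=(1+h'^2)^{-1/2}$ and $h''=-\tfrac1h(1+h'^2)$. First I would collect the a priori facts valid for $-\delta_{1}(H)<a-a'(H)<0$. Since $g_a''(y_1)=0$ we have $k_a(y_1)=1/(2+\rho(y_1))$, and $|g_a'(y_1)|\ge C/\sqrt H$ (Lemma~\ref{y5 estimate}) gives $g_a(y_1)\le C\sqrt H$, hence $\tau(g_a(y_1),g_a'(y_1))=g_a(y_1)^2(\tfrac1{k_a(y_1)}-1)=O(H)$; combined with $\frac{d\tau}{dg_a^2}=\frac12\rho$ (see (\ref{tau w.r.t. g^2})) and the orbit structure on $[y_1,y_4]$ (Lemmas~\ref{y2 to y3}, \ref{y3 to y4}) this gives $|\tau(g_a,g_a')|\le CH$ on $[y_3,y_4]$, so $g_a(y_4)(1-g_a(y_4))=\tau(g_a(y_4),0)=O(H)$ together with $g_a(y_4)\in(\tfrac12,1.2)$ forces $g_a(y_4)=1+O(H)$. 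On $[y_3,y_4]$, by Lemma~\ref{y3 to y4}, $g_a$ increases, $g_a'$ decreases and $g_a''<0$, so by (\ref{g'' and k}), $g_a''=\tfrac1{g_a}(1+g_a'^2)(1-(2+\rho)k_a)$, we get $k_a>1/(2+\hat{C}H)$; since $g_a'(y_6)=1$ this gives $g_a'\ge1$ on $[y_3,y_6]$ and $g_a(y_6)=k_a(y_6)/\sqrt2\ge\tfrac1{3\sqrt2}$, whence $g_a(y_6)(\tfrac1{\sqrt2}-g_a(y_6))=\tau(g_a(y_6),1)=O(H)$ yields $g_a(y_6)=\tfrac1{\sqrt2}+O(H)$. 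On $[y_6,y_4]$, $g_a\ge g_a(y_6)\ge\tfrac12$, so $|1-\tfrac1{k_a}|=|\tau|/g_a^2\le CH$, i.e.\ $k_a=1+O(H)$.

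For $\Phi_1$: differentiating $g_a'(y)=h'(\Phi_1(y))$, inserting (\ref{g'' and k}) and $h''=-\tfrac1h(1+h'^2)$, and cancelling the common factor $1+g_a'^2=1+h'(\Phi_1)^2$, one gets the identity
\[
\frac{d\Phi_1}{dy}=\frac{h(\Phi_1)}{g_a}\big((2+\rho)k_a-1\big)=\frac1{k_a}\big((2+\rho)k_a-1\big)=2+\rho-\frac1{k_a},
\]
using $h(\Phi_1(y))=(1+g_a'(y)^2)^{-1/2}$ and $g_a\sqrt{1+g_a'^2}=k_a$. Since $|\rho|\le\hat{C}H$ and $\tfrac1{k_a}=1+O(H)$ on $[y_6,y_4]$, this gives $|\tfrac{d\Phi_1}{dy}-1|\le CH$; as $\Phi_1(y_4)=y_4$ and $y_4-y_6=O(1)$ by Lemma~\ref{beta-alpha estimate}(2) (with $k_a-\tfrac12$ bounded away from $0$ on $[y_6,y_4]$), integrating from $y_4$ yields $|\Phi_1(y)-y|\le CH$.

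For $\Phi_2$: on $[y_3,y_6]$, $g_a'>0$ and $g_a'=\frac{\sqrt{k_a^2-g_a^2}}{g_a}$ (from $1+g_a'^2=k_a^2/g_a^2$); differentiating $g_a(y)=h(\Phi_2(y))$ with $h'(\Phi_2)=\frac{\sqrt{1-g_a^2}}{g_a}$ (the branch $\Phi_2<y_4$) gives
\[
\frac{d\Phi_2}{dy}=\frac{\sqrt{k_a^2-g_a^2}}{\sqrt{1-g_a^2}},\qquad \frac{d\Phi_2}{dy}-1=\frac{k_a^2-1}{\sqrt{1-g_a^2}\,\big(\sqrt{k_a^2-g_a^2}+\sqrt{1-g_a^2}\big)}.
\]
On $[y_3,y_6]$, $g_a'\ge1$ gives $k_a^2=g_a^2(1+g_a'^2)\ge2g_a^2$, so $g_a\le g_a(y_6)\le\tfrac1{\sqrt2}+O(H)$ keeps $1-g_a^2$ bounded below and $k_a^2-g_a^2\ge k_a^2/2\ge\tfrac1{2(2+\hat{C}H)^2}$ bounded below; also $k_a\le3$ by (\ref{ka upperbound}), so $|k_a^2-1|\le4|1-k_a|=4|\tau|/(g_a^2+\tau)\le C\min(H/g_a^2,1)$ (using $g_a^2+\tau=g_a^2/k_a$). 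Hence $|\tfrac{d\Phi_2}{dy}-1|\le C\min(H/g_a^2,1)$ and $\tfrac1{g_a'}=\tfrac{g_a}{\sqrt{k_a^2-g_a^2}}\le Cg_a$, so that changing variables $ds=dg_a/g_a'$,
\[
\int_y^{y_6}\Big|\frac{d\Phi_2}{ds}-1\Big|\,ds\le C\int_{g_a(y)}^{g_a(y_6)}\min\!\Big(\frac H{g_a},g_a\Big)\,dg_a\le CH\big(1+|\log g_a(y)|\big).
\]
Together with $|\Phi_2(y_6)-y_6|\le CH$ — which holds because $h(\Phi_2(y_6))=g_a(y_6)=\tfrac1{\sqrt2}+O(H)=h(\Phi_1(y_6))+O(H)$ and $|\Phi_1(y_6)-y_6|\le CH$ by the $\Phi_1$ estimate — this gives $|\Phi_2(y)-y|\le CH(1+|\log g_a(y)|)$, which in particular implies the asserted bound $CH\big(\tfrac H{g_a^2(y)}+|\log g_a(y)|\big)$.

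The step I expect to be the main obstacle is making rigorous that, near $a'(H)$, the arc of $g_a$ over $[y_3,y_4]$ really is a perturbed unit spherical cap — concretely $g_a(y_4)=1+O(H)$, $g_a(y_6)=\tfrac1{\sqrt2}+O(H)$, and especially the uniform lower bound $k_a^2-g_a^2\ge c_0>0$ on $[y_3,y_6]$ (equivalently, that $g_ag_a'$ stays away from $0$). Without that lower bound the change of variables produces only the far weaker $H/g_a(y)$; the logarithmic factor is exactly what it buys.
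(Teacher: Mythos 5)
Your proposal is correct and follows essentially the same route as the paper: the same computation of $\frac{d\Phi_1}{dy}$ (your simplification $2+\rho-\frac{1}{k_a}$ is exactly the paper's ratio of the two second-derivative expressions) integrated from $y_4$, and for $\Phi_2$ the same comparison of $\frac{dy}{dg_a}$ along the two curves in the variable $g_a$, anchored at $y_6$ through $\Phi_1$, using the same ingredients ($|\tau|\leq CH$, $k_a\geq\frac{1}{2+\hat{C}H}$, $g_a'\geq 1$ on $[y_3,y_6]$, $k_a\leq 3$, $1-g_a^2\geq\frac13$). The only difference is slightly sharper bookkeeping of the integrand ($C\min(H/g_a,g_a)$ instead of the paper's $C(H^2/g_a^3+H/g_a)$), which yields $CH(1+|\log g_a|)$ and hence, since $1\leq C|\log g_a|$ on this range, the stated bound.
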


\begin{proof}
\begin{align*}
\frac{d\Phi_{1}(y)}{dy} & =\frac{dg_{a}'(y)}{dy}\frac{d\Phi_{1}(y)}{dh'(\Phi_{1}(y))}\\
 & =\frac{\frac{1}{g_{a}}(1+g_{a}'^{2})-(2+\rho)(1+g_{a}'^{2})^{\frac{3}{2}}|_{y}}{\frac{1}{h}(1+h'^{2})-2(1+h'^{2})^{\frac{3}{2}}|_{\Phi_{1}(y)}}.
\end{align*}
 When $-\delta_{1}(H)<a-a'(H)<0$, $|\tau(g_{a}(y),g_{a}'(y))|\leq CH,y\in[0,y_{5}]$.
So we know when $y\in[y_{6},y_{4}]$, $g_{a}(y)\in[h(\Phi_{1}(y))(1-CH),h(\Phi_{1}(y))(1+CH)]$.
Hence, 
\[
|\frac{\frac{1}{g_{a}}(1+g_{a}'^{2})-(2+\rho)(1+g_{a}'^{2})^{\frac{3}{2}}|_{y}}{\frac{1}{h}(1+h'^{2})-2(1+h'^{2})^{\frac{3}{2}}|_{\Phi_{1}(y)}}-1|\leq CH.
\]
So the first estimate follows. For the second inequality, first we
notice 
\[
|h(\Phi_{1}(y_{6}))-h(\Phi_{2}(y_{6}))|=|h(\Phi_{1}(y_{6}))-g_{a}(y_{6})|\leq CH.
\]
From $h'(\Phi_{1}(y_{6}))=1,$ we know $\Phi_{1}(y_{6})=y_{4}-\frac{\sqrt{2}}{2}$.
So $h(\Phi_{1}(y_{6}))=\frac{\sqrt{2}}{2}$ and 
\[
|\Phi_{2}(y_{6})-\Phi_{1}(y_{6})|\leq CH.
\]
When $y\in[y_{3},y_{6}]$, 
\begin{align*}
y-y_{6} & =\int_{g_{a}(y_{6})}^{g_{a}(y)}\frac{1}{g_{a}'}dg_{a}\\
 & =\int_{g_{a}(y_{6})}^{g_{a}(y)}\frac{\tau+g_{a}^{2}}{\sqrt{g_{a}^{2}-(\tau+g_{a}^{2})^{2}}}dg_{a}
\end{align*}
and 
\begin{align*}
\Phi_{2}(y)-\Phi_{2}(y_{6}) & =\int_{h(\Phi_{2}(y_{6}))}^{h(\Phi_{2}(y))}\frac{1}{h'}dh\\
 & =\int_{g_{a}(y_{6})}^{g_{a}(y)}\frac{g_{a}}{\sqrt{1-g_{a}^{2}}}dg_{a}.
\end{align*}
So we have 
\begin{align*}
 & \Phi_{2}(y)-y-(\Phi_{2}(y_{6})-y_{6})\\
= & \int_{g_{a}(y_{6})}^{g_{a}(y)}(\frac{g_{a}}{\sqrt{1-g_{a}^{2}}}-\frac{\tau+g_{a}^{2}}{\sqrt{g_{a}^{2}-(\tau+g_{a}^{2})^{2}}})dg_{a}\\
= & \int_{g_{a}(y_{6})}^{g_{a}(y)}\frac{-\tau^{2}-2\tau g_{a}^{2}}{\sqrt{1-g_{a}^{2}}\sqrt{g_{a}^{2}-(\tau+g_{a}^{2})^{2}}(g_{a}\sqrt{g_{a}^{2}-(\tau+g_{a}^{2})^{2}}+(\tau+g_{a}^{2})\sqrt{1-g_{a}^{2}})}dg_{a}.
\end{align*}
When $y\in[y_{3},y_{6}]$, 
\begin{align*}
1-g_{a}^{2} & \geq1-(\frac{\sqrt{2}}{2}+CH)^{2}\geq\frac{1}{3},\\
|\tau| & \leq CH.
\end{align*}
From (\ref{ka upperbound}) we know when $y\in[y_{3},y_{4}]$
\[
\frac{\tau+g_{a}^{2}}{g_{a}^{2}}=\frac{1}{k_{a}}\geq\frac{1}{3}.
\]
So 
\begin{equation}
\tau+g_{a}^{2}\geq\frac{g_{a}^{2}}{3}.\label{tau+g^2  estimate}
\end{equation}
From $g_{a}\leq\frac{\sqrt{2}}{2}+CH,|\tau|\leq CH$, one can check
that, for some $C>0,$ 
\[
g_{a}^{2}-(\tau+g_{a}^{2})^{2}\geq Cg_{a}^{2}.
\]
So we have 
\[
|\frac{-\tau^{2}-2\tau g_{a}^{2}}{\sqrt{1-g_{a}^{2}}\sqrt{g_{a}^{2}-(\tau+g_{a}^{2})^{2}}(g_{a}\sqrt{g_{a}^{2}-(\tau+g_{a}^{2})^{2}}+(\tau+g_{a}^{2})\sqrt{1-g_{a}^{2}})}|\leq C\frac{H^{2}+Hg_{a}^{2}}{g_{a}^{3}}.
\]
Then
\begin{align*}
|\Phi_{2}(y)-y-(\Phi_{2}(y_{6})-y_{6})| & \leq C|\int_{g_{a}(y_{6})}^{g_{a}(y)}\frac{H^{2}+Hg_{a}^{2}}{g_{a}^{3}}dg_{a}|\\
 & =C(|\frac{H^{2}}{g_{a}^{2}(y)}-\frac{H^{2}}{g_{a}^{2}(y_{6})}|+H|\log g_{a}(y)-\log g_{a}(y_{6})|)\\
 & \leq CH(\frac{H}{g_{a}^{2}(y)}+|\log g_{a}(y)|).
\end{align*}
So we have when $y\in[y_{3},y_{6}]$ 
\begin{align*}
|\Phi_{2}(y)-y| & \leq|\Phi_{2}(y_{6})-\Phi_{1}(y_{6})|+|\Phi_{1}(y_{6})-y_{6}|+CH(\frac{H}{g_{a}^{2}}+|\log g_{a}|)\\
 & \leq CH(\frac{H}{g_{a}^{2}}+|\log g_{a}|+1).
\end{align*}
As $1\leq C|\log\frac{\sqrt{2}}{2}|$, we get the second estimate.

\end{proof}

\begin{lem}\label{y4 estimate} When $H$ is very small and $-\delta_{1}(H)<a-a'(H)<0$,
\begin{align*}
|y_{4}-2| & \leq C\sqrt{H},\\
|y_{5}-3| & \leq C\sqrt{H}.
\end{align*}

\end{lem}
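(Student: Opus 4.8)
The plan is to write $y_4=(y_1-0)+(y_2-y_1)+(y_3-y_2)+(y_4-y_3)$ and $y_5=y_4+(y_5-y_4)$, and to estimate each summand by comparing $g_a$ with an arc of the meridian $\sqrt{1-(y-c)^2}$ of the unit sphere, in the spirit of Lemma~\ref{Phi1,Phi2}. The whole argument runs under the standing hypothesis $-\delta_1(H)<a-a'(H)<0$, so that (H1) occurs and $y_1,\dots,y_5$ are defined. First I would record the consequences of Lemma~\ref{y5 estimate}: $|g_a'(y_1)|,|g_a'(y_5)|\ge C/\sqrt H$, hence (using $k_a(y_1)=\tfrac1{2+\rho(y_1)}$, $k_a(y_5)=\tfrac1{2+\rho(y_5)}$ and Lemma~\ref{y2 to y3}) $g_a(y_1),g_a(y_3),g_a(y_5)\le C\sqrt H$; and, as already noted in the proof of Lemma~\ref{Phi1,Phi2}, $|\tau(g_a(y),g_a'(y))|\le CH$ on $[0,y_5]$, whence (via \eqref{tau(y1)-tau(0)}) $|a-1|\le CH$ and $|g_a(y_4)-1|\le CH$.

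\textbf{The two long pieces.} For $y_4-y_3$ I would use Lemma~\ref{Phi1,Phi2}: $|\Phi_1(y)-y|\le CH$ on $[y_6,y_4]$ and $|\Phi_2(y)-y|\le CH\bigl(\tfrac{H}{g_a^2(y)}+|\log g_a(y)|\bigr)$ on $[y_3,y_6]$, with $h(y)=\sqrt{1-(y-y_4)^2}$. Let $y_3^*$ be the first point of $[y_3,y_6]$ with $g_a=\sqrt H$ (put $y_3^*=y_3$ if $g_a(y_3)\ge\sqrt H$). On $[y_3^*,y_6]$ the $\Phi_2$-estimate is $O(H|\log H|)$, so $y_3^*=\Phi_2(y_3^*)+O(H|\log H|)$; since $h(\Phi_2(y_3^*))=g_a(y_3^*)=\sqrt H$ forces $\Phi_2(y_3^*)=y_4-\sqrt{1-H}\in(y_4-1,y_4-1+H)$, this yields $y_3^*=y_4-1+O(H|\log H|)$. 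On $[y_3,y_3^*]$ one has $g_a\le\sqrt H$ and, since $g_a''<0$ there, $k_a\ge\tfrac1{2+\hat CH}$, so $|g_a'|=\sqrt{(k_a/g_a)^2-1}\ge\tfrac1{4\sqrt H}$, giving length $\le\int_{g_a(y_3)}^{\sqrt H}|g_a'|^{-1}\,dg_a\le4H$. Hence $y_4-y_3=1+O(\sqrt H)$. The same argument reflected about $y=0$ (comparing with $\sqrt{1-y^2}$, which has $\tau\equiv0$ and top value $1=a+O(H)$) will give $y_1-0=1+O(\sqrt H)$, and reflected about $y_4$ it will give $y_5-y_4=1+O(\sqrt H)$.

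\textbf{The two neck pieces.} For $y_2-y_1$ I would pick the unique $\bar y\in(y_1,y_2)$ with $g_a'(\bar y)=-1$. On $[y_1,\bar y]$, $|g_a'|\ge1$ and $g_a\le g_a(y_1)\le C\sqrt H$, so $\bar y-y_1\le g_a(y_1)\le C\sqrt H$. On $[\bar y,y_2]$, $k_a=g_a\sqrt{1+g_a'^2}<\sqrt2\,g_a\le C\sqrt H$, so $k_a-\tfrac12$ keeps a fixed sign with $|k_a-\tfrac12|$ between $\tfrac12-C\sqrt H$ and $\tfrac12$, and $g_a'$ is monotone; Lemma~\ref{beta-alpha estimate}(2) then gives $y_2-\bar y\le C\,\sup_{[\bar y,y_2]}g_a\cdot|\arctan0-\arctan(-1)|\le C\sqrt H$. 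So $y_2-y_1\le C\sqrt H$, and the mirror argument on $[y_2,y_3]$ (monotonicity from Lemma~\ref{y2 to y3}) gives $y_3-y_2\le C\sqrt H$. Adding the four pieces gives $y_4=2+O(\sqrt H)$, and then $y_5=y_4+(y_5-y_4)=3+O(\sqrt H)$, which is the assertion.

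I expect the main obstacle to be the neck region: since there is no lower bound for $g_a(y_1),g_a(y_3)$, the clean comparison of Lemma~\ref{Phi1,Phi2} degenerates there, as the $\tfrac{H}{g_a^2}$ term can be of order $1$. The device that fixes this is to run the $\Phi$-comparison only down to the level $g_a=\sqrt H$ and to absorb the leftover short interval into an $O(H)$ error using the crude but robust bound $|g_a'|\gtrsim1/\sqrt H$ valid there (a consequence of $k_a\approx\tfrac12$ together with $g_a\lesssim\sqrt H$); the threshold $\sqrt H$ is precisely the one balancing this $O(\sqrt H\cdot g_a)$-type tail against the $O(H\cdot H/g_a^2)$-type comparison error. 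A further routine point is to set up, by the usual phase-plane and monotonicity arguments, the reflected analogues of $\Phi_1,\Phi_2$ near $y=0$ and near $y_4$ used in Step~1.
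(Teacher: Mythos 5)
Your proposal is correct and follows essentially the same route as the paper: both decompose $y_4$ and $y_5$ into sphere-like arcs and neck pieces, apply the $\Phi_1,\Phi_2$ comparison of Lemma \ref{Phi1,Phi2} only down to the level $g_a\sim\sqrt H$ (where the $H^2/g_a^2$ and $|\log g_a|$ errors are still $O(H|\log H|)$), and absorb the remaining neck segments near $y_1,y_2,y_3$ into $O(\sqrt H)$ via Lemma \ref{beta-alpha estimate} or the equivalent direct integration with $k_a\approx\tfrac12$, $g_a\lesssim\sqrt H$. The only differences are cosmetic (you cut at the $y_i$ while the paper cuts at the level-$\sqrt H$ points $\hat y_1,y_6'$), so the argument matches the paper's proof in substance.
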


\begin{proof}From (\ref{tau(y3)}) we know 
\[
h(\Phi_{2}(y_{3}))=g_{a}(y_{3})\leq C\sqrt{H}.
\]
So there is some $y_{6}'\in(y_{3,}y_{6})$ such that $g_{a}(y_{6}')=C\sqrt{H}$
where we use the same constant $C$ as the right hand side of the
above inequality. From Lemma \ref{Phi1,Phi2}, we know 
\[
|y_{6}'-\Phi_{2}(y_{6}')|\leq CH(\frac{H}{g_{a}^{2}(y_{6}')}+\log g_{a}(y_{6}'))\leq CH|\log H|.
\]
We know that $|\Phi_{2}(y_{6}')-(y_{4}-1)|\leq CH.$ So we have 
\[
|y_{6}'-(y_{4}-1)|\leq CH|\log H|.
\]
By applying Lemma \ref{beta-alpha estimate} we can get 
\[
|y_{6}'-y_{2}|\leq C\sqrt{H}.
\]

In the same way we can find $\hat{y}_{1}\in(1,y_{1})$ such that $g_{a}(\hat{y}_{1})=C\sqrt{H}$
for some $C>0$. And we can prove that 
\begin{align*}
|\hat{y}_{1}-1| & \leq CH\log H,\\
|\hat{y}_{1}-y_{2}| & \leq C\sqrt{H}.
\end{align*}

So 
\begin{align*}
|y_{4}-2| & =|y_{4}-1-y_{6}'+y_{6}'-y_{2}+y_{2}-\hat{y}_{1}+\hat{y}_{1}-1|\\
 & \le CH\log H+C\sqrt{H}\leq C\sqrt{H}.
\end{align*}
The second estimate follows similarly.

\end{proof}

\begin{lem}\label{y3-y4 integral} When $H$ is very small and $-\delta_{1}(H)<a-a'(H)<0$,
\begin{align*}
|\int_{y_{3}}^{y_{4}}\frac{1}{\sqrt{y^{2}+g_{a}(y)^{2}}}g_{a}g_{a}'dy-\int_{y_{4}-1}^{y_{4}}\frac{1}{\sqrt{y^{2}+h(y)^{2}}}hh'dy| & \leq CH,\\
|\int_{y_{3}}^{y_{4}}\frac{g_{a}-yg'_{a}}{(y^{2}+g_{a}(y)^{2})^{\frac{3}{2}}\sqrt{1+g_{a}'^{2}}}g_{a}g_{a}'dy-\int_{y_{4}-1}^{y_{4}}\frac{h-yh'}{(y^{2}+h(y)^{2})^{\frac{3}{2}}\sqrt{1+h'^{2}}}hh'dy| & \leq CH.
\end{align*}

\end{lem}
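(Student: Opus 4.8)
The plan is to transplant both integrals over $[y_3,y_4]$ onto the model interval $[y_4-1,y_4]$ via the reparametrizations $\Phi_1,\Phi_2$, and to verify that in the regime $-\delta_1(H)<a-a'(H)<0$ the transplanted integrand of each integral differs from the corresponding $h$-integrand by an error that is $O(H)$ after integration. First I would record the a priori facts valid in this regime; write $\tau_a(y)=\tau(g_a(y),g_a'(y))$. From the proof of Lemma \ref{Phi1,Phi2}, $|\tau_a(y)|\le CH$ on $[0,y_5]$; together with $g_a'(y_4)=0$ and $g_a(y_4)\in(\tfrac12,1.2)$ (Lemma \ref{y3 to y4}) this gives $g_a(y_4)(1-g_a(y_4))=\tau_a(y_4)=O(H)$, hence $g_a(y_4)=1+O(H)$; from $g_a''(y_3)=0$ one has $\tau_a(y_3)\asymp g_a(y_3)^2$, which with (\ref{tau(y3) estimate}) gives $g_a(y_3)=:g_3\asymp\sqrt H$; and from $\tau_a(y_6)=O(H)$, $g_a'(y_6)=1$ and $g_a(y_6)$ bounded away from $0$ (as $k_a$ has increased past $\tfrac12-O(H)$ on $[y_3,y_6]$, by the proof of Lemma \ref{y3 to y4}) one gets $g_a(y_6)=\tfrac{\sqrt2}{2}+O(H)$. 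By Lemma \ref{y4 estimate}, $y_4=2+O(\sqrt H)$, so on $[y_3,y_4]$ and $[y_4-1,y_4]$ the quantities $y$ and $d=\sqrt{y^2+g_a^2}$ stay in a fixed compact subset of $(0,\infty)$, and both integrands of the lemma are uniformly bounded (using $\tfrac{|g_a-yg_a'|}{d\sqrt{1+g_a'^2}}\le1$ and $|g_ag_a'|=k_a\tfrac{|g_a'|}{\sqrt{1+g_a'^2}}\le C$). Finally, for the model, $h\sqrt{1+h'^2}\equiv1$, $\tau(h,h')\equiv0$, $h''=-\tfrac{1+h'^2}{h}$ and $hh'=y_4-y$; in particular $hh'$ is bounded on $[y_4-1,y_4]$ even though $h'$ alone blows up like $h^{-1}$ at $y_4-1$.

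I would split $[y_3,y_4]=[y_3,y_6]\cup[y_6,y_4]$ (where $g_a'(y_6)=1$) and $[y_4-1,y_4]=[y_4-1,y_4-\tfrac{\sqrt2}{2}]\cup[y_4-\tfrac{\sqrt2}{2},y_4]$, noting $\Phi_1(y_6)=y_4-\tfrac{\sqrt2}{2}$, $\Phi_2(y_6)=y_4-\tfrac{\sqrt2}{2}+O(H)$, and $\Phi_2(y_3)=y_4-\sqrt{1-g_3^2}=y_4-1+O(H)$. On the upper piece, change variables by $z=\Phi_1(y)$, so $g_a'(y)=h'(z)$ identically there; subtracting $-g_a(y)^2+\tfrac{g_a(y)}{\sqrt{1+g_a'(y)^2}}=\tau_a(y)$ and $-h(z)^2+\tfrac{h(z)}{\sqrt{1+h'(z)^2}}=0$, and using $\sqrt{1+g_a'(y)^2}=\sqrt{1+h'(z)^2}$, one obtains $g_a(y)-h(\Phi_1(y))=-\tau_a(y)/g_a(y)=O(H)$ on $[y_6,y_4]$, where $g_a\ge\tfrac12$. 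From the formula for $\Phi_1'$ in the proof of Lemma \ref{Phi1,Phi2}, together with $(2+\rho)k_a=2+O(H)$ and $|h''|\ge1$, one also gets $|\Phi_1'-1|\le CH$, complementing $|\Phi_1-\mathrm{id}|\le CH$. Since on $[y_6,y_4]$ all the relevant quantities ($g_a$, $h\circ\Phi_1$, $1+g_a'^2$, $y^2+g_a^2$) stay bounded above and below, the transplanted integrand of either integral differs pointwise from the $h$-integrand by $O(H)$ on an interval of length $O(1)$, so the upper piece contributes $O(H)$ to each difference.

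On the lower piece, change variables by $z=\Phi_2(y)$, using that there $g_a=h\circ\Phi_2$ \emph{exactly}, whence $g_ag_a'\,dy=g_a\,dg_a=(hh')(z)\,dz$. Thus each of the two integrals transplants to the corresponding $h$-integral with $z$ replaced by $y=\Phi_2^{-1}(z)$ in the ``$y$'' slots, and in the second integral with the factor $\tfrac{g_a}{\sqrt{1+g_a'^2}}$ replaced by its exact value $\tau_a+g_a^2$, so that one never has to compare $\sqrt{1+g_a'^2}$ with $\sqrt{1+(h'\circ\Phi_2)^2}$. Since $hh'=y_4-z$ is bounded and the denominators $(z^2+h^2)^{1/2}$, $(z^2+h^2)^{3/2}$, $\sqrt{1-h^2}$ are bounded above and below on $[y_4-1,y_4-\tfrac{\sqrt2}{2}]$, the pointwise error in the transplanted integrand is controlled by $|\Phi_2^{-1}(z)-z|\le CH\bigl(H/h(z)^2+|\log h(z)|\bigr)$ (Lemma \ref{Phi1,Phi2}), plus, for the second integral, an extra $O(H)+O(H^2/h(z)^2)$ from expanding $\sqrt{1-(\tau_a+h^2)^2/h^2}$. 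Writing $z=y_4-|t|$, so $h(z)^2=1-t^2$ and $|t|\in[\tfrac{\sqrt2}{2},\sqrt{1-g_3^2}\,]$, the integrated error is at most $\int O(H)\,d|t|+\int\tfrac{O(H^2)}{1-t^2}\,d|t|+\int O(H)\,|\log(1-t^2)|\,d|t|=O(H)+O(H^2|\log H|)+O(H)$, the second term being $o(H)$ because $g_3\asymp\sqrt H$ confines the logarithmic divergence to $O(|\log H|)$, and the third being $O(H)$ because $\int_0^1|\log(1-t^2)|\,d|t|<\infty$. Finally the $O(H)$ mismatches between the endpoints of the $z$-intervals contribute at most $CH$ times the bounded integrands. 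Summing the two pieces gives both asserted estimates.

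The step I expect to be the real obstacle is the lower piece near the rotation axis ($y\to y_3$, $z\to y_4-1$, where $g_a\approx g_3\approx\sqrt H$): there $g_a'$ and $h'$ blow up and the only control on $\Phi_2$ is the weak logarithmic estimate of Lemma \ref{Phi1,Phi2}. Two features rescue the argument. First, rewriting everything against $g_a\,dg_a=(hh')\,dz$ absorbs the blow-up of $g_a'$ and $h'$ into the bounded weight $hh'=y_4-z$, leaving a bounded density to be integrated against $dz$. Second, the logarithmic loss is harmless, since $\int_0|\log(\cdot)|$ converges and the $H^2$-weighted, $(1-t^2)^{-1}$-type contribution is only $O(H^2|\log H|)=o(H)$ -- which is precisely where the a priori inputs $g_3\asymp\sqrt H$, $g_a(y_4)=1+O(H)$ and $y_4=2+O(\sqrt H)$ enter, to line up the two domains to within $O(H)$.
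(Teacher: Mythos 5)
Your argument is correct and is essentially the paper's own proof: split at $y_{6}$, transplant the upper piece via $\Phi_{1}$ and the lower piece via $\Phi_{2}$ (equivalently integrating against $g_{a}\,dg_{a}=hh'\,dz$ so the blow-up of $g_{a}'$, $h'$ is absorbed), and control the error by Lemma \ref{Phi1,Phi2}, with the $H^{2}/g_{a}^{2}$ part giving $O(H^{2}|\log H|)$ and the logarithmic part giving $O(H)$. The only deviation is cosmetic: the paper truncates at $y_{6}'$ with $g_{a}(y_{6}')=C\sqrt{H}$ and bounds the leftover pieces trivially via $g_{a}g_{a}'dy=\tfrac12 d(g_{a}^{2})$, whereas you integrate down to $y_{3}$ and invoke the lower bound in (\ref{tau(y3) estimate}) to keep $|\log g_{a}(y_{3})|\leq C|\log H|$, which is equally legitimate (your identity $g_{a}-h\circ\Phi_{1}=-\tau_{a}/g_{a}$ is only approximate, but the needed $O(H)$ bound is exactly what the paper's Lemma \ref{Phi1,Phi2} provides).
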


\begin{proof}For the first one we have an obvious reason to prove
\[
|\int_{y_{6}'}^{y_{4}}\frac{1}{\sqrt{y^{2}+g_{a}(y)^{2}}}g_{a}g_{a}'dy-\int_{\Phi_{2}(y_{6}')}^{y_{4}}\frac{1}{\sqrt{y^{2}+h(y)^{2}}}hh'dy|\leq CH
\]
instead, where $y_{6}'\in(y_{3},y_{6})$ and $g_{a}(y_{6}')=C\sqrt{H}.$
By using Lemma \ref{Phi1,Phi2}, this can be verified as 
\begin{align*}
 & |\int_{y_{6}'}^{y_{6}}\frac{1}{\sqrt{y^{2}+g_{a}(y)^{2}}}g_{a}g_{a}'dy-\int_{\Phi_{2}(y_{6}')}^{\Phi_{2}(y_{6})}\frac{1}{\sqrt{y^{2}+h(y)^{2}}}hh'dy|\\
\leq & \int_{g_{a}(y_{6}')}^{g_{a}(y_{6})}|\frac{g_{a}}{\sqrt{y^{2}+g_{a}(y)^{2}}}-\frac{g_{a}}{\sqrt{\Phi_{2}(y)^{2}+g_{a}(y)^{2}}}|dg_{a}\\
\leq & C\int_{g_{a}(y_{6}')}^{g_{a}(y_{6})}g_{a}H(\frac{H}{g_{a}^{2}}+|\log g_{a}|)dg_{a}\\
\leq & CH^{2}|\log H|+CH\leq CH,
\end{align*}
\[
|\int_{\Phi_{2}(y_{6})}^{\Phi_{1}(y_{6})}\frac{1}{\sqrt{y^{2}+h(y)^{2}}}hh'dy|\leq CH,
\]

\begin{align*}
 & |\int_{\Phi_{1}(y_{6})}^{y_{4}}\frac{1}{\sqrt{y^{2}+h(y)^{2}}}hh'dy-\int_{y_{6}}^{y_{4}}\frac{1}{\sqrt{y^{2}+g_{a}(y)^{2}}}g_{a}g_{a}'dy|\\
\leq & |\int_{y_{6}}^{y_{4}}\frac{1}{\sqrt{\Phi_{1}(y)^{2}+h(\Phi_{1}(y))^{2}}}h(\Phi_{1}(y))h'(\Phi_{1}(y))\Phi_{1}'(y)-\frac{1}{\sqrt{y^{2}+g_{a}(y)^{2}}}g_{a}g_{a}'dy|\\
\leq & CH.
\end{align*}

The proof of the second one is similar. 

\end{proof}

From Lemma \ref{y5 estimate}, using the same technique as the above
lemma, we can prove

\begin{lem}\label{y4-y5 integral}When $H$ is very small and $-\delta_{4}(H)<a-a'(H)<0,$
we have 
\begin{align*}
|\int_{y_{4}}^{y_{5}}\frac{1}{\sqrt{y^{2}+g_{a}(y)^{2}}}g_{a}g_{a}'dy-\int_{y_{4}}^{y_{4}+1}\frac{1}{\sqrt{y^{2}+h(y)^{2}}}hh'dy| & \leq CH,\\
|\int_{y_{4}}^{y_{5}}\frac{g_{a}-yg'_{a}}{(y^{2}+g_{a}(y)^{2})^{\frac{3}{2}}\sqrt{1+g_{a}'^{2}}}g_{a}g_{a}'dy-\int_{y_{4}}^{y_{4}+1}\frac{h-yh'}{(y^{2}+h(y)^{2})^{\frac{3}{2}}\sqrt{1+h'^{2}}}hh'dy| & \leq CH.
\end{align*}

\end{lem}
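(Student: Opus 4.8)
The plan is to run the proof of Lemma \ref{y3-y4 integral} in a mirror image, now comparing $g_{a}$ on $[y_{4},y_{5}]$ with the right half $[y_{4},y_{4}+1]$ of the same circular arc $h(y)=\sqrt{1-(y-y_{4})^{2}}$. Under the stated hypothesis $a<a'(H)$, so $a\in E(H)$ and we are in case (H1); hence by Lemma \ref{y4 to y5} both $g_{a}$ and $g_{a}'$ decrease monotonically on $(y_{4},y_{5})$, starting from $(g_{a}(y_{4}),0)$ and ending at an inflection point with $g_{a}''(y_{5})=0$. In this regime, as recorded in the proof of Lemma \ref{Phi1,Phi2}, $|\tau(g_{a}(y),g_{a}'(y))|\leq CH$ for $y\in[0,y_{5}]$; by Lemma \ref{y5 estimate} $|g_{a}'(y_{5})|\geq C/\sqrt{H}$, and since $k_{a}(y_{5})=1/(2+\rho(y_{5}))$ is bounded this forces $g_{a}(y_{5})\leq C\sqrt{H}$; and by Lemma \ref{y4 estimate} $y_{4}=2+O(\sqrt{H})$, $y_{5}=3+O(\sqrt{H})$. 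Finally $\tau(g_{a}(y_{4}),g_{a}'(y_{4}))=O(H)$ gives $g_{a}(y_{4})=h(y_{4})+O(H)$, while $g_{a}'(y_{4})=0=h'(y_{4})$ and $\tau(h(y),h'(y))\equiv0$, so $g_{a}$ and $h$ agree to order $H$ at $y_{4}$.

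First I would set up the two comparison maps on $[y_{4},y_{5}]$, just as in Lemma \ref{Phi1,Phi2}. Let $\tilde{y}_{6}\in(y_{4},y_{5})$ be the unique point with $g_{a}'(\tilde{y}_{6})=-1$ (it exists and is unique since $g_{a}'$ decreases from $0$ to a value below $-C/\sqrt{H}<-1$), and define continuous maps $\tilde{\Phi}_{1}\colon[y_{4},\tilde{y}_{6}]\to\mathbb{R}$ by $g_{a}'(y)=h'(\tilde{\Phi}_{1}(y))$, $\tilde{\Phi}_{1}(y_{4})=y_{4}$, and $\tilde{\Phi}_{2}\colon[\tilde{y}_{6},y_{5}]\to\mathbb{R}$ by $g_{a}(y)=h(\tilde{\Phi}_{2}(y))$, $\tilde{\Phi}_{2}(\tilde{y}_{6})\in(y_{4},y_{4}+1)$. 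Repeating the computation in the proof of Lemma \ref{Phi1,Phi2} verbatim --- it uses only $|\tau|\leq CH$, the boundedness of $g_{a}$ and $h$ away from the small-$g_{a}$ end, and the non-degeneracy of $h$ as a function of $h'$ near $y_{4}$ --- yields $|\tilde{\Phi}_{1}(y)-y|\leq CH$ on $[y_{4},\tilde{y}_{6}]$, $|\tilde{\Phi}_{2}(y)-y|\leq CH(\tfrac{H}{g_{a}^{2}(y)}+|\log g_{a}(y)|)$ on $[\tilde{y}_{6},y_{5}]$, and $|\tilde{\Phi}_{1}(\tilde{y}_{6})-\tilde{\Phi}_{2}(\tilde{y}_{6})|\leq CH$.

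Then I would split the integrals. Pick $\tilde{y}_{6}'\in(\tilde{y}_{6},y_{5})$ with $g_{a}(\tilde{y}_{6}')=C\sqrt{H}$ (the same $C$ as in $g_{a}(y_{5})\leq C\sqrt{H}$). On $[\tilde{y}_{6}',y_{5}]$ one has $0<g_{a}\leq C\sqrt{H}$ and $y$ bounded below by a positive constant, while $\int_{\tilde{y}_{6}'}^{y_{5}}|g_{a}g_{a}'|\,dy=\tfrac{1}{2}(g_{a}(\tilde{y}_{6}')^{2}-g_{a}(y_{5})^{2})\leq CH$, $|g_{a}g_{a}'|\leq3$, and (using $\tfrac{|g_{a}-yg_{a}'|}{\sqrt{y^{2}+g_{a}^{2}}\sqrt{1+g_{a}'^{2}}}\leq1$) the kernel of the second integrand is bounded there; the same holds on the $h$-side over $[\tilde{\Phi}_{2}(\tilde{y}_{6}'),y_{4}+1]$, where $h\leq C\sqrt{H}$. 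Hence all four ``tail'' contributions are $\leq CH$, and it remains to compare $\int_{y_{4}}^{\tilde{y}_{6}'}$ in $y$ with $\int_{y_{4}}^{\tilde{\Phi}_{2}(\tilde{y}_{6}')}$ in the $h$-variable; I split this at $\tilde{y}_{6}$. On $[y_{4},\tilde{y}_{6}]$ I change variables through $\tilde{\Phi}_{1}$ and use $\tilde{\Phi}_{1}(y)=y+O(H)$, $\tilde{\Phi}_{1}'(y)=1+O(H)$, $h(\tilde{\Phi}_{1}(y))=g_{a}(y)(1+O(H))$, $h'(\tilde{\Phi}_{1}(y))=g_{a}'(y)$, so the two integrands differ by $O(H)$ on a bounded interval; the ``gap'' $[\tilde{\Phi}_{1}(\tilde{y}_{6}),\tilde{\Phi}_{2}(\tilde{y}_{6})]$ of the $h$-integral has length $\leq CH$ with bounded integrand; and on $[\tilde{y}_{6},\tilde{y}_{6}']$ I change the integration variable to $g_{a}$ and bound the discrepancy, exactly as at the end of the proof of Lemma \ref{y3-y4 integral}, by
\[
C\int_{g_{a}(\tilde{y}_{6}')}^{g_{a}(\tilde{y}_{6})}g_{a}\,H\Big(\tfrac{H}{g_{a}^{2}}+|\log g_{a}|\Big)\,dg_{a}\leq CH^{2}|\log H|+CH\leq CH.
\]
The second integrand is handled by the same decomposition: on the ranges in play both $\tfrac{g_{a}-yg_{a}'}{(y^{2}+g_{a}^{2})^{3/2}\sqrt{1+g_{a}'^{2}}}$ and $g_{a}g_{a}'$ are bounded, so the extra factor is harmless and the same bookkeeping goes through.

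I expect the only genuine obstacle to be organizational rather than technical: the two endpoints play reversed roles relative to Lemma \ref{y3-y4 integral}, so the region where $g_{a}$ is $O(\sqrt{H})$ now sits at the far end $y_{5}$ and must be matched to the part of the circle near $y_{4}+1$; and one must make sure the smallness inputs $g_{a}(y_{5})\leq C\sqrt{H}$, $|\tau|\leq CH$ on $[y_{4},y_{5}]$, and $y_{5}=3+O(\sqrt{H})$ are genuinely available --- they are, from Lemma \ref{y5 estimate}, Lemma \ref{y4 estimate}, and the $\tau$-control accumulated through Lemmas \ref{tau(y4) estimate} and \ref{y4 to y5}. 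Once these are in hand, every individual estimate required is one already performed in the proofs of Lemmas \ref{Phi1,Phi2} and \ref{y3-y4 integral}.
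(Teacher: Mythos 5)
Your proposal is correct and follows essentially the same route as the paper, which proves this lemma precisely by repeating the technique of Lemma \ref{y3-y4 integral} in mirror image on $[y_{4},y_{5}]$, with Lemma \ref{y5 estimate} supplying the key smallness $g_{a}(y_{5})\leq C\sqrt{H}$ at the far endpoint. The detailed bookkeeping you provide (the maps $\tilde{\Phi}_{1},\tilde{\Phi}_{2}$, the cut at $g_{a}=C\sqrt{H}$, and the $O(H)$ tail bounds) is exactly the argument the paper leaves implicit.
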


\begin{lem}\label{key lemma} We can fix $\lambda>0$ small and $p>0$
large such that there is $\delta(p,\lambda)>0$ small such that when
$0<H<\delta(p,\lambda)$ and $-\delta_{1}(H)<a-a'(H)<0$, we have
$|g_{a}'(y_{5})|\geq|g_{a}'(y_{1})|.$ And moreover 
\[
\lim_{a\rightarrow a'(H)^{-}}g_{a}'(y_{5})=-\infty,\liminf_{a\rightarrow a'(H)^{-}}g_{a}'(y_{1})>-\infty.
\]

\end{lem}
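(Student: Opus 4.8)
The plan is to compare the Delaunay parameter $\tau(g_a,g_a')=-g_a^2+g_a/\sqrt{1+g_a'^2}$ at $y_1$ and $y_5$. For $a\in(a'(H)-\delta_1(H),a'(H))\subset E(H)$ the case (H1) occurs, so $y_1<y_2<y_3<y_4<y_5$ are all defined, $g_a\in C^2([y_1,y_5])$, and since $g_a'$ vanishes only at $y_2,y_4$ on $[y_1,y_5]$, equation (\ref{tau-ODE}) integrates to
\[
\tau(g_a(y_5),g_a'(y_5))-\tau(g_a(y_1),g_a'(y_1))=\int_{y_1}^{y_5}g_ag_a'\rho\,dy .
\]
At $y_1$ and $y_5$ the relation $g_a''(y_i)=0$ reads $(2+\rho(y_i))k_a(y_i)=1$, so $\tau(g_a(y_i),g_a'(y_i))=k_a(y_i)(1-k_a(y_i))/(1+g_a'(y_i)^2)$ with $k_a(y_i)(1-k_a(y_i))=\tfrac14+O(H)$. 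Thus the whole lemma reduces to the estimate
\[
\int_{y_1}^{y_5}g_ag_a'\rho\,dy\le -c(p)H^2,\qquad c(p)\to+\infty\ \text{as}\ p\to+\infty ,
\]
uniform in $a$: it gives $\tau(y_5)<\tau(y_1)$ and, since $\tau(y_5)>0$ in case (H1), also $\tau(y_1)\ge c(p)H^2>0$, which forces $1+g_a'(y_1)^2\le(\tfrac14+O(H))/(c(p)H^2)$ and hence $\liminf_{a\to a'(H)^-}g_a'(y_1)>-\infty$ for fixed $H$; moreover, writing $\frac{k_a(y_5)(1-k_a(y_5))}{\tau(y_5)}-\frac{k_a(y_1)(1-k_a(y_1))}{\tau(y_1)}$ over a common denominator and using $\tau(y_1)-\tau(y_5)\ge c(p)H^2$ and $\tau(y_1),\tau(y_5)\le CH$ (Lemma~\ref{y5 estimate}), the numerator is $\ge(\tfrac18c(p)-C)H^2\ge0$ once $p$ is large, so $|g_a'(y_5)|\ge|g_a'(y_1)|$, and then Lemma~\ref{non compact} yields $g_a'(y_5)\to-\infty$.

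For the estimate, substitute the expansion of $\rho$ and split $\int_{y_1}^{y_5}g_ag_a'\rho\,dy=I_1+I_2-I_3+I_4$, where $I_1$ collects the two $O(H)$ terms $\frac Hd+\frac{H(g_a-yg_a')}{d^3\sqrt{1+g_a'^2}}$, $I_2$ the term $\frac{H^2(\phi-3)(g_a-yg_a')}{4d^3\sqrt{1+g_a'^2}}$, $I_3$ the $\phi'$-term, and $I_4=C(p,\lambda)O(H^3)$. The key point is $I_1=O(H^2)$ with a \emph{universal} constant. On $[y_3,y_5]$, Lemmas~\ref{y3-y4 integral} and~\ref{y4-y5 integral} replace the two $g_a$-integrals composing $I_1$ by the corresponding integrals for the model circle $h$ up to $O(H)$ error, and an elementary computation (exhibiting an antiderivative) gives, with $v=y-y_4$,
\[
\int_{y_4-1}^{y_4+1}\Big(\frac{hh'}{\sqrt{y^2+h^2}}+\frac{(h-yh')hh'}{(y^2+h^2)^{3/2}\sqrt{1+h'^2}}\Big)dy=\left[\frac{-v^2+y_4v+y_4^2+1}{\sqrt{y_4^2+2vy_4+1}}\right]_{v=-1}^{v=1}=0 ,
\]
so the bulge contributes $0$; on $[y_1,y_3]$ one has $g_a=O(\sqrt H)$, hence $\int_{y_1}^{y_3}|g_ag_a'|\,dy=O(H)$, and since the non-$g_ag_a'$ factor of the $I_1$-integrand is $O(H)$, that piece is $O(H^2)$ as well.

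For $I_2$: on $[y_1,y_3]$ we have $g_a(y)/y<\lambda$, so $\phi\equiv1$ and, exactly as for $I_1$, this piece is $O(H^3)$; on $[y_3,y_5]$ write $\phi-3=(p-3)-(p-\phi)$. The $(p-\phi)$-part is supported where $g_a/y<2\lambda$ — a set of total length $O(\lambda^2)$ near the two necks, on which the integrand is $O(1)$ — so it is $O(p\lambda^2H^2)$; the $(p-3)$-part equals, again by Lemmas~\ref{y3-y4 integral}--\ref{y4-y5 integral} and an explicit circle integral,
\[
\frac{(p-3)H^2}{4}\Big(\int_{y_4-1}^{y_4+1}\frac{(h-yh')hh'}{(y^2+h^2)^{3/2}\sqrt{1+h'^2}}dy+O(H)\Big)=\frac{(p-3)H^2}{4}\Big(-\frac{2}{3y_4^2}+O(H)\Big),
\]
which, since $|y_4-2|\le C\sqrt H$ by Lemma~\ref{y4 estimate}, is $\le-c_1(p-3)H^2$ for a universal $c_1>0$. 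For $I_3$: $\phi'\neq0$ only where $g_a/y\in(\lambda,2\lambda)$, again a set of length $O(\lambda^2)$, and $|\phi'|\le C|p-1|/\lambda$ by (\ref{phi basic}), so $|I_3|\le C|p-1|\lambda H^2$. Collecting, $\int_{y_1}^{y_5}g_ag_a'\rho\,dy\le C_*H^2-c_1(p-3)H^2+C|p-1|(\lambda+\lambda^2)H^2+C(p,\lambda)H^3$ with $C_*$ universal; choosing first $\lambda$ small enough that the $\lambda$-terms are absorbed into $\tfrac12c_1(p-3)H^2$, then $p$ large enough that $\tfrac12c_1(p-3)\ge2C_*$ (and large enough for the numerator estimate of the first paragraph), then $\delta(p,\lambda)$ small enough to dominate $C(p,\lambda)H^3$, gives the bound with $c(p)=\tfrac14c_1(p-3)$.

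The main obstacle is proving $I_1=O(H^2)$ (the ``key observation'' flagged in the introduction): each of the two $O(H)$ terms of $\rho$ is only pointwise of size $O(H)$ over an interval of length $\approx2$, so a genuine cancellation is needed, and it is produced precisely by the vanishing of the circle integral above together with the smallness of $g_a$ across the intermediate neck $y_2$. Everything else — the two $h$-comparisons, the two explicit circle integrals, and the ordered choice of $\lambda$, then $p$, then $\delta(p,\lambda)$ — is bookkeeping resting on the lemmas already established.
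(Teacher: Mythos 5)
Your proposal is correct and follows essentially the same route as the paper: comparing $\tau$ at $y_1$ and $y_5$ via $\int g_ag_a'\rho\,dy$, reducing the $[y_3,y_5]$ integrals to the model circle $h$ through Lemmas \ref{y3-y4 integral}--\ref{y4-y5 integral}, exploiting the vanishing of the combined circle integral so that the $O(H)$ part of $\rho$ contributes only $O(H^2)$, using the $(p-3)H^2$ term (with $|y_4-2|\le C\sqrt H$) as the dominant negative contribution, bounding the $\phi$-transition and $\phi'$ terms by $C|p-1|(\lambda^2+\lambda)H^2$ and the $[y_1,y_3]$ piece by $CH^2$, and then concluding via $k_a(y_i)\approx\tfrac12$ and Lemma \ref{non compact}. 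The only additions are bookkeeping details the paper omits (e.g.\ the explicit antiderivative verifying the circle-integral cancellation and the explicit comparison of $|g_a'(y_1)|$ with $|g_a'(y_5)|$), which check out.
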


\begin{proof} From Lemma \ref{y3-y4 integral} and Lemma \ref{y4-y5 integral}
we know 
\begin{align*}
 & |\int_{y_{3}}^{y_{5}}(\frac{1}{\sqrt{y^{2}+g_{a}(y)^{2}}}+\frac{(g_{a}-yg'_{a})}{(y^{2}+g_{a}(y)^{2})^{\frac{3}{2}}\sqrt{1+g_{a}'^{2}}})g_{a}g_{a}'dy\\
 & -\int_{y_{4}-1}^{y_{4}+1}(\frac{1}{\sqrt{y^{2}+h(y)^{2}}}+\frac{h-yh'}{(y^{2}+h(y)^{2})^{\frac{3}{2}}\sqrt{1+h'^{2}}})hh'dy|\\
\leq & CH.
\end{align*}

The following facts are the key to our proof. When $y_{4}>1,$ 
\begin{align*}
\int_{y_{4}-1}^{y_{4}+1}(\frac{1}{\sqrt{y^{2}+h(y)^{2}}}+\frac{h-yh'}{(y^{2}+h(y)^{2})^{\frac{3}{2}}\sqrt{1+h'^{2}}})hh'dy & =0,\\
\int_{y_{4}-1}^{y_{4}+1}\frac{1}{\sqrt{y^{2}+h(y)^{2}}}hh'dy & =\frac{2}{3y_{4}^{2}},\\
\int_{y_{4}-1}^{y_{4}+1}\frac{h-yh'}{(y^{2}+h(y)^{2})^{\frac{3}{2}}\sqrt{1+h'^{2}}}hh'dy & =-\frac{2}{3y_{4}^{2}}.
\end{align*}

The proof of these facts takes direct calculations which we omit here.

From Lemma \ref{y4 estimate}, we know $|y_{4}-2|\leq C\sqrt{H}$.
So
\[
\int_{y_{4}-1}^{y_{4}+1}\frac{h-yh'}{(y^{2}+h(y)^{2})^{\frac{3}{2}}\sqrt{1+h'^{2}}}hh'dy=-\frac{1}{6}\pm O(\sqrt{H}).
\]
So we have 
\[
|\int_{y_{3}}^{y_{5}}(\frac{H}{\sqrt{y^{2}+g_{a}(y)^{2}}}+\frac{H(g_{a}-yg'_{a})}{(y^{2}+g_{a}(y)^{2})^{\frac{3}{2}}\sqrt{1+g_{a}'^{2}}})g_{a}g_{a}'dy|\leq CH^{2}.
\]
and when $p>3$ 
\begin{align*}
\int_{y_{3}}^{y_{5}}\frac{(p-3)H^{2}(g_{a}-yg'_{a})}{(y^{2}+g_{a}(y)^{2})^{\frac{3}{2}}\sqrt{1+g_{a}'^{2}}}g_{a}g_{a}'dy & \leq-(p-3)H^{2}(\frac{1}{6}+CH)\\
 & \leq-\frac{(p-3)}{7}H^{2},
\end{align*}
\begin{align*}
|\int_{y_{3}}^{y_{5}}\frac{(\phi-p)H^{2}(g_{a}-yg_{a}')}{(y^{2}+g_{a}(y)^{2})^{\frac{3}{2}}\sqrt{1+g_{a}'^{2}}}g_{a}g_{a}'dy| & \leq C|p-1|H^{2}\lambda^{2},
\end{align*}
From (\ref{phi' estimate})
\[
|\int_{y_{3}}^{y_{5}}(-\frac{H^{2}\phi'(y+g_{a}g'_{a})}{8d^{2}y^{2}\sqrt{1+g_{a}'^{2}}}+C(p,\lambda)O(H^{3}))g_{a}g_{a}'dy|\leq C|p-1|\lambda H^{2}+C(p,\lambda)\lambda^{2}H^{3}.
\]

Also note that 
\begin{align*}
|\int_{y_{1}}^{y_{3}}g_{a}g_{a}'\rho dy| & =|\int_{g_{a}(y_{1})}^{g_{a}(y_{2})}+\int_{g_{a}(y_{2})}^{g_{a}(y_{3})}\frac{1}{2}\rho dg_{a}^{2}|\\
 & \leq CH^{2}.
\end{align*}
So we can fix $\lambda>0$ small and $p>0$ large, and there is $\delta(\lambda,p)>0$
small that when $0<H<\delta(\lambda,p)$ 
\begin{align}
-CpH^{2}\leq\tau(g_{a}(y_{5}),g_{a}'(y_{5}))-\tau(g_{a}(y_{1}),g_{a}'(y_{1})) & =\int_{y_{1}}^{y_{5}}\rho g_{a}g_{a}'dy\leq-H^{2},\nonumber \\
-CpH^{2}\leq\tau(g_{a}(y_{5}),g_{a}'(y_{5}))-\tau(g_{a}(y_{3}),g_{a}'(y_{3})) & =\int_{y_{3}}^{y_{5}}\rho g_{a}g_{a}'dy\leq-H^{2}.\label{tau(y3) and tau(y5)}
\end{align}
From $\tau(g_{a}(y_{i}),g_{a}'(y_{i}))>0,i=1,3,5$, we know 
\[
\tau(g_{a}(y_{i}),g_{a}'(y_{i}))\geq H^{2},i=1,3.
\]

Note that $k_{a}(y_{i})=\frac{1}{2\pm O(H)},i=1,3,5.$ So 
\begin{align*}
g_{a}'(y_{i}) & =\frac{-\sqrt{\tau-4\tau^{2}}}{2\tau}=-\frac{1}{2\sqrt{\tau}}(1+O(\tau)),i=1,5,\\
g_{a}'(y_{3}) & =\frac{1}{2\sqrt{\tau}}(1+O(\tau)).
\end{align*}
So we have
\begin{align*}
0>g_{a}'(y_{1}) & \geq-\frac{C}{H},\\
0<g_{a}'(y_{3}) & \leq\frac{C}{H},
\end{align*}
and 
\[
\lim_{a\rightarrow a'(H)^{-}}g_{a}'(y_{5})=-\infty.
\]

\end{proof}

Now we can analyze the singular limit of $g_{a}$ as $a\rightarrow a'(H)^{-}.$

\begin{lem}\label{estimates at singular point} Choose $p,\lambda,\delta(p,\lambda),\delta_{1}(H)$
such that the conclusion of Lemma \ref{key lemma} holds. When $a\rightarrow a'(H)^{-}$,
by passing to a sequence, we have $y_{i}\rightarrow y_{i}^{*},i=1,\cdots,5$
and $g_{a}$ converges to $g_{a'(H)}$ in $C^{2}$ sense on every
$[0,\tilde{y}]\subset[0,y_{5}^{*})$. (H2) happens to $g_{a'(H)}(y)$
and 
\begin{align*}
\lim_{y\rightarrow y_{5}^{*-}}g_{a'(H)}(y) & =0,\\
\lim_{y\rightarrow y_{5}^{*-}}g_{a'(H)}'(y) & =-\infty.
\end{align*}

\end{lem}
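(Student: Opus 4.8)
The plan is to combine the compactness argument of Lemma \ref{non compact}, which applies as long as the first derivatives stay bounded, with the quantitative input $g_{a}'(y_{5})\to-\infty$ coming from Lemma \ref{key lemma}.

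\emph{Step 1: extract a limit on $[0,y_{4}^{*}]$.} Fix $p,\lambda$ and the smallness thresholds so that Lemma \ref{key lemma} holds and take any sequence $a_{n}\to a'(H)^{-}$; since $a_{n}\in E(H)$, (H1) holds for $g_{a_{n}}$, so $y_{1}^{(n)}<\dots<y_{5}^{(n)}$ are all defined. Lemma \ref{key lemma} gives $\liminf_{a\to a'(H)^{-}}g_{a}'(y_{1})>-\infty$, hence $|g_{a}'(y_{1})|\le M(H)$ for $a$ near $a'(H)^{-}$; as $g_{a}''(y_{1})=0$ forces $k_{a}(y_{1})=\tfrac{1}{2+\rho(y_{1})}$, the radius $g_{a}(y_{1})=k_{a}(y_{1})(1+g_{a}'(y_{1})^{2})^{-1/2}$ is bounded below away from $0$, uniformly in $a$. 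Feeding this through Lemmas \ref{y1 implies y2}, \ref{y2 to y3}, \ref{y3 to y4}, \ref{tau(y4) estimate} (and using $\tau(g_{a}(y_{3}))\approx\tau(g_{a}(y_{1}))\approx g_{a}(y_{1})^{2}$ to bound $g_{a}'(y_{3})$) keeps $(g_{a},g_{a}')$ on $[0,y_{4}]$ inside a domain $D(A_{1}(H),A_{2}(H),A_{3}(H),A_{4}(H))$ with the $A_{i}$ independent of $a$, while $y_{4}\le 3$ by Lemma \ref{y4 estimate}. Exactly as in Lemma \ref{non compact} this yields uniform $C^{2}$-bounds — and, after differentiating (\ref{ODE with initial value}) once and invoking (\ref{rho'}), uniform $C^{3}$-bounds — for $g_{a}$ on $[0,y_{4}]$, together with $C(H)^{-1}\le y_{i}-y_{i-1}\le C(H)$ as in (\ref{yi-y(i-1) estimates}); since $g_{a}(y_{4})\in[0.9,1.1]$, these bounds persist on $[0,y_{4}+\eta_{0}]$ for a fixed $\eta_{0}(H)>0$. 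Passing to a subsequence, $y_{i}^{(n)}\to y_{i}^{*}$ for $i\le 4$ and $g_{a_{n}}\to\tilde{g}$ in $C^{2}$ on $[0,y_{4}^{*}+\eta_{0}/2]$; by uniqueness for (\ref{ODE with initial value}), $\tilde{g}=g_{a'(H)}$, which therefore inherits the monotonicity on each $[y_{i-1}^{*},y_{i}^{*}]$, with $g_{a'(H)}'(y_{2}^{*})=0$, $0<g_{a'(H)}(y_{2}^{*})<\tfrac12$, $g_{a'(H)}'(y_{4}^{*})=0$, $g_{a'(H)}(y_{4}^{*})\in[0.9,1.1]$, $g_{a'(H)}''(y_{4}^{*})<0$. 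A standard ODE continuation upgrades this to $C^{2}$-convergence on every compact subinterval of the maximal existence interval of $g_{a'(H)}$.

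\emph{Step 2: (H2) occurs and $y_{5}^{(n)}\to y_{5}^{*}$.} Since $a'(H)\in\bar{E}(H)$, (H1) fails for $g_{a'(H)}$; and (H3) cannot occur, because (H3) forbids $g_{a'(H)}$ from ever reaching a point with $g\in(0,\tfrac12)$, $g'=0$, whereas $y_{2}^{*}$ is such a point. Hence (H2) occurs, and Lemma \ref{y4 to y5} (its (H2) branch) extends $g_{a'(H)}$ to some $y_{5}^{*}>y_{4}^{*}$ with $g_{a'(H)},g_{a'(H)}'$ decreasing on $(y_{4}^{*},y_{5}^{*})$, $L:=\lim_{y\to y_{5}^{*-}}g_{a'(H)}(y)\ge0$ and $\lim_{y\to y_{5}^{*-}}g_{a'(H)}'(y)=-\infty$. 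For the $g_{a_{n}}$, the inflection point $y_{5}^{(n)}$ (where $g_{a_{n}}''=0$, $g_{a_{n}}'<0$ past $y_{4}^{(n)}$) exists by Lemma \ref{y4 to y5} ((H1) branch) and stays bounded by Lemma \ref{y4 estimate}; there $k_{a_{n}}(y_{5}^{(n)})\in[\tfrac{1}{2+\hat{C}H},\tfrac{1}{2-\hat{C}H}]$ while $g_{a_{n}}'(y_{5}^{(n)})\to-\infty$ by Lemma \ref{key lemma}, so $g_{a_{n}}(y_{5}^{(n)})\to0$ and $\tau(g_{a_{n}}(y_{5}^{(n)}),g_{a_{n}}'(y_{5}^{(n)}))\to0$. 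A short argument then forces $y_{5}^{(n)}\to y_{5}^{*}$: a limit strictly below $y_{5}^{*}$ would, by $C^{1}$-convergence, push $g_{a_{n}}(y_{5}^{(n)})$ to a positive value, while a limit strictly above $y_{5}^{*}$ would keep $g_{a_{n}}'$ bounded near $y_{5}^{*}$, contradicting the blow-up of $g_{a'(H)}'$.

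\emph{Step 3: $L=0$, and the expected main obstacle.} This is the only genuinely new point: Lemma \ref{y4 to y5} by itself only gives $L\ge0$. On $(y_{4}^{(n)},y_{5}^{(n)})$ one has $\tfrac{d}{dy}(\tfrac12 g_{a_{n}}^{2})=g_{a_{n}}g_{a_{n}}'=-g_{a_{n}}|g_{a_{n}}'|$, so from $\tfrac{d\tau}{dy}=g_{a_{n}}g_{a_{n}}'\rho$ and $|\rho|\le\hat{C}H$, for every fixed $y\in(y_{4}^{*},y_{5}^{*})$ (which lies in $(y_{4}^{(n)},y_{5}^{(n)})$ once $n$ is large),
\[
\bigl|\,\tau(g_{a_{n}}(y_{5}^{(n)}),g_{a_{n}}'(y_{5}^{(n)}))-\tau(g_{a_{n}}(y),g_{a_{n}}'(y))\,\bigr|\le\frac{\hat{C}H}{2}\,g_{a_{n}}(y)^{2}.
\]
Letting $n\to\infty$, using $C^{1}$-convergence on $[y_{4}^{*},y]$ together with $\tau(g_{a_{n}}(y_{5}^{(n)}),g_{a_{n}}'(y_{5}^{(n)}))\to0$, gives $|\tau(g_{a'(H)}(y),g_{a'(H)}'(y))|\le\tfrac{\hat{C}H}{2}g_{a'(H)}(y)^{2}$ on $(y_{4}^{*},y_{5}^{*})$. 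Now let $y\to y_{5}^{*-}$: since $g_{a'(H)}\to L$ and $g_{a'(H)}'\to-\infty$ there, $\tau(g_{a'(H)}(y),g_{a'(H)}'(y))\to-L^{2}$, so $L^{2}\le\tfrac{\hat{C}H}{2}L^{2}$, which for $H$ small forces $L=0$; hence $\lim_{y\to y_{5}^{*-}}g_{a'(H)}(y)=0$ and $\lim_{y\to y_{5}^{*-}}g_{a'(H)}'(y)=-\infty$. The hard part is exactly this: one must convert the \emph{quantitative} statement $g_{a}'(y_{5})\to-\infty$ of Lemma \ref{key lemma} (which makes $\tau$ vanish at the endpoints $y_{5}^{(n)}$) into the \emph{exact} value $L=0$ for the limit, all while the $C^{2}$-convergence $g_{a_{n}}\to g_{a'(H)}$ degenerates precisely at $y_{5}^{*}$ — which is why the comparison above is carried out at interior points $y<y_{5}^{*}$ before passing to the limit, and why $y_{5}^{*}$ has to be identified both as $\lim_{n}y_{5}^{(n)}$ and as the blow-up time of $g_{a'(H)}$.
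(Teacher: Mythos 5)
Your argument is correct, but it is organized rather differently from the paper's proof. The paper never applies the (H2) branch of Lemma \ref{y4 to y5} to the limit solution; instead it exhausts the approach to the singularity by the level sets $y_{5}^{j}$ where $g_{a}'(y_{5}^{j})=-j$, gets uniform $C^{3}$ bounds on $[0,y_{5}^{j}]$, passes to a diagonal limit, shows $y_{5}^{j*}\uparrow y_{5}^{*}=\lim y_{5}$ via $|y_{5}^{j}-y_{5}|\leq C/j$, and then reads both limits directly off the approximants: $g_{a'(H)}'(y_{5}^{j*})=-j$, and $g_{a'(H)}(y_{5}^{j*})\leq C/j$ from the bound $k_{a}\leq\frac{1}{1-CH}$ on $[y_{4},y_{5}]$. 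You instead first build the limit up to $y_{4}^{*}$ by continuous dependence, deduce (H2) for $g_{a'(H)}$ from the trichotomy (using $a'(H)\in\bar{E}(H)$ and the point $y_{2}^{*}$), let Lemma \ref{y4 to y5} produce the blow-up time $y_{5}^{*}$ of the limit itself, identify $\lim_{n}y_{5}^{(n)}=y_{5}^{*}$ by a separate squeeze argument, and finally force $L=\lim_{y\rightarrow y_{5}^{*-}}g_{a'(H)}=0$ through the $\tau$-comparison $|\tau|\leq\frac{\hat{C}H}{2}g^{2}$ near the endpoint, so that $L^{2}\leq\frac{\hat{C}H}{2}L^{2}$. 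Both routes hinge on the same input from Lemma \ref{key lemma} ($g_{a}'(y_{5})\rightarrow-\infty$ while $g_{a}'(y_{1})$ stays bounded), and the compactness machinery is that of Lemma \ref{non compact}. What the paper's level-set exhaustion buys is that the two delicate points you treat as separate steps (matching $\lim_{n}y_{5}^{(n)}$ with the blow-up time, and proving $L=0$) drop out automatically from the estimates at the points $y_{5}^{j}$; what your version buys is a cleaner separation between the limit ODE analysis and the convergence statement, and your $L=0$ step could even be shortened by passing the bound $k_{a}\leq\frac{1}{1-CH}$ on $[y_{4},y_{5}]$ to the limit (so $g=k/\sqrt{1+g'^{2}}\rightarrow0$ as $g'\rightarrow-\infty$), which is essentially the paper's mechanism, in place of the $\tau$ argument.
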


\begin{proof}

From 
\[
\lim_{a\rightarrow a'(H)^{-}}g_{a}'(y_{5})=-\infty
\]
and $g_{a}'$ is monotonically decreasing on $[y_{4},y_{5}]$ we know
for any $j\in\mathbb{N}^{+}$ when $a$ is sufficiently close to $a'(H)$,
there is exactly one $y_{5}^{j}\in(y_{4},y_{5})$ such that $g_{a}'(y_{5}^{j})=-j.$
From 
\[
-\frac{C}{H}\leq g_{a}'(y_{1})<0<g_{a}'(y_{3})\leq\frac{C}{H}
\]
we know that $g_{a}'$ is bounded independent of $a$ on $[0,y_{5}^{j}]$.
From the analysis of the last section, it is not hard to prove that
$g_{a}$ has both positive upper bound and positive lower bound on
$[0,y_{5}^{j}]$. So $(g_{a},g_{a}')\in D(A_{1},A_{2},A_{3},A_{4}),$
where $A_{i}=A_{i}(H,j),i=1,\cdots,4.$ So on $[0,y_{5}^{j}]$, $g_{a}$
has uniform  $C^{3}$ bounds. By using the same argument as in Lemma
\ref{non compact}, we can prove that there is $C(p,\lambda,H)>0$
such that 
\[
C(p,\lambda,H)^{-1}\leq y_{i}-y_{i-1}\leq C(p,\lambda,H),i=1,\cdots,4.
\]
And from Lemma \ref{beta-alpha estimate}, we can prove that $C^{-1}<y_{5}^{j}-y_{4}<C.$
So by passing to a sequence, we can assume $y_{i}\rightarrow y_{i}^{*},i=1,\cdots,4,y_{5}^{j}\rightarrow y_{5}^{j*}.$
And $g_{a}$ converges to $g_{a'(H)}$ in $C^{2}([0,y_{5}^{j*}])$
sense, where $g_{a'(H)}$ solves (\ref{ODE with initial value}) with
initial value $a'(H)$. By a diagonal method, we can choose $a_{i}\rightarrow a'(H)^{-}$
such that for any $j\in\mathbb{N}^{+}$ 
\[
\lim_{i\rightarrow+\infty}y_{5}^{j}=y_{5}^{j*}.
\]
It is obvious that $y_{5}^{j*}$ is monotonically increasing in $j$.
Note that we have 
\begin{align*}
y_{5}^{j+1}-y_{5}^{j} & =\int_{-(j+1)}^{-j}\frac{g_{a}dg_{a}'}{(1+g_{a}'^{2})((2+\rho)k_{a}-1)}.
\end{align*}
For any $j>0$, when $-j-1<g_{a}'<-j$, we can choose $a$ close to
$a'(H)$ such that $k_{a}$ is bounded away from (and bigger than)
$\frac{1}{2}$ . So there is $C_{j}>0$ such that 
\[
y_{5}^{j+1}-y_{5}^{j}\geq C_{j}.
\]
So we know $|y_{5}^{j+1*}-y_{5}^{j*}|\geq C_{j}.$ From the fact that
the second derivative of $g_{a}$ has uniform bound in $[0,y_{5}^{j}]$,
we know
\[
g_{a'(H)}'(y_{5}^{j*})=\lim_{a\rightarrow a'(H)^{-}}g_{a}'(y_{5}^{j*})=\lim_{a\rightarrow a'(H)^{-}}g_{a}'(y_{5}^{j})=-j.
\]
So we know 
\[
\lim_{j\rightarrow+\infty}g_{a'(H)}^{'}(y_{5}^{j*})=-\infty.
\]

Note that 
\[
|y_{5}^{j}-y_{5}|\leq\frac{C}{j}.
\]
If we take a subsequence further, we have $y_{5}\rightarrow y_{5}^{*}.$
So we have $|y_{5}^{j*}-y_{5}^{*}|\leq\frac{C}{j}.$ So we have 
\[
\lim_{j\rightarrow+\infty}y_{5}^{j*}=y_{5}^{*}.
\]
From the monotonicity of $g_{a}'$ we know 
\[
\lim_{y\rightarrow y_{5}^{*}}g'_{a'(H)}(y)=-\infty.
\]

For $k_{a}=g_{a}\sqrt{1+g_{a}'^{2}}$, by analyzing (\ref{Ka and ga}),
we know when $y\in[y_{4},y_{5}]$, 
\begin{equation}
\frac{1}{2+CH}\le k_{a}\leq\frac{1}{1-CH}.\label{ka estimate on =00005By4,y5=00005D}
\end{equation}
So we have 
\[
0<g_{a}(y_{5}^{j})\leq\frac{C}{j}
\]
hence $g_{a'(H)}(y_{5}^{j*})\leq\frac{C}{j}$ and 
\[
\lim_{y\rightarrow y_{5}^{*}}g_{a'(H)}(y)=0.
\]

At last, noticing that any $[0,\tilde{y}]\subset[0,y_{5}^{*})$ is
contained in some $[0,y_{5}^{j})$ for any $a$ close to $a'(H).$
So we can prove the uniform convergence on $[0,\tilde{y}].$ Obviously,
(H2) happens to $g_{a'(H)}$.

\end{proof}

Here we draw a graph of $g_{a'(H)}(y),y\in[0,y_{5}^{*}).$

\includegraphics[scale=0.4]{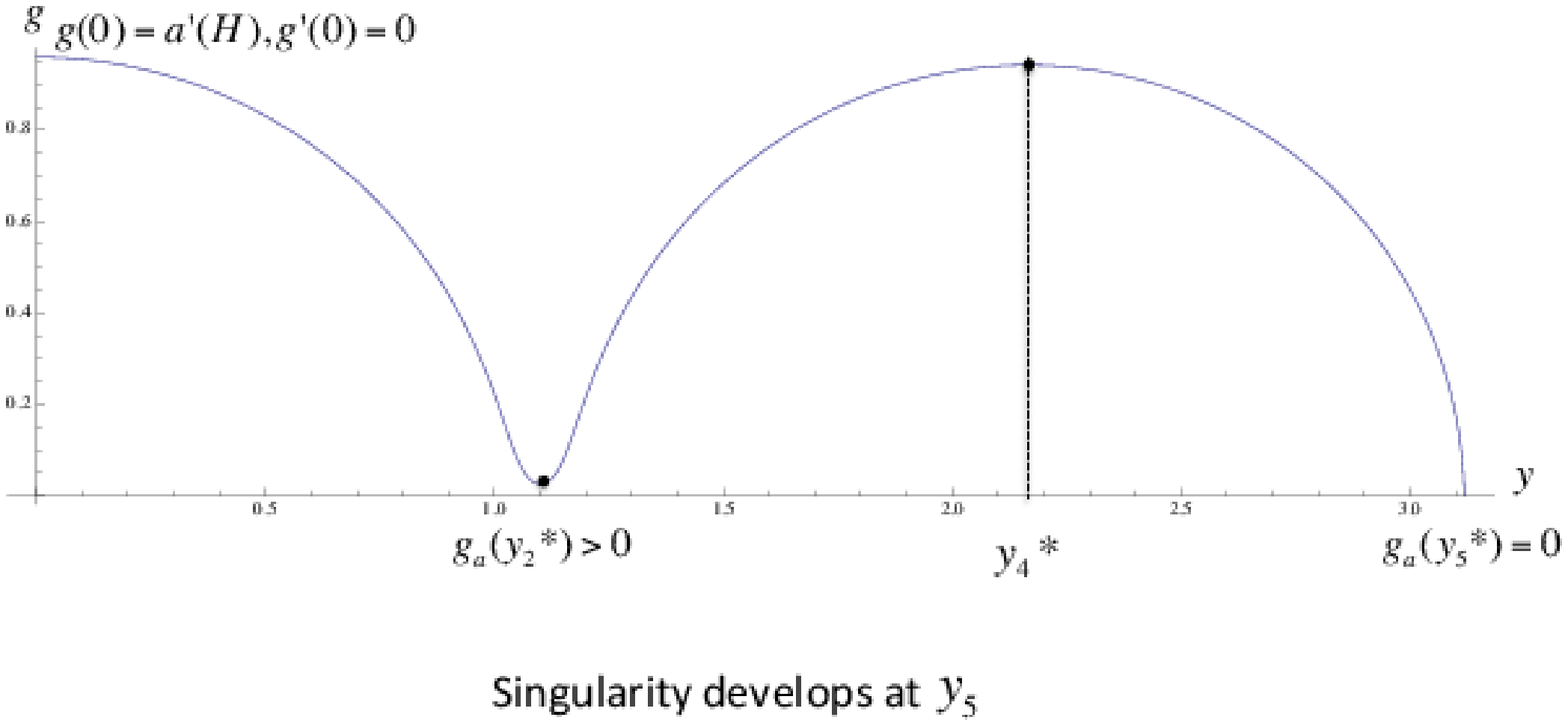}

$g_{a'(H)}(y)$ is monotonically decreasing in $[y_{4}^{*},y_{5}^{*})$.
Now we use $g$ to represent $g_{a'(H)}$ for short. Suppose $y=y(g),g\in(0,g_{a'(H)}(y_{4}^{*})]$
is the inverse function of $g_{a'(H)}(y),y\in[y_{4}^{*},y_{5}^{*}).$
We make even extension to $y(g)$. 
\[
y^{even}(g)=\begin{cases}
y(g), & g\in(0,g_{a'(H)}(y_{4}^{*})],\\
y_{5}^{*}, & g=0,\\
y(-g), & g\in[-g_{a'(H)}(y_{4}^{*}),0).
\end{cases}
\]
Then we have

\begin{lem}\label{regularity at singular point}$y^{even}(g)$ has
continuous second derivative in $(-\delta,\delta)$ for small $\delta.$

\end{lem}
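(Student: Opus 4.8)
The plan is to work entirely on the last arc $y\in[y_{4}^{*},y_{5}^{*})$, on which $g:=g_{a'(H)}$ is a smooth solution of (\ref{ODE with initial value}) and, by Lemma \ref{y4 to y5}, strictly decreasing (since $g'(y_{4}^{*})=0$ and both $g,g'$ are decreasing, so $g'<0$ on $(y_{4}^{*},y_{5}^{*})$). Hence its inverse $y=y(g)$ is smooth on $(0,b)$ with $b=g(y_{4}^{*})$, and $y(g)\to y_{5}^{*}$ as $g\to0^{+}$; for any $\delta\in(0,b)$ the even extension $y^{even}$ is automatically smooth on $(-\delta,\delta)\setminus\{0\}$, so the whole issue is $C^{2}$-regularity at $0$. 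Since $(y^{even})'(g)=\operatorname{sgn}(g)\,y'(|g|)$ and, for $g\ne0$, $(y^{even})''(g)=y''(|g|)$, a standard argument (using $y(g)-y_{5}^{*}=\int_{0}^{g}y'$ and $y'(g)=\int_{0}^{g}y''$ once $y',y''$ extend continuously to $0$) reduces the lemma to showing
\[
\lim_{g\to0^{+}}y'(g)=0\qquad\text{and}\qquad\lim_{g\to0^{+}}y''(g)\ \text{exists and is finite}.
\]

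For the first limit, $y'(g)=1/g'(y(g))$, and by Lemma \ref{estimates at singular point} (case (H2) for $g_{a'(H)}$) $g'(y)\to-\infty$ as $y\to y_{5}^{*-}$, so $y'(g)\to0^{-}$. For the second, differentiating $y'=1/g'$ gives $y''(g)=-g''(y(g))/g'(y(g))^{3}$; substituting the ODE (\ref{ODE with initial value}) for $g''$ and writing $v=|g'|=-g'$, $k=g\sqrt{1+g'^{2}}$ (so that $gv=\sqrt{k^{2}-g^{2}}$) I get the closed expression
\[
y''(g)=\frac{1}{\sqrt{k^{2}-g^{2}}}\Bigl(1+\frac{1}{v^{2}}\Bigr)-(2+\rho)\Bigl(1+\frac{1}{v^{2}}\Bigr)^{3/2}.
\]
As $g\to0^{+}$ we have $v\to+\infty$, so it remains only to show that $\rho$ and $k$ tend to finite limits $\rho_{*},k_{*}$ with $k_{*}>0$; then $y''(g)\to\frac{1}{k_{*}}-(2+\rho_{*})$, which is finite.

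To obtain these limits I would use the bound $\frac{1}{2+CH}\le k\le\frac{1}{1-CH}$ on $[y_{4},y_{5}]$ from the proof of Lemma \ref{estimates at singular point}. With $k$ thus bounded above and below, and with $g\to0$, $g'\to-\infty$, $d=\sqrt{y^{2}+g^{2}}\to y_{5}^{*}>0$ (Lemma \ref{y4 estimate}), every term of (\ref{rho-original}) has a limit: $\frac{g-yg'}{\sqrt{1+g'^{2}}}\to y_{5}^{*}$, $\frac{y+gg'}{y^{2}\sqrt{1+g'^{2}}}\to0$, all denominators tend to $1+O(H)$, and $\phi(|g/y|)=1$, $\phi'(|g/y|)=0$ once $|g/y|<\lambda$; hence $\rho\to\rho_{*}=\frac{H}{y_{5}^{*}}+\frac{H}{(y_{5}^{*})^{2}}-\frac{H^{2}}{2(y_{5}^{*})^{2}}+C(p,\lambda)O(H^{3})$, a finite constant. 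For $k$: since $\tau=-g^{2}+\frac{g^{2}}{k}$ and $k$ is bounded, $\tau\to0$ as $g\to0$; by (\ref{tau w.r.t. g^2}), $\frac{d\tau}{d(g^{2})}=\frac{\rho}{2}$, so integrating up from $g^{2}=0$ (where $\tau=0$) gives $\tau=\int_{0}^{g^{2}}\frac{\rho}{2}\,ds=\frac{\rho_{*}}{2}g^{2}+o(g^{2})$, whence $\frac{1}{k}=1+\frac{\tau}{g^{2}}\to1+\frac{\rho_{*}}{2}$, i.e. $k\to k_{*}=\frac{2}{2+\rho_{*}}>0$. Consequently $\lim_{g\to0^{+}}y''(g)=\frac{1}{k_{*}}-(2+\rho_{*})=-\frac{2+\rho_{*}}{2}$, matching the geometric picture that the meridian meets the axis like a circular arc of radius $k_{*}$, and the lemma follows.

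I expect the one delicate step to be the convergence of $k$: the a priori estimate only traps $k$ in $[\frac{1}{2+CH},\frac{1}{1-CH}]$ and does not by itself produce a limit; the resolution is to first observe that $\tau\to0$ at the singularity and then use the exact identity $\frac{d\tau}{d(g^{2})}=\frac{\rho}{2}$ to extract the precise rate $\tau\sim\frac{\rho_{*}}{2}g^{2}$. The reduction to the two limits of $y',y''$ and the computation of $\lim\rho$ are routine given the estimates of Section 4.
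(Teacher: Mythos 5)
Your proposal is correct, and its skeleton coincides with the paper's: reduce the claim to the two limits $\lim_{g\to0^{+}}1/g'=0$ and $\lim_{g\to0^{+}}(-g''/g'^{3})$ finite, note that the first follows from $g'\to-\infty$, and observe that the only delicate point is the convergence of $gg'$ (equivalently of $k_a$) together with the existence of $\lim\rho$, the latter being read off from (\ref{rho-original}) exactly as you do ($d\to y_{5}^{*}$, $\frac{g-yg'}{\sqrt{1+g'^{2}}}\to y_{5}^{*}$, $\phi\equiv1$, $\phi'\equiv0$ once $|g/y|<\lambda$). Where you genuinely diverge is in how that delicate limit is obtained: the paper rewrites the equation as a first-order linear ODE for $1/g'$ in the variable $g$, solves it with an integrating factor, and applies L'H\^opital to get $\lim 1/(gg')$ in terms of $\lim\rho$; you instead exploit the Delaunay parameter, using $\tau=g^{2}(k_a^{-1}-1)\to0$ at the singular point and the exact identity (\ref{tau w.r.t. g^2}) to conclude $\tau/g^{2}\to\rho_{*}/2$, hence $k_a\to2/(2+\rho_{*})$. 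The two mechanisms compute the same average of $\rho$ near the endpoint, but yours reuses the $\tau$-machinery already central to Section 4, avoids the integrating-factor computation, and keeps the signs transparent (the paper's displayed statements $G_{2}\to1$, $g'\sim1/g$ and $\lim1/(gg')=1+\frac{1}{2}\lim\rho$ contain sign slips, since $g'<0$ near the singularity, though they do not affect the conclusion); your explicit limiting value $-\frac{2+\rho_{*}}{2}$ for the second derivative is what (\ref{y(even)-second}) yields once those signs are corrected. Your spelled-out reduction of the even extension's $C^{2}$ regularity at $0$ to the two one-sided limits is also a welcome piece of bookkeeping that the paper leaves implicit.
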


\begin{proof} $y^{even}(g)$ is continuous at $0.$ When $g>0,$
\[
\frac{dy^{even}}{dg}=\frac{1}{g'}.
\]
From (\ref{ka estimate on =00005By4,y5=00005D}), we know when $g$
is close to $0$, $g'\sim\frac{1}{g}.$ So we have 
\[
\lim_{g\rightarrow0^{+}}\frac{dy^{even}}{dg}=0.
\]
So $y^{even}(g)$ has $0$ derivative at $g=0$. For the second derivative,
we note that 
\begin{align}
\frac{d^{2}y^{even}}{dg^{2}} & =\frac{d}{dg}(\frac{1}{g'})=\frac{-g''}{g'^{3}}\nonumber \\
 & =-\frac{1+g'^{2}}{g'^{2}}\frac{1}{g}\frac{1}{g'}+(2+\rho)\frac{(1+g'^{2})^{\frac{3}{2}}}{g'^{3}}.\label{y(even)-second}
\end{align}
Regard $\frac{1+g'^{2}}{g'^{2}}$ as $G_{1}(g)$ and $\frac{(1+g'^{2})^{\frac{3}{2}}}{g'^{3}}$
as $G_{2}(g)$. We get a linear ODE of first order,
\[
\begin{cases}
\frac{d}{dg}(\frac{1}{g'})+\frac{G_{1}(g)}{g}\frac{1}{g'} & =(2+\rho)G_{2}(g),\\
\frac{1}{g'}(0)=0,
\end{cases}
\]
with 
\begin{align*}
\lim_{g\rightarrow0^{+}}G_{1}(g) & =1,\\
G_{1}(g) & =1+O(g^{2}),\\
\lim_{g\rightarrow0^{+}}G_{2}(g) & =1.
\end{align*}
The solution is 
\[
\frac{1}{g'}=\frac{\int_{0}^{g}(2+\rho)G_{2}(s)s\exp(O(s^{2}))ds}{g\exp(O(g^{2}))}.
\]
So 
\begin{align*}
\lim_{g\rightarrow0^{+}}\frac{1}{gg'} & =\lim_{g\rightarrow0^{+}}\frac{\int_{0}^{g}(2+\rho)G_{2}(s)s\exp(O(s^{2}))ds}{g^{2}\exp(O(g^{2}))}\\
 & =\lim_{g\rightarrow0^{+}}\frac{(2+\rho)G_{2}(g)g\exp(O(g^{2}))}{2g\exp(O(g^{2}))+g^{2}\exp(O(g^{2}))O(g)}\\
 & =1+\frac{1}{2}\lim_{g\rightarrow0^{+}}\rho.
\end{align*}
From (\ref{rho-original}) we know as $y\rightarrow y_{5}^{*},g_{a'(H)}\rightarrow0^{+},g_{a'(H)}'\rightarrow-\infty$,
we have $d\rightarrow y_{5}^{*},\frac{g_{a}-yg_{a}'}{\sqrt{1+g_{a}'^{2}}}\rightarrow y_{5}^{*}.$
So the limit 
\[
\lim_{g\rightarrow0^{+}}\rho
\]
exists. So from (\ref{y(even)-second}), $\frac{d^{2}y^{even}}{dg^{2}}(0)$
exists. 

\end{proof}

\section{Proof of the main theorems}

Now we prove Theorem \ref{thm 1}. For any $H$ sufficiently small,
we have got $g_{a'(H)}(y)$, where we have assume that $g_{a'(H)}(y_{5}^{*})=0$.
Consider the surfaces of revolution $\Sigma^{+}(H)$ defined by 
\[
r=\frac{2}{H}g_{a'(H)}(\frac{H}{2}x),x\in[0,\frac{2y_{5}^{*}}{H}].
\]
 From our construction, when $x\in[0,\frac{2y_{5}^{*}}{H})$ the surface
has constant mean curvature $H$. When $x=\frac{2y_{5}^{*}}{H}$,
from Lemma \ref{regularity at singular point}, the surface actually
has well defined and continuous mean curvature at this single point.
So the mean curvature at this point must be $H.$ Let $\Sigma^{-}(H)$
be defined by 
\[
r=\frac{2}{H}g_{a'(H)}(-\frac{H}{2}x),x\in[-\frac{2y_{5}^{*}}{H},0).
\]
And let $\Sigma(H)=\Sigma^{+}(H)\cup\Sigma^{-}(H).$ Then $\Sigma(H)$
has constant mean curvature $H$ globally. It is obvious that $\Sigma(H)$
has sphere topology and it is embedded.

From Lemma \ref{d lower bound}, we know $C^{-1}H^{-1}\leq l_{0}\leq CH^{-1}.$
So it follows that for any compact set $K$, as long as $H$ is sufficiently
small, $\Sigma(H)$ separates $K$ from infinity. 

Now we calculate $|\Sigma(H)|$. Let $|\Sigma(H)|_{e}$ denote the
area in Euclidean metric. We know that 
\[
|\frac{|\Sigma(H)|_{e}}{|\Sigma(H)|}-1|\leq Cl_{0}^{-1}\leq CH.
\]
So we need only to prove that $|H^{2}|\Sigma(H)|_{e}-48\pi|\leq C(p)H.$
If we denote $g_{a'(H)}$ as $g$ for short, we have 
\[
H^{2}|\Sigma(H)|_{e}=16\pi\int_{0}^{y_{5}^{*}}g\sqrt{1+g'^{2}}dy.
\]
First we consider the integral on $[y_{2}^{*},y_{4}^{*}].$ From (\ref{tau(y3) and tau(y5)})
we know, 
\[
H^{2}\leq\tau(g(y_{3}^{*}),g'(y_{3}^{*}))\leq CpH^{2}
\]
Then for some $C(p)>0$, $CH\leq g(y_{3}^{*})\leq C(p)H,C(p)H^{-1}\leq g'(y_{3}^{*})\leq CH^{-1}$.
There is $y_{6}^{*}\in[y_{3}^{*},y_{4}^{*}]$ such that $g'(y_{6}^{*})=1$.
We have 
\begin{align*}
|\int_{y_{6}^{*}}^{y_{4}^{*}}g\sqrt{1+g'^{2}}dy-\int_{\Phi_{1}(y_{6}^{*})}^{y_{4}^{*}}h(s)\sqrt{1+h'^{2}(s)}ds| & \leq C(p)H,\\
|\int_{\Phi_{2}(y_{6}^{*})}^{\Phi_{1}(y_{6}^{*})}h(s)\sqrt{1+h'^{2}(s)}ds| & \leq C(p)H,\\
|\int_{g(y_{3}^{*})}^{g(y_{6}^{*})}g\frac{\sqrt{1+g'^{2}}}{g'}dg-\int_{g(y_{3}^{*})}^{g(y_{6}^{*})}h\frac{\sqrt{1+h'^{2}}}{h'}dh| & \leq C(p)H,\\
|\int_{y_{2}^{*}}^{y_{3}^{*}}g\sqrt{1+g'^{2}}dy|,|\int_{y_{4}^{*}-1}^{\Phi_{2}(y_{3}^{*})}h\sqrt{1+h'^{2}}dy| & \leq C(p)H,
\end{align*}
where the third one holds because 
\[
|g(\frac{\sqrt{1+g'^{2}}}{g'}-\frac{\sqrt{1+h'^{2}}}{h'})|\leq\frac{C(p)H^{2}}{g}+C(p)gH
\]
and the fourth one holds because $y_{3}^{*}-y_{2}^{*}\leq C(p)H.$ 

By direct calculation we know 
\[
\int_{y_{4}^{*}-1}^{y_{4}^{*}}h\sqrt{1+h'^{2}}dy=1.
\]
So gathering all the inequalities we have 
\[
|\int_{y_{2}^{*}}^{y_{4}^{*}}g\sqrt{1+g'^{2}}dy-1|\leq C(p)H.
\]
In a similar way we can prove that 
\begin{align*}
|\int_{0}^{y_{2}^{*}}g\sqrt{1+g'^{2}}dy-1| & \leq C(p)H,\\
|\int_{y_{4}^{*}}^{y_{5}^{*}}g\sqrt{1+g'^{2}}-1| & \leq CH.
\end{align*}
So we know 
\[
|\int_{0}^{y_{5}^{*}}g\sqrt{1+g'^{2}}-3|\leq C(p)H.
\]
So 
\[
|H^{2}|\Sigma(H)|_{e}-48\pi|\leq C(p)H.
\]
However, if we revise Qing and Tian's proof of the uniqueness CMC
spheres in \cite{Qing-Tian-CMC}, we know for a stable CMC sphere
that separates the compact part from infinity and with $l_{0}$ large,
$H^{2}|\Sigma|$ should be close to $16\pi.$ So the CMC spheres we
constructed are unstable.

Now we prove Theorem \ref{thm 2}. Let's revise Lemma \ref{key lemma}.
Similarly we can choose $p<0$ small and $\delta(p)>0$ such that
when $0<H<\delta(p)$ and $-\delta_{1}(H)<a-a'(H)<0$, we have $|g_{a}'(y_{5})|<|g_{a}'(y_{1})|.$
So we can choose a proper $p=p(H)$ such that $|g_{a}'(y_{5})|=|g_{a}'(y_{1})|.$
In this case, we have 
\[
\lim_{a\rightarrow a'(H)^{-}}g_{a}'(y_{5})=\lim_{a\rightarrow a'(H)^{-}}g_{a}'(y_{1})=-\infty.
\]
Now we can analyze the behavior of $g_{a}$ on $y_{1}$ and $y_{3}$
by using the same method as used in Lemma \ref{estimates at singular point}.
We will get a singular limit of three spheres (each one is embedded).
They share the same axis and the central one meets its two neighbors
at two poles where $x_{2}=x_{3}=0$. By using the same method as used
in Lemma \ref{regularity at singular point}, we know all the three
spheres are smooth and have constant mean curvature $H.$ 

The problem is, for different $H$, we may get different $p(H)$ (which
is bounded independent of $H$). Note that for a particular $H>0$,
when we do all the constructions above, only the metric in a domain
\[
D(H)=\{(x_{1},x_{2},x_{3})|C^{-1}H^{-1}\leq\sqrt{x_{1}^{2}+x_{2}^{2}+x_{3}^{2}}\leq CH^{-1}\}
\]
 really matters. If we have constructed $\Sigma_{n}^{i},i=1,2,3$
for some $H_{n}>0$, we can choose a much larger scale to construct
$\Sigma_{n+1}^{i},i=1,2,3.$ We may assume that $D(H_{i}),D(H_{j})$
are disjoint for $i\neq j$. We choose $p$ as a smooth function of
$l=\sqrt{x_{1}^{2}+x_{2}^{2}+x_{3}^{2}}$ such that $p=p_{i}$ in
each $D(H_{i}).$ We can assume that $D(H_{i})$ is far away from
$D(H_{i+1})$ and $p'(l)$ is small enough such that the $\phi_{\lambda,p}$
term does not influence the mass. In this way, we get a smooth asymptotically
Schwarzschild metric with mass $1$. Then we can prove Theorem \ref{thm 2}.

\bibliographystyle{plain}
\bibliography{/Users/apple/Documents/Documents/Math/My-Papers/Mathbib}

\begin{thebibliography}{10}

\bibitem{CMC-Warped-Metric}
S.~Brendle.
\newblock Constant mean curvature surfaces in warped product manifolds.
\newblock {\em Publ. Math-Paris.}, 117(1):247--269, 2013.

\bibitem{OUTLYING-CMC}
S.~Brendle and M.~Eichmair.
\newblock Large outlying stable constant mean curvature spheres in initial data
  sets.
\newblock {\em Invent. Math.}, 197(3):663--682, 2014.

\bibitem{Effective-PMT}
A.~Carlotto, O.~Chodosh, and M.~Eichmair.
\newblock Effective versions of the positive mass theorem.
\newblock {\em arXiv:1503.05910v1}, 2015.

\bibitem{Huang-CMC}
L.H. Huang.
\newblock Foliations by stable spheres with constant mean curvature for
  isolated systems with general asymptotics.
\newblock {\em Comm. Math. Phys.}, 300(2):331--373, 2010.

\bibitem{Huisken-Yau}
G.~Huisken and S.T. Yau.
\newblock Definition of center of mass for isolated physical systems and unique
  foliations by stable spheres with constant mean curvature.
\newblock {\em Invent. Math.}, 124:281--311, 1996.

\bibitem{Shiguang-Ma-CMC}
S.~Ma.
\newblock Uniqueness of the foliation of constant mean curvature spheres in
  asymptotically flat 3-manifolds.
\newblock {\em Pac. J. Math.}, 252(1):145--179, 9 2011.

\bibitem{Shiguang-Ma-CMC2}
S.~Ma.
\newblock On the radius pinching estimate and uniqueness of the cmc foliation
  in asymptotically flat 3-manifolds.
\newblock {\em Adv. Math.}, 288:942--984, 2016.

\bibitem{Delaunay-CMC-Geodesic}
S.~Ma and F.~Pacard.
\newblock Constant mean curvature surfaces of delaunay type along a closed
  geodesic.
\newblock {\em arXiv:1412.4240}, 2014.

\bibitem{NERZ-CMC}
C.~Nerz.
\newblock Foliations by stable spheres with constant mean curvature for
  isolated systems without asymptotic symmetry.
\newblock {\em arxiv:1408.0752v2}.

\bibitem{Neves-Tian1}
A.~Neves and G.~Tian.
\newblock Existence and uniqueness of constant mean curvature foliation of
  asymptotically hyperbolic 3-manifolds.
\newblock {\em Geom. Funct. Anal.}, 19(3):910--942, 12 2009.

\bibitem{Neves-Tian2}
A.~Neves and G.~Tian.
\newblock Existence and uniqueness of constant mean curvature foliation of
  asymptotically hyperbolic 3-manifolds ii.
\newblock {\em J. Reine. Angew. Math.}, 641:69--93, 2010.

\bibitem{Qing-Tian-CMC}
J.~Qing and G.~Tian.
\newblock On the uniqueness of the foliation of spheres of constant mean
  curvature in asymptotically flat 3-manifolds.
\newblock {\em J. Am. Math. Soc.}, (4):1091--1110, 2007.

\end{thebibliography}

\end{document}